\theoremstyle{plain}
\newtheorem{theorem}{Theorem}[section]
\newtheorem{lemma}[theorem]{Lemma}
\newtheorem{proposition}[theorem]{Proposition}
\newtheorem{definition}[theorem]{Definition}
\newtheorem{replacements}{Replacements}
\newcommand{\abs}[1]{\left\lvert#1\right\rvert}
\DeclareMathOperator{\Dom}{dom}
\newcommand{\rest}[2]{#1\!\!\restriction_{#2}}
\newcommand{\reste}[2]{#1\restriction_{#2}}
\newcommand{\N}{\mathbb{N}}
\newcommand{\Z}{\mathbb{Z}}
\newcommand{\Q}{\mathbb{Q}}
\newcommand{\R}{\mathbb{R}}
\newcommand{\X}{\{0,1\}^*}
\DeclareMathOperator{\Pf}{Pf}
\newcommand{\noi}{\noindent}
\title{\textbf{
A Computational Complexity-Theoretic Elaboration of Weak Truth-Table Reducibility%
\thanks{
A preliminary version of this work was presented
under the title ``One-wayness and two-wayness in algorithmic randomness'',
at the 5th Conference on Logic, Computability and Randomness,
May 24-28, 2010, University of Notre Dame,
Notre Dame,
Indiana, USA.
}
}}
\author{
Kohtaro Tadaki\\
\\
Research and Development Initiative, Chuo University\\
JST CREST\\
1--13--27 Kasuga, Bunkyo-ku, Tokyo 112-8551, Japan\\
E-mail: tadaki@kc.chuo-u.ac.jp\\
http://www2.odn.ne.jp/tadaki/
}
\date{}
\begin{document}

\maketitle

\begin{quotation}
\noi\textbf{Abstract.}
The notion of weak truth-table reducibility plays an important role in recursion theory.
In this paper, we introduce an elaboration of this notion, where a computable bound on the use function is explicitly specified.
This elaboration enables us to deal with the notion of asymptotic behavior in a manner like in computational complexity theory,
while staying in computability theory.
We apply the elaboration to sets which appear in the statistical mechanical interpretation of algorithmic information theory.
We demonstrate the power of the elaboration by revealing a critical phenomenon, i.e., a phase transition, in the statistical mechanical interpretation,
which cannot be captured by the original notion of weak truth-table
reducibility.
\end{quotation}

\vspace{1mm}

\begin{quotation}
\noi\textit{Key words\/}:
algorithmic information theory,
algorithmic randomness,
weak truth-table reducibility,
Chaitin $\Omega$ number,
partition function,
halting problem,
phase transition,
statistical mechanics,
computational complexity theory,
program-size complexity
\end{quotation}

\section{Introduction}

The notion of weak truth-table reducibility plays an important role in recursion theory
(see e.g.~\cite{O89,N09,DH10}).
For any sets $A,B\subset\N$, we say that \textit{$A$ is weak truth-table reducible to $B$}, denoted $A\le_{wtt} B$,
if there exist an oracle Turing machine $M$ and a total recursive function $g\colon\N\to\N$ such that
$A$ is Turing reducible to $B$ via $M$ and, on every input $n\in\N$, $M$ only queries natural numbers at most $g(n)$.
In this paper, we introduce
an elaboration of
this notion,
where the total recursive bound $g$ on the use
of the reduction
is explicitly specified.
In doing so, in particular
we try to follow the fashion in which
computational complexity theory is developed,
while staying in computability theory.
We apply the elaboration to sets which appear in the theory of program-size, i.e., algorithmic information theory (AIT, for short) \cite{C87b,C02,N09,DH10}.
The elaboration,
called \textit{
reducibility
in query size $f$},
is introduced as follows.

\begin{definition}[
reducibility
in query size $f$]\label{new reduction}
Let $f\colon\N\to\N$, and let $A,B\subset\X$.
We say that
\textit{$A$ is reducible to $B$ in query size $f$}
if there exists an oracle deterministic Turing machine $M$ such that
\begin{enumerate}
  \item $A$ is Turing reducible to $B$ via $M$, and
  \item on every input $x\in\X$,
    $M$ only queries strings of length at most $f(\abs{x})$.\qed
\end{enumerate}
\end{definition}

For any fixed sets $A$ and $B$,
the above definition allows us
to consider
the notion of asymptotic behavior for the function $f$
which bounds
the use of the reduction,
i.e.,
which
imposes
the restriction on the use of
the computational resource
(i.e., the oracle $B$).
Thus,
by the above definition,
even in the context of computability theory,
we can deal with the notion of asymptotic behavior
in a manner like in computational complexity theory.
Recall here that
the notion of input size
plays a crucial role in computational complexity theory
since computational complexity such as time complexity and space complexity
is measured based on it.
This is also true in AIT
since the program-size complexity is measured based on input size.
Thus, in Definition~\ref{new reduction} we
consider a reduction between subsets of $\X$
and not a reduction between subsets of $\N$
as in the original weak truth-table
reducibility.
Moreover,
in Definition~\ref{new reduction}
we require the bound $f(\abs{x})$ to depend only on input size $\abs{x}$
as in computational complexity theory,
and not on input $x$ itself
as in the original weak truth-table reducibility.
We pursue
a formal
correspondence to computational complexity theory
in this manner,
while staying in computability theory.

In this paper we demonstrate the power of the notion of
reducibility
in query size $f$ in the context of AIT.
In \cite{C75} Chaitin introduced $\Omega$ number as a concrete example of random real.
His $\Omega$ is defined as the probability that an optimal prefix-free machine $U$ halts, and plays a central role in the development of AIT.
Here the notion of \textit{optimal prefix-free machine} is used to define the notion of \textit{program-size complexity} $H(s)$ for a finite binary string $s$.
The first $n$ bits of the base-two expansion of $\Omega$ solve the halting problem of the optimal prefix-free machine $U$ for all binary inputs of length at most $n$.
Using this property, Chaitin showed $\Omega$ to be a random real.
Let $\Dom U$ be the set of all halting inputs for $U$.
Calude and Nies \cite{CN97}, in essence, showed the following theorem on the relation between the base-two expansion of $\Omega$ and the halting problem $\Dom U$.

\begin{theorem}[Calude and Nies \cite{CN97}]\label{CN}
$\Omega$ and $\Dom U$ are weak truth-table equivalent.
Namely, $\Omega\le_{wtt}\Dom U$ and $\Dom U\le_{wtt}\Omega$.
\qed
\end{theorem}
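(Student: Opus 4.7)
The plan is to prove the two reductions $\Dom U \le_{wtt} \Omega$ and $\Omega \le_{wtt} \Dom U$ separately.

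\textbf{Proving $\Dom U \le_{wtt} \Omega$.} I would implement Chaitin's classical halting-probability argument. On input $x$ of length $n$, the oracle machine first reads the first $n+c$ bits of $\Omega$ (for a fixed constant $c$ depending on $U$), obtaining a dyadic rational $q$ with $q \le \Omega < q + 2^{-(n+c)}$. It then dovetails $U$ on every input, maintaining the partial halting mass $\sigma_t := \sum\{2^{-\abs{p}} : U(p)$ halts within $t$ steps$\}$, and halts as soon as $\sigma_t > q$. At that stage the residual mass $\Omega - \sigma_t$ is strictly less than $2^{-(n+c)} < 2^{-n}$, so every halting program of length at most $n$ must already have been enumerated; the machine then decides $x \in \Dom U$ by direct inspection of the discovered set. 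The query-use on $\Omega$ is $n + c$, a total recursive function of $\abs{x}$.

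\textbf{Proving $\Omega \le_{wtt} \Dom U$.} On input $n$, the plan is to query $\Dom U$ on every binary string of length at most $g(n)$, for a total recursive $g$ to be specified, compute the dyadic rational
\[
\Omega^{\le g(n)} := \sum\{\,2^{-\abs{p}} : p \in \Dom U,\ \abs{p} \le g(n)\,\},
\]
and emit its $n$-th binary digit. Correctness reduces to the claim that the first $n$ bits of $\Omega^{\le g(n)}$ agree with those of $\Omega$; since $\Omega$ is irrational, this holds once the tail $\Omega - \Omega^{\le g(n)}$ is strictly smaller than the distance from $\Omega^{\le g(n)}$ to the nearest dyadic rational with denominator $2^n$.

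The hardest step is exhibiting a single total recursive $g$ that works uniformly in $n$, since the modulus of convergence $\Omega^{\le k} \nearrow \Omega$ is a priori not recursive. I would resolve this by a compression argument exploiting the randomness of $\Omega$: if the tail $\Omega - \Omega^{\le g(n)}$ were, for infinitely many $n$, large enough to perturb the computed $n$-th bit, then from the pair $(n, \Omega^{\le g(n)})$ together with a bounded overhead one could synthesize a $U$-description of $\reste{\Omega}{n}$ of length substantially less than $n$, contradicting Chaitin's lower bound $H(\reste{\Omega}{n}) \ge n - O(1)$. Quantifying this tension, with a constant depending on the optimal $U$, yields a recursive bound of the form $g(n) = n + O(1)$; the reduction then proceeds as described. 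I expect this interplay between the $U$-coding and the randomness of $\Omega$ to carry essentially all of the technical content of this direction.
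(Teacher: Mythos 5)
Your first reduction, $\Dom U\le_{wtt}\Omega$, is correct: it is exactly the classical argument that the paper records as Proposition~\ref{DomVrdPfOVqsn} (use $n$ already suffices, and Theorem~\ref{one-way II} shows this is essentially optimal). Note that the paper does not actually prove Theorem~\ref{CN}; it cites \cite{CN97} and derives the statement from the sharper Theorems~\ref{one-way I} and~\ref{one-way II}, so the comparison below is with the proofs of those results.

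The direction $\Omega\le_{wtt}\Dom U$ is where your proposal has a genuine gap, in two respects. First, the bound you hope to extract, $g(n)=n+O(1)$, is provably unattainable: by the implication (i)~$\Rightarrow$~(ii) of Theorem~\ref{one-way I}, any reduction of $\Pf(\Omega)$ to $\Dom U$ in query size $f(n)+O(1)$ forces $\sum_n 2^{n-f(n)}<\infty$, so the use must exceed $n+\log_2 n$ infinitely often. The randomness of $\Omega$ (via the Ample Excess Lemma, Theorem~\ref{AEL}) is precisely what proves this \emph{lower} bound on the use; the compression argument you invoke therefore points in the opposite direction from the one you need. Second, and more fundamentally, the strategy of outputting the $n$-th bit of the truncation $\Omega^{\le g(n)}=\sum\{2^{-\abs{p}}:p\in\Dom U,\ \abs{p}\le g(n)\}$ cannot be repaired by enlarging $g$: by Chaitin's counting theorem, $\#\{p\in\Dom U:\abs{p}\le k\}=2^{k-H(k)+O(1)}$, which yields $\Omega-\Omega^{\le k}\ge 2^{-H(2k)-O(1)}$; since $H(2g(n))\le H(n)+O(1)=O(\log n)$ for any total recursive $g$, the truncation error at cutoff $g(n)$ is at least $n^{-O(1)}$ --- only polynomially small --- so the computed $n$-th bit is wrong for all large $n$, no matter how fast $g$ grows. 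The correct argument (the paper's Theorem~\ref{I-re}) encodes the information differently: choosing $f$ with $\sum_n 2^{n-f(n)}<\infty$ (e.g.\ $f(n)=2n$), the Kraft--Chaitin theorem provides a prefix-free machine $F$ having, for each $n$ and each $s\in\{0,1\}^n$, a codeword of length $f(n)+O(1)$ that is placed in $\Dom F$ exactly when $s\le\rest{\Omega}{n}$ as dyadic integers; then $\rest{\Omega}{n}$ is read off from $2^n$ membership queries to $\Dom F$, and $\Dom F$ reduces to $\Dom U$ with constant overhead by Theorem~\ref{op-p}. This gives $\Omega\le_{wtt}\Dom U$ with use $2n+O(1)$, which is what the theorem requires.
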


In \cite{T02}
we generalized $\Omega$ to $Z(T)$
by
\begin{equation}\label{partition_function}
  Z(T)=\sum_{p\in\Dom U}2^{-\frac{\abs{p}}{T}}
\end{equation}
so that
the partial randomness of $Z(T)$ equals to $T$ if $T$ is a computable real with $0<T\le 1$.%
\footnote{%
In \cite{T02},
$Z(T)$ is denoted by $\Omega^T$.}
Here the notion of \textit{partial randomness} of a real is a stronger representation of the compression rate of the real by means of program-size complexity.
The real function $Z(T)$ of $T$ is a function of class $C^\infty$ on $(0,1)$ and an increasing continuous function on $(0,1]$.
In the case of $T=1$,
$Z(T)$
results in
$\Omega$, i.e., $Z(1)=\Omega$.
We can show Theorem~\ref{CNT} below for $Z(T)$.
This theorem follows immediately from stronger results, Theorems~\ref{main1} and \ref{main2}, which are two of the main results of this paper.

\begin{theorem}\label{CNT}
Suppose that $T$ is a computable real with
$0<T<1$.
Then
$Z(T)$ and $\Dom U$ are weak truth-table equivalent.
\qed
\end{theorem}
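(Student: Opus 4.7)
The plan is to establish each direction of the equivalence by exhibiting a reduction whose use is bounded by a linear function of the input length, which is trivially total recursive.

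For $Z(T)\le_{wtt}\Dom U$, given oracle $\Dom U$ and an input asking for the first $n$ bits of the binary expansion of $Z(T)$, I would query $\Dom U$ on every string of length at most $L=\lceil cn/(1-T)\rceil$ for a suitable constant $c$, compute the finite sum
\[Z_L(T)=\sum_{\substack{p\in\Dom U\\ \abs{p}\le L}}2^{-\abs{p}/T}\]
to high precision (exploiting computability of $T$), and invoke the tail estimate
\[Z(T)-Z_L(T)\le\sum_{k>L}2^{k(1-1/T)}=O\bigl(2^{L(1-1/T)}\bigr),\]
a convergent geometric series since $1-1/T<0$. The obstacle here is that a $2^{-n}$-accurate approximation could still straddle a dyadic boundary $k/2^n$ and fail to pin down the $n$-th bit. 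This will be resolved by the partial-randomness lower bound of \cite{T02}: if $\abs{Z(T)-k/2^n}<2^{-m}$ then the first $m$ bits of $Z(T)$ admit a prefix-free description of length $n+O(\log m)$, contradicting the partial-randomness bound $mT-O(1)$ once $m\gg n/T$. Thus $Z(T)$ stands off from every dyadic $k/2^n$ by at least $2^{-n/T-O(\log n)}$, and choosing $L$ so that the tail falls below this gap determines the first $n$ bits unambiguously.

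For $\Dom U\le_{wtt}Z(T)$, on input $x$ of length $n$ I would query the oracle for the first $N=\lceil n/T\rceil$ bits of $Z(T)$ to obtain a rational $q$ with $q\le Z(T)<q+2^{-N}$, then enumerate $\Dom U$ stage by stage, forming the left-computable approximations $Z_s(T)=\sum_{p\in\Dom_sU}2^{-\abs{p}/T}$, and wait for the first stage $s$ at which $Z_s(T)>q$ (effectively verifiable by computing $Z_s(T)$ to adaptive precision). Such an $s$ exists because $Z(T)$ is irrational, so $q<Z(T)=\lim_s Z_s(T)$. At that stage,
\[Z(T)-Z_s(T)<(q+2^{-N})-q=2^{-N}\le 2^{-n/T},\]
so any program $p$ entering $\Dom U$ at a later stage must satisfy $2^{-\abs{p}/T}<2^{-n/T}$, i.e., $\abs{p}>n$. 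Hence $x\in\Dom U$ iff $x\in\Dom_sU$. Both reductions use at most $O(n/(1-T))$ query length --- a total recursive function of $n$, under any natural encoding of reals as subsets of $\X$ --- yielding weak truth-table equivalence.
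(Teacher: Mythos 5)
Your proof is correct, but for the direction $Z(T)\le_{wtt}\Dom U$ it takes a genuinely different route from the paper. The paper obtains Theorem~\ref{CNT} as an immediate corollary of the sharp characterizations in Theorems~\ref{main1} and \ref{main2}; the hard half, Theorem~\ref{main1} (ii) $\Rightarrow$ (i), is proved via Theorem~\ref{DomVTn-reT}, which requires the Calude--Hay--Stephan construction of a $T$-random r.e.\ real computable from $\rest{\Dom U}{\lfloor Tn\rfloor+O(1)}$, the $\Omega(T)$-likeness (domination) machinery of Theorem~\ref{partial randomness}, and the bounded-run property of Theorem~\ref{bounded-run}. Your argument bypasses all of this: you combine the crude tail estimate $\sum_{k>L}2^{k(1-1/T)}=O(2^{L(1-1/T)})$ with a dyadic standoff bound derived from the weak Chaitin $T$-randomness of $Z(T)$ (if $\abs{Z(T)-k/2^n}<2^{-m}$ then $Tm-O(1)\le H(\rest{Z(T)}{m})\le n+O(\log n)+O(\log m)$, forcing $m\le n/T+O(\log n)$), and this checks out, with the usual understanding that the non-effective constants are hard-coded into the reduction machine. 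What you give up is precision: your use is $\Theta(n/(1-T))$, which degenerates as $T\to 1$ and is far from the optimal $Tn+O(1)$ of Theorem~\ref{main1} --- indeed your bound could not detect the phase transition at $T=1$ that is the point of the paper --- but it is a total recursive bound and therefore entirely sufficient for bare weak truth-table reducibility. Your other direction, $\Dom U\le_{wtt}Z(T)$, is essentially the paper's Theorem~\ref{ZVThaltC} specialized to $F=V=U$ (which lets you drop the optimality constant $d$), so nothing new there. Two minor imprecisions worth fixing in a write-up: the second reduction's use is $\lceil n/T\rceil$, not $O(n/(1-T))$; and the verification of $Z_s(T)>q$ should be phrased as dovetailing over stages $s$ and precisions, since for a fixed $s$ with $Z_s(T)\le q$ the comparison never terminates.
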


When comparing Theorem~\ref{CN} and Theorem~\ref{CNT},
we see that
there is no difference between $T=1$ and $T<1$
with respect to
the weak truth-table equivalence between $Z(T)$ and $\Dom U$.
In this paper, however, we show that there is a critical difference between $T=1$ and $T<1$ in the relation between $Z(T)$ and $\Dom U$
from the point of view of the reducibility in query size $f$.
Based on the notion of reducibility in query size $f$,
we introduce the notions of \textit{unidirectionality} and \textit{bidirectionality} between two sets $A$ and $B$ in this paper.
These notions enable us to investigate the relative computational power between $A$ and $B$.

Theorems~\ref{one-way I} and \ref{one-way II} below are
two of the main results of this paper.
Theorem~\ref{one-way I} gives a succinct equivalent characterization of $f$
for which $\Omega$ is reducible to $\Dom U$ in query size $f$
and
reversely
Theorem~\ref{one-way II} gives a succinct equivalent characterization of $f$
for which $\Dom U$ is reducible to $\Omega$ in query size $f$,
both
in a general setting.
Based on them,
we
show in Theorem~\ref{one-wayness_in_both_directions} below
that the computation from $\Omega$ to $\Dom U$ is unidirectional
and the computation from $\Dom U$ to $\Omega$ is also unidirectional.
On the other hand,
Theorems~\ref{main1} and \ref{main2} below are also
two of the main results of this paper.
Theorem~\ref{main1} gives a succinct equivalent characterization of $f$
for which $Z(T)$ is reducible to $\Dom U$ in query size $f$
and
reversely
Theorem~\ref{main2} gives a succinct equivalent characterization of $f$
for which $\Dom U$ is reducible to $Z(T)$ in query size $f$,
both
in a general setting,
in the case where $T$ is a computable real with $0<T<1$.
Based on them,
we show in Theorem~\ref{two-wayness_for_T<1} below that
the computations between $Z(T)$ and $\Dom U$ are bidirectional
if $T$ is a computable real with $0<T<1$.
In this
way
the notion of
reducibility
in query size $f$
can reveal
a critical difference of the behavior of $Z(T)$ between $T=1$ and $T<1$,
which cannot be captured by the original notion of weak truth-table
reducibility.

In our former work \cite{T09CiE} we considered
some
elaboration of weak truth-table equivalence between $\Omega$ and $\Dom U$ and showed the unidirectionality between them in a certain form.
Compared with
this
paper, however, the treatments of \cite{T09CiE} were insufficient in the correspondence to computational complexity theory.
In this paper, based on the notion of
reducibility
in query size $f$, we sharpen the results of \cite{T09CiE}
with a thorough emphasis on
a formal correspondence to computational complexity theory.

\subsection{Statistical Mechanical Interpretation of AIT as Motivation}
\label{smiait}

In this subsection we explain the motivation of this work.
The readers can skip this subsection if they are not interested in the motivation.

In \cite{T08CiE}
we introduced and developed
the statistical mechanical interpretation of AIT.
We there introduced
\textit{the thermodynamic quantities at temperature $T$},
such as partition function $Z(T)$, free energy $F(T)$, energy $E(T)$, statistical mechanical entropy $S(T)$, and specific heat $C(T)$,
into AIT.
These quantities are real functions of a real argument $T>0$,
and are introduced based on
$\Dom U$
in the following manner.

In statistical mechanics,
the partition function $Z_{\mathrm{sm}}(T)$,
free energy $F_{\mathrm{sm}}(T)$,
energy $E_{\mathrm{sm}}(T)$,
entropy $S_{\mathrm{sm}}(T)$,
and specific heat $C_{\mathrm{sm}}(T)$
at temperature $T$
are given
as follows:
\begin{equation}\label{tdqsm}
\begin{split}
  Z_{\mathrm{sm}}(T)
  &=\sum_{x\in X}e^{-\frac{E_x}{k_{\mathrm{B}}T}},
  \hspace*{27.5mm}
  F_{\mathrm{sm}}(T)
  =-k_{\mathrm{B}}T\ln Z_{\mathrm{sm}}(T), \\
  E_{\mathrm{sm}}(T)
  &=\frac{1}{Z_{\mathrm{sm}}(T)}\sum_{x\in X}E_xe^{-\frac{E_x}{k_{\mathrm{B}}T}},
  \hspace*{9mm}
  S_{\mathrm{sm}}(T)
  =\frac{E_{\mathrm{sm}}(T)-F_{\mathrm{sm}}(T)}{T}, \\
  &\hspace*{25mm}
  C_{\mathrm{sm}}(T)
  =\frac{d}{dT}E_{\mathrm{sm}}(T),
\end{split}
\end{equation}
where $X$ is a complete set of energy eigenstates of
a quantum system
and $E_x$ is the energy of an energy eigenstate $x$.
The constant $k_{\mathrm{B}}$ is called \textit{the Boltzmann Constant},
and the $\ln$ denotes the natural logarithm.
For
the meaning and importance of these thermodynamic quantities in statistical mechanics,
see e.g.~Chapter 16 of \cite{C85} or Chapter 2 of \cite{TKS92}.%
\footnote{
To be precise,
the partition function is not a thermodynamic quantity
but a statistical mechanical quantity.}

In \cite{T08CiE} we introduced
thermodynamic quantities
into AIT
by performing Replacements~\ref{CS06} below
for the thermodynamic quantities \eqref{tdqsm} in statistical mechanics.

\begin{replacements}\label{CS06}\hfill
\begin{enumerate}
  \item Replace the complete set $X$ of energy eigenstates $x$
    by the set $\Dom U$ of all programs $p$ for $U$.
  \item Replace the energy $E_x$ of an energy eigenstate $x$
    by the length $\abs{p}$ of a program $p$.
  \item Set the Boltzmann Constant $k_{\mathrm{B}}$ to $1/\ln 2$.%
    \footnote{The so-called Boltzmann's entropy formula has the form $S_{\mathrm{sm}}=k_{\mathrm{B}}\ln W$,
    where $W$ is the number of microstates consistent with a given macrostate.
    By setting $k_{\mathrm{B}}=1/\ln 2$,
    the Boltzmann formula results in the form $S_{\mathrm{sm}}=\log_2 W$.
    Thus, since the logarithm is to the base $2$ in the resultant formula, Replacements~\ref{CS06} (iii) is
    considered to be natural from the points of view of AIT and classical information theory.}\qed
\end{enumerate}
\end{replacements}

For example, based on Replacements~\ref{CS06}, the partition function $Z(T)$ at temperature $T$ is introduced from \eqref{tdqsm} as
$Z(T)=\sum_{p\in\Dom U}2^{-\abs{p}/T}$.
This is precisely $Z(T)$ defined by \eqref{partition_function}.
In general, the thermodynamic quantities in AIT are variants of Chaitin $\Omega$ number.

In \cite{T08CiE} we proved that
if the temperature $T$ is a computable real with $0<T<1$
then, for each of the thermodynamic quantities $Z(T)$, $F(T)$, $E(T)$, $S(T)$, and $C(T)$, the partial randomness of its value equals to $T$.
Thus,
the temperature $T$ plays a role as
the partial randomness (and therefore the compression rate) of all the thermodynamic quantities
in the statistical mechanical interpretation of AIT.
In \cite{T08CiE}
we further showed that
the temperature $T$ plays a role as the partial randomness of
the temperature $T$ itself,
which is a thermodynamic quantity of itself
in thermodynamics or statistical mechanics.
Namely,
we proved
\textit{the fixed point theorem for partial randomness},%
\footnote{The fixed point theorem for partial randomness is called a fixed point theorem on compression rate in \cite{T08CiE}.}
which states that, for every $T\in(0,1)$,
if the value of the partition function $Z(T)$ at temperature $T$ is a computable real,
then the partial randomness of $T$ equals to $T$, and therefore the compression rate of $T$ equals to $T$, i.e.,
$\lim_{n\to\infty}H(\rest{T}{n})/n=T$, where $\rest{T}{n}$ is the first $n$ bits of the base-two expansion of $T$.

In our second work \cite{T09LFCS} on the
interpretation,
we showed that
a fixed point theorem
of the same form as for $Z(T)$ holds also for each of free energy $F(T)$, energy $E(T)$, and statistical mechanical entropy $S(T)$.
Moreover,
based on
the statistical mechanical relation $F(T)=-T\log_2 Z(T)$,
we showed that the computability of $F(T)$ gives completely different fixed points from the computability of $Z(T)$.

In the third work \cite{T09ITW},
we pursued the formal correspondence between the statistical mechanical interpretation of AIT and normal statistical mechanics further,
and then unlocked the properties of the sufficient conditions (i.e., the computability of $Z(T)$, $F(T)$, $E(T)$, or $S(T)$ for $T$)
for the fixed points for partial randomness further.
Recall that the thermodynamic quantities in AIT are defined based on the domain of definition of an optimal prefix-free machine $U$.
In \cite{T09ITW}, we showed that
there are infinitely many optimal prefix-free machines which
give completely different sufficient conditions in all of the thermodynamic quantities in AIT.
We did this by introducing the notion of composition of prefix-free machines
into
AIT, which corresponds to the notion of composition of systems in normal statistical mechanics.

How are Replacements~\ref{CS06} justified ?
Generally speaking,
in order to give a statistical mechanical interpretation to a framework
which looks unrelated to statistical mechanics at first glance,
it is important to identify a \textit{microcanonical ensemble}
in the framework.
Once we can do so,
we can easily develop an equilibrium statistical mechanics on the framework
according to the theoretical development of
normal equilibrium statistical mechanics.
Here, the microcanonical ensemble is a certain sort of
uniform probability distribution.
In fact,
in the work \cite{T07}
we developed
a statistical mechanical interpretation of
the noiseless source coding scheme
in information theory
by identifying a microcanonical ensemble in the scheme.
Then,
based on this identification,
in \cite{T07}
the notions in statistical mechanics
such as statistical mechanical entropy, temperature,
and thermal equilibrium are translated into the context of
noiseless source coding.

Thus,
in order to develop a total statistical mechanical interpretation of AIT,
it is appropriate to identify a microcanonical ensemble in the framework of
AIT.
Note, however, that
AIT is not a physical theory but a purely mathematical theory.
Therefore,
in order to obtain significant results
for the development of AIT itself,
we have to develop
a statistical mechanical interpretation
of AIT
in a mathematically rigorous manner,
unlike in normal statistical mechanics in physics
where arguments are not necessarily mathematically rigorous.
A fully rigorous mathematical treatment of statistical mechanics is
already developed (see Ruelle \cite{Ru99}).
At present,
however,
it would not as yet seem to be an easy task to merge AIT
with this mathematical treatment in a satisfactory manner.
In our former works \cite{T08CiE,T09LFCS,T09ITW} mentioned above,
for mathematical strictness
we developed a statistical mechanical interpretation of AIT
in a different way from
the idealism above.
We there introduced the thermodynamic quantities at temperature $T$
into AIT
by performing Replacements~\ref{CS06}
for the corresponding thermodynamic quantities \eqref{tdqsm} at temperature $T$
in statistical mechanics.
We then obtained the various rigorous results, as reviewed in the above.

On the other hand, in the work \cite{T10JPCS} we showed that,
if we do not stick to the mathematical strictness of an argument,
we can certainly develop a total statistical mechanical interpretation of AIT
which
attains
a perfect correspondence to normal statistical mechanics.
In the total interpretation,
we identify a microcanonical ensemble in AIT in a similar manner to \cite{T07},
based on the probability measure which gives
Chaitin $\Omega$ number
the meaning of the halting probability actually.
This identification
clarifies
the
meaning of the thermodynamic quantities of AIT,
which are originally introduced by \cite{T08CiE} in a rigorous manner
based on Replacements~\ref{CS06}.

In the present paper, we continue the rigorous treatment of the statistical mechanical interpretation of AIT performed
by our former works \cite{T08CiE,T09LFCS,T09ITW}.
As a result, we reveal a new aspect of the thermodynamic quantities of AIT.
The work \cite{T08CiE} showed that
the values of all the thermodynamic quantities,
including $Z(T)$,
diverge when the temperature $T$ exceeds $1$.
This phenomenon
may
be regarded as
\textit{phase transition} in statistical mechanics.
The present
paper reveals
a new aspect of the phase transition
by showing
the critical difference of the behavior of $Z(T)$ between $T=1$ and $T<1$
in terms of
reducibility
in query size $f$.

\subsection{Organization of the Paper}

We begin in Section~\ref{preliminaries} with some preliminaries to AIT and partial randomness.
In Section~\ref{elaboration} we investigate
simple properties of the notion of reducibility in query size $f$ and introduce the notions of unidirectionality and bidirectionality between two sets based on it.
We then show in Section~\ref{one-wayness} the unidirectionality between $\Omega$ and $\Dom U$ in a general setting.
In Section~\ref{T-convergent re} we present theorems which play a crucial role
in establishing the bidirectionality in Section~\ref{two-wayness}.
Based on them, we show in Section~\ref{two-wayness} the bidirectionality between $Z(T)$ and $\Dom U$ with a computable real $T\in(0,1)$ in a general setting.
We conclude this paper with
the remarks on the origin of the phase transition of the behavior of $Z(T)$ between $T=1$ and $T<1$ in Section~\ref{conclusion}.

\section{Preliminaries}
\label{preliminaries}

\subsection{Basic Notation}
\label{basic notation}

We start with some notation about numbers and strings
which will be used in this paper.
$\N=\left\{0,1,2,3,\dotsc\right\}$ is the set of natural numbers,
and $\N^+$ is the set of positive integers.
$\Z$ is the set of integers, and
$\Q$ is the set of rationals.
$\R$ is the set of reals.
%
A sequence $\{a_n\}_{n\in\N}$ of numbers
(rationals or reals)
is called \textit{increasing} if $a_{n+1}>a_{n}$ for all $n\in\N$.

Normally, $o(n)$ denotes any function $f\colon \N^+\to\R$ such that $\lim_{n \to \infty}f(n)/n=0$.
On the other hand, $O(1)$ denotes any function $g\colon \N^+\to\R$ such that there is $C\in\R$ with the property that $\abs{g(n)}\le C$ for all $n\in\N^+$.

$\X=
\left\{
  \lambda,0,1,00,01,10,11,000,001,010,\dotsc
\right\}$
is the set of finite binary strings
where $\lambda$ denotes the \textit{empty string},
and $\X$ is ordered as indicated.
We identify any string in $\X$ with a natural number in this order,
i.e., we consider $\varphi\colon \X\to\N$ such that $\varphi(s)=1s-1$ where
the concatenation $1s$ of strings $1$ and $s$ is regarded as a dyadic integer, and then we identify $s$ with $\varphi(s)$.
For any $s \in \X$, $\abs{s}$ is the \textit{length} of $s$.
For any $n\in\N$, we denote by $\{0,1\}^n$
the set $\{\,s\mid s\in\X\;\&\;\abs{s}=n\}$.
A subset $S$ of $\X$ is called
\textit{prefix-free}
if no string in $S$ is a prefix of another string in $S$.
For any subset $S$ of $\X$ and any $n\in\N$,
we denote by $\rest{S}{n}$
the set $\{s\in S\mid \abs{s}\le n\}$.
For any
function $f$,
the domain of definition of $f$ is denoted by $\Dom f$.
We write ``r.e.'' instead of ``recursively enumerable.''

Let $\alpha$ be an arbitrary real.
$\lfloor \alpha \rfloor$ is the greatest integer less than or equal to $\alpha$,
and $\lceil \alpha \rceil$ is the smallest integer greater than or equal to $\alpha$.
For
any $n\in\N$,
we denote by $\rest{\alpha}{n}\in\X$
the first $n$ bits of the base-two expansion of
$\alpha - \lfloor \alpha \rfloor$ with infinitely many zeros.
For example, in the case of $\alpha=5/8$, $\rest{\alpha}{6}=101000$.
On the other hand, for any non-positive integer $n\in\Z$, we set $\rest{\alpha}{n}=\lambda$.

A real $\alpha$ is called \textit{r.e.}~if
there exists a computable,
increasing sequence of rationals which converges to $\alpha$.
An r.e.~real is also called a
\textit{left-computable} real.
We say that a real $\alpha$ is \textit{computable} if
there exists a computable sequence $\{a_n\}_{n\in\N}$ of rationals
such that $\abs{\alpha-a_n} < 2^{-n}$ for all $n\in\N$.
It is then easy to see that,
for every real $\alpha$,
the following four conditions
are equivalent:
(i) $\alpha$ is computable.
(ii) $\alpha$ is r.e.~and $-\alpha$ is r.e.
(iii) If $f\colon\N\to\Z$ with $f(n)=\lceil\alpha n\rceil$
then $f$ is a total recursive function.
(iv) If $g\colon\N\to\Z$ with $g(n)=\lfloor\alpha n\rfloor$
then $g$ is a total recursive function.

\subsection{Algorithmic Information Theory}
\label{ait}

In the following
we concisely review some definitions and results of
AIT
\cite{C75,C87b,C02,N09,DH10}.
A \textit{prefix-free machine} is a partial recursive function $F\colon \X\to \X$
such that $\Dom F$ is a prefix-free set.
For each prefix-free machine $F$ and each $s \in \X$, $H_F(s)$ is defined by
$H_F(s) =
\min
\left\{\,
  \abs{p}\,\big|\;p \in \X\>\&\>F(p)=s
\,\right\}$
(may be $\infty$).
A prefix-free machine $U$ is said to be \textit{optimal} if
for each prefix-free machine $F$ there exists $d\in\N$
with the following property;
if $p\in\Dom F$, then there is $q\in\Dom U$ for which
$U(q)=F(p)$ and $\abs{q}\le\abs{p}+d$.
It is then easy to see that there exists an optimal prefix-free machine.
We choose a particular optimal prefix-free machine $U$
as the standard one for use,
and define $H(s)$ as $H_U(s)$,
which is referred to as
the \textit{program-size complexity} of $s$, the \textit{information content} of $s$, or the \textit{Kolmogorov complexity} of $s$ \cite{G74,L74,C75}.
%
For any $s,t\in\X$,
we define $H(s,t)$ as $H(b(s,t))$,
where $b\colon \X\times \X\to \X$ is
a particular bijective total recursive function.

Chaitin~\cite{C75} introduced $\Omega$ number as follows.
For each optimal prefix-free machine $V$, the halting probability $\Omega_V$ of $V$ is defined by
\begin{equation*}
  \Omega_V=\sum_{p\in\Dom V}2^{-\abs{p}}.
\end{equation*}
For every optimal prefix-free machine $V$,
since $\Dom V$ is prefix-free,
$\Omega_V$ converges and $0<\Omega_V\le 1$.
For any $\alpha\in\R$,
we say that $\alpha$ is \textit{weakly Chaitin random}
if there exists $c\in\N$ such that
$n-c\le H(\rest{\alpha}{n})$ for all $n\in\N^+$
\cite{C75,C87b}.
Chaitin \cite{C75} showed that
$\Omega_V$ is weakly Chaitin random
for every optimal prefix-free machine $V$.
Therefore $0<\Omega_V<1$ for every optimal prefix-free machine $V$.

Let $M$ be
a
deterministic Turing machine
with the input and output alphabet $\{0,1\}$, 
and let
$F$
be a prefix-free machine.
We say that $M$ \textit{computes} $F$ if the following holds:
for every $p\in\X$,
when $M$ starts with the input $p$,
(i) $M$ halts and outputs $F(p)$ if $p\in\Dom F$;
(ii) $M$ does not halt forever otherwise.
We use this convention on the computation of a prefix-free machine
by a deterministic Turing machine
throughout the rest of this paper.
Thus, we exclude the possibility that
there is $p\in\X$ such that,
when $M$ starts with the input $p$,
$M$ halts
but
$p\notin\Dom F$.
For any $p\in\X$,
we denote the running time of $M$ on the input $p$
by $T_M(p)$ (may be $\infty$).
Thus, 
$T_M(p)\in\N$ for every $p\in\Dom F$
if $M$ computes $F$.

We define
$L_M=
\min\{\,\abs{p}\mid p\in\X\text{ \& $M$ halts on input $p$}\}$
(may be $\infty$).
For any $n\ge L_M$,
we define $I_M^n$
as the set of all halting inputs $p$ for $M$ with $\abs{p}\le n$
which take longest to halt in the computation of $M$,
i.e.,
as the set $\{\,p\in\X\mid \abs{p}\le n\;\&\;T_M(p)=T_M^n\,\}$
where $T_M^n$ is the maximum running time of $M$
on all halting inputs of length at most $n$.
In the work~\cite{T09CiE}, we slightly strengthened the result presented in Chaitin \cite{C87b} to obtain
Theorem~\ref{time} below
(see Note in Section~8.1 of Chaitin \cite{C87b}).
We include the proof of Theorem~\ref{time} in Appendix~\ref{proof-time} since the proof is omitted in the work~\cite{T09CiE}.

\begin{theorem}[Chaitin~\cite{C87b} and Tadaki~\cite{T09CiE}]\label{time}
Let $V$ be an optimal prefix-free machine,
and let $M$ be a deterministic Turing machine which computes $V$.
Then
$n=H(n,p)+O(1)=H(p)+O(1)$
for all $(n,p)$ with $n\ge L_M$ and $p\in I_M^n$.
\qed
\end{theorem}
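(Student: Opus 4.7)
The plan is to prove four inequalities: the upper bounds $H(p) \le n + O(1)$ and $H(n,p) \le n + O(1)$, and the lower bounds $H(p) \ge n - O(1)$ and $H(n,p) \ge n - O(1)$. The upper bounds follow from explicit prefix-free machines together with the optimality of $U$: the identity map on $\Dom V$ is a prefix-free machine, giving $H(p) \le |p| + O(1) \le n + O(1)$; and defining $F$ by $F(0^k 1 q) := (k + |q|, q)$ for $q \in \Dom V$ (the domain $\{0^k 1 q : k \ge 0,\, q \in \Dom V\}$ is prefix-free because $\Dom V$ is) yields $H_F((n,p)) \le n + 1$ via the input $0^{n - |p|} 1 p$, hence $H(n,p) \le n + O(1)$.

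For the lower bound on $H(n,p)$, I would compute from $(n,p)$ the quantity $t := T_M(p) = T_M^n$, then determine $\reste{\Dom V}{n}$ by running $M$ for $t$ steps on every string of length $\le n$ (sound since every halting such string finishes within $T_M^n = t$ steps). The image $S_n := \{V(q) : q \in \reste{\Dom V}{n}\}$ equals $\{s : H_V(s) \le n\}$ and is finite, so one can effectively output its lex-first absentee $s^*$, which satisfies $H_V(s^*) > n$ and hence $H(s^*) > n - O(1)$ by optimality of $V$. Since $s^*$ is computed from $(n,p)$ via a prefix-free machine whose domain sits inside $\Dom U$, one also has $H(s^*) \le H(n,p) + O(1)$, closing the inequality.

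The key step is the lower bound $H(p) \ge n - O(1)$, which has to work from $p$ alone. From $p$ one computes $t = T_M(p)$ and then the finite set $R := \{q \in \Dom V : T_M(q) \le t\}$, which is effective because every such $q$ satisfies $|q| \le t$. Let $s^{**}$ be the lex-first string missing from $V(R)$. The key analytic device is $n^{**} := \min\{|q| : q \in \Dom V,\, T_M(q) > t\}$, finite (long enough halters must eventually exceed runtime $t$) but not computable from $p$. Every $q \in \Dom V$ with $|q| < n^{**}$ has $T_M(q) \le t$ and hence lies in $R$, which forces $H_V(s^{**}) \ge n^{**}$. The hypothesis $p \in I_M^n$ now implies $n < n^{**}$: otherwise some length-$\le n$ halter would have runtime exceeding $t = T_M^n$, contradicting the definition of $T_M^n$. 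Therefore $H(s^{**}) \ge n^{**} - O(1) > n - O(1)$, and combined with $H(s^{**}) \le H(p) + O(1)$ (via the prefix-free machine computing $s^{**}$ from $p$) this gives the bound. The main obstacle will be isolating the non-computable quantity $n^{**}$---which appears purely as an analytic device to lower-bound $H_V(s^{**})$---and carefully verifying that $p \in I_M^n$ forces $n < n^{**}$; everything else reduces to routine prefix-free machine constructions and applications of optimality.
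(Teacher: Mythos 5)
Your proof is correct, and the crucial part of it takes a genuinely different route from the paper's. The paper closes the circle $n\le H(n,p)+O(1)\le H(p)+O(1)\le n+O(1)$, where the middle inequality is obtained from a separate padding lemma: one builds a prefix-free machine $F$ with $\Dom F=\Dom V$ and $F(p)=1^{2\abs{p}+T_M(p)+1}$, applies optimality of $V$, and uses the fact that $M$ writes at most one output symbol per step to conclude that every $p\in\Dom V$ has a successor $q\in\Dom V$ with $\abs{q}\le\abs{p}+d$ and $T_M(q)>T_M(p)$; for $p\in I_M^n$ this forces $\abs{p}\le n<\abs{p}+d$, so $n$ is recoverable from $p$ with $O(1)$ extra bits and $H(n,p)\le H(p)+O(1)$ follows. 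You instead prove $n\le H(p)+O(1)$ head-on: from $p$ alone you enumerate $R=\{q\in\Dom V : T_M(q)\le T_M(p)\}$ and diagonalize, with $n^{**}$ serving as a purely analytic witness that $R\supseteq\rest{\Dom V}{n}$; your verification that $p\in I_M^n$ forces $n<n^{**}$ is exactly right. This is shorter and avoids the padding lemma entirely; what it loses is that lemma's structural byproduct (that members of $I_M^n$ have length greater than $n-d$). Two small points to make explicit in a full write-up: first, the effectiveness of $R$ rests on the claim that $T_M(q)\le t$ implies $\abs{q}\le t+O(1)$ for $q\in\Dom V$, which holds because a machine halting without scanning its last input cell would also halt on a proper prefix, contradicting prefix-freeness of $\Dom V$ under the paper's convention---a machine-model fact worth a sentence, just as the paper's lemma needs the output-speed bound; second, your direct machine $F(0^k1q)=(k+\abs{q},q)$ for $H(n,p)\le n+O(1)$ is fine but redundant, since that bound already follows from the other three inequalities.
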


\subsection{Partial Randomness}
\label{partial}

In the work \cite{T02},
we generalized the notion of the randomness of
a real
so that \textit{the degree of the randomness}, which is often referred to as
\textit{the partial randomness} recently
\cite{CST06,RS05,CS06},
can be characterized by a real $T$ with $0\le T\le 1$ as follows.

\begin{definition}
  Let $T\in[0,1]$ and let $\alpha\in\R$.
  We say that $\alpha$ is \textit{weakly Chaitin $T$-random} if
  there exists $c\in\N$ such that, for all $n\in\N^+$,
  $Tn-c \le H(\rest{\alpha}{n})$.
  \qed
\end{definition}

In the case of $T=1$,
the weak Chaitin $T$-randomness results in the weak Chaitin randomness.

\begin{definition}
Let $T\in[0,1]$ and let $\alpha\in\R$.
We say that $\alpha$ is \textit{$T$-compressible} if
$H(\rest{\alpha}{n})\le Tn+o(n)$,
namely, if
$\limsup_{n \to \infty}H(\rest{\alpha}{n})/n\le T$.
We say that $\alpha$ is \textit{strictly $T$-compressible} if
there exists $d\in\N$ such that, for all $n\in\N^+$,
$H(\rest{\alpha}{n})\le Tn+d$.
\qed
\end{definition}

For every $T\in[0,1]$ and every $\alpha\in\R$,
if $\alpha$ is weakly Chaitin $T$-random and $T$-compressible,
then $\lim_{n\to \infty} H(\rest{\alpha}{n})/n = T$,
i.e., the \textit{compression rate} of $\alpha$ equals to $T$.

In the work \cite{T02},
we generalized
Chaitin $\Omega$ number
to $Z(T)$ as follows.
For each optimal prefix-free machine $V$ and each real $T>0$,
the \textit{partition function} $Z_V(T)$ of $V$ at temperature $T$
is defined by
\begin{equation*}
  Z_V(T) = \sum_{p\in\Dom V}2^{-\frac{\abs{p}}{T}}.
\end{equation*}
Thus,
$Z_V(1)=\Omega_V$.
If $0<T\le 1$, then $Z_V(T)$ converges and $0<Z_V(T)<1$, since $Z_V(T)\le \Omega_V<1$.
The following theorem holds for $Z_V(T)$.

\begin{theorem}[Tadaki \cite{T02}]\label{pomgd}
Let $V$ be an optimal prefix-free machine.
\begin{enumerate}
  \item If $0<T\le 1$ and $T$ is computable, then $Z_V(T)$ is an r.e.~real which is weakly Chaitin $T$-random and $T$-compressible.
  \item If $1<T$, then $Z_V(T)$ diverges to $\infty$.\qed
\end{enumerate}
\end{theorem}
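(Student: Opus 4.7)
The plan is to prove the four sub-claims of (i) and (ii) in turn, with $T$-compressibility being the most delicate point.

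For \emph{r.e.~ness} in (i), enumerate $\Dom V$ as $p_1,p_2,\dotsc$ and form the partial sums $S_k=\sum_{i\le k}2^{-|p_i|/T}$; since $T$ is computable each $S_k$ is uniformly computable as a real, and a standard diagonal extraction yields a computable increasing rational sequence converging to $Z_V(T)$. For the \emph{divergence} in (ii), the standard bound $H_V(s)\le|s|+O(\log|s|)$ (a consequence of optimality of $V$) supplies, for every $s\in\{0,1\}^n$, a halting program $p_s$ with $V(p_s)=s$ and $|p_s|\le n+O(\log n)$; these programs are pairwise distinct, so
\[
  Z_V(T)\;\ge\;\sum_{s\in\{0,1\}^n}2^{-|p_s|/T}\;\ge\;2^{n(1-1/T)-O(\log n)/T}.
\]
This lower bound tends to $+\infty$ with $n$ whenever $T>1$, so $Z_V(T)=\infty$.

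For the \emph{weak Chaitin $T$-randomness}, I adapt Chaitin's proof for $\Omega$. Define the partial recursive $\psi\colon\X\to\X$ that, on input $s$ with $|s|=n$, enumerates $\Dom V$ while maintaining a running partial sum, waits until this sum strictly exceeds the dyadic rational $0.s$, and outputs the lex-least $\sigma$ not in the image of $V$ on the enumerated programs of length $\le\lfloor Tn\rfloor$. When $s=\rest{Z_V(T)}{n}$ we have $0.s\ge Z_V(T)-2^{-n}$, so at the crossing moment every un-enumerated halting $p$ satisfies $2^{-|p|/T}<2^{-n}$, i.e., $|p|>Tn$; thus every halting program of length $\le\lfloor Tn\rfloor$ has already been enumerated, $H_V(\sigma)>\lfloor Tn\rfloor$, and $H(\sigma)>Tn-O(1)$ by optimality of $V$. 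Composing $\psi$ with $U$ produces a prefix-free machine $M$ (with domain contained in $\Dom U$) satisfying $M(q)=\sigma$ whenever $U(q)=\rest{Z_V(T)}{n}$, so $H(\sigma)\le H(\rest{Z_V(T)}{n})+O(1)$; combining gives $H(\rest{Z_V(T)}{n})\ge Tn-O(1)$.

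For \emph{$T$-compressibility}, the plan is to describe $\rest{Z_V(T)}{n}$ by a program of length $Tn+o(n)$. Starting from the Kraft-type tail estimate
\[
  Z_V(T)-Z_V^{(m)}(T)\;=\;\sum_{p\in\Dom V,\,|p|>m}2^{-|p|/T}\;\le\;2^{-(m+1)(1/T-1)}
\]
(where $Z_V^{(m)}(T)$ denotes the truncation of the sum to $|p|\le m$), one sees that $\rest{Z_V(T)}{n}$ is determined up to a bounded carry by $\Dom V\cap\{0,1\}^{\le m}$ once $m\ge nT/(1-T)-O(1)$. I anticipate this to be the main technical obstacle: encoding this halting data directly via $\rest{\Omega_V}{m+O(1)}$ costs $\approx nT/(1-T)$ bits, strictly exceeding the target $Tn+o(n)$ whenever $T\in(0,1)$, so a subtler description is needed. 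My plan to close the gap is to exploit the geometric structure $Z_V(T)=\sum_k n_k r^k$ with $r=2^{-1/T}<1/2$ together with the computability of $T$: for each $\epsilon>0$, I would construct a rational approximation of $Z_V(T)$ within $2^{-n}$ whose Kolmogorov complexity is at most $(T+\epsilon)n$, for instance via a short halting certificate in the spirit of Theorem~\ref{time} that records only the information about the coefficients $n_k$ needed to achieve the required precision.
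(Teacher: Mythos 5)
The paper does not actually prove Theorem~\ref{pomgd}; it is imported from \cite{T02}, so I am comparing you against the original argument. Three of your four claims are handled correctly and by the standard route: r.e.-ness via computable partial sums, divergence for $T>1$ via the $2^n$ pairwise distinct programs of length $n+O(\log n)$, and weak Chaitin $T$-randomness via Chaitin's crossing argument. (One small repair to the last: wait until the running sum exceeds $0.s-2^{-n}$ rather than $0.s$, so that the search provably terminates even if $Z_V(T)$ happened to be a dyadic rational; this costs only $O(1)$ in the final bound.)

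The $T$-compressibility claim, however, is a genuine gap, as you acknowledge, and the direction you propose for closing it does not lead anywhere: under the decomposition by program length, \emph{every} coefficient $n_k$ with $k\le m\approx nT/(1-T)$ is needed to pin down the first $n$ bits, and their joint description is exactly $\rest{\Dom V}{m}$, of complexity about $nT/(1-T)>Tn$. The missing idea is to index the approximation by an \emph{enumeration stage} rather than by a length cutoff, and to pay for that stage with a prefix of $\Omega_V$. Put $m=\lceil Tn\rceil$. From $\rest{\Omega_V}{m}$ one effectively obtains a finite set $E\subset\Dom V$ (enumerate $\Dom V$ until the partial sum of $\Omega_V$ exceeds $0.(\rest{\Omega_V}{m})-2^{-m}$, say) such that $\sum_{p\notin E}2^{-\abs{p}}<2^{-m+1}$ and hence every $p\notin E$ has $\abs{p}\ge m$. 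The tail of $Z_V(T)$ then enjoys \emph{two multiplicative savings at once}:
\begin{equation*}
  \sum_{p\notin E}2^{-\frac{\abs{p}}{T}}
  =\sum_{p\notin E}2^{-\abs{p}}\,2^{-\abs{p}\left(\frac{1}{T}-1\right)}
  \le 2^{-m\left(\frac{1}{T}-1\right)}\cdot 2^{-m+1}
  = 2^{-\frac{m}{T}+1}\le 2^{-n+1}.
\end{equation*}
Thus $\sum_{p\in E}2^{-\abs{p}/T}$, which is computable uniformly from $(n,\rest{\Omega_V}{m})$ since $T$ is computable, approximates $Z_V(T)$ to within $2^{-n+1}$, and after resolving the $O(1)$ carry ambiguity one gets $H(\rest{Z_V(T)}{n})\le H(\rest{\Omega_V}{\lceil Tn\rceil})+O(\log n)\le \lceil Tn\rceil+H(\lceil Tn\rceil)+O(\log n)=Tn+o(n)$, as required. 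Your length-cutoff estimate controls only the individual lengths of the missing programs, not their total $2^{-\abs{p}}$-mass; it is the product of the two bounds that eliminates the spurious factor $1/(1-T)$.
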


An r.e.~real has a special property on partial randomness, as shown in Theorem~\ref{partial randomness} below.
For any r.e.~reals $\alpha$ and $\beta$, we say that $\alpha$ \textit{dominates} $\beta$
if there are computable, increasing sequences $\{a_n\}$ and $\{b_n\}$ of rationals and $c\in\N^+$ such that
$\lim_{n\to\infty} a_n =\alpha$, $\lim_{n\to\infty} b_n =\beta$, and $c(\alpha-a_n)\ge \beta-b_n$ for all $n\in\N$
\cite{Sol75}.

\begin{definition}%
[Tadaki \cite{T09MFCS}]
Let $T\in(0,1]$.
An increasing sequence $\{a_n\}$ of reals is called
\textit{$T$-convergent} if
$\sum_{n=0}^{\infty} (a_{n+1}-a_{n})^T\!<\infty$.
An r.e.~real $\alpha$ is called \textit{$T$-convergent} if
there exists a $T$-convergent computable,
increasing sequence of rationals which
converges to $\alpha$.
An r.e.~real $\alpha$ is called \textit{$\Omega(T)$-like}
if it dominates all $T$-convergent r.e.~reals.
\qed
\end{definition}

\begin{theorem}[equivalent characterizations of partial randomness for an r.e.~real,
Tadaki \cite{T09MFCS}]\label{partial randomness}
Let $T$ be a computable real in $(0,1]$,
and let $\alpha$ be an r.e.~real.
Then the following three conditions are equivalent:
(i) $\alpha$ is weakly Chaitin $T$-random.
(ii) $\alpha$ is $\Omega(T)$-like.
(iii) For every $T$-convergent r.e.~real $\beta$
there exists $d\in\N$ such that, for all $n\in\N^+$,
$H(\rest{\beta}{n})\le H(\rest{\alpha}{n})+d$.
\qed
\end{theorem}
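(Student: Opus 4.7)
The strategy is a cyclic proof (iii) $\Rightarrow$ (i) $\Rightarrow$ (ii) $\Rightarrow$ (iii), with the partition function $Z(T)$ playing the role of a distinguished $T$-convergent weakly Chaitin $T$-random r.e.~real that serves as a universal witness. First I would verify that $Z(T)$ is itself $T$-convergent: enumerating $\Dom U$ yields an approximation of $Z(T)$ from below whose consecutive increments have the form $2^{-\abs{p}/T}$, and their $T$-th powers sum to $\sum_{p\in\Dom U} 2^{-\abs{p}} \le 1$ by Kraft's inequality (a routine rational-approximation step turns this into a computable, increasing rational enumeration with the same summability). Combined with Theorem~\ref{pomgd}, this exhibits $Z(T)$ as an r.e., $T$-convergent, weakly Chaitin $T$-random real. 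The implication (iii) $\Rightarrow$ (i) is then immediate: applying (iii) with $\beta=Z(T)$ gives $H(\rest{\alpha}{n}) \ge H(\rest{Z(T)}{n}) - d \ge Tn - O(1)$.

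For (ii) $\Rightarrow$ (iii), fix a $T$-convergent r.e.~real $\beta$ and let $\{a_n\}$, $\{b_n\}$, $c\in\N^+$ be the computable witnesses of the fact that $\alpha$ dominates $\beta$. Given $\rest{\alpha}{n}$ together with $n$, enumerate $\{a_k\}$ until the first $k$ for which $a_k$ exceeds the dyadic rational $0.\rest{\alpha}{n}$; then $\alpha - a_k < 2^{-n}$, and by the domination inequality $\beta - b_k < c\cdot 2^{-n}$, which pins down $\rest{\beta}{n-\lceil\log_2 c\rceil}$. Since only $O(1)$ additional bits of program are required to implement this procedure, $H(\rest{\beta}{n}) \le H(\rest{\alpha}{n}) + O(1)$.

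The main obstacle is (i) $\Rightarrow$ (ii), the partial-randomness extension of the classical Ku\v{c}era--Slaman theorem, which I would prove by contraposition. Assume $\alpha$ fails to dominate some $T$-convergent r.e.~real $\beta$ with rational witness $\{b_n\}$ satisfying $\sum_n (b_{n+1}-b_n)^T < \infty$. Then for every computable increasing enumeration $\{a_n\}$ of $\alpha$ and every $c\in\N^+$ there must be infinitely many indices $n$ with $c(\alpha - a_n) < \beta - b_n$. At each such failure index I take the longest initial segment $\rest{\alpha}{m}$ forced by $a_n$ and enqueue a Kraft--Chaitin request for a program of length about $Tm - c$ that outputs $\rest{\alpha}{m}$, arranging the weight contributed at failure index $n$ to be essentially $(b_{n+1}-b_n)^T$. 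The hypothesis $\sum_n (b_{n+1}-b_n)^T<\infty$ then bounds the total Kraft weight, so the construction yields a prefix-free machine certifying $H(\rest{\alpha}{m}) < Tm - c$ for arbitrarily large $m$ and every $c$, contradicting the weak Chaitin $T$-randomness of $\alpha$. The delicate step, and the core technical difficulty, is aligning the requests so that their weights collapse into the $T$-th powers of the $\beta$-increments; this is precisely where the $T$-convergence hypothesis is consumed and where the partial-randomness argument diverges from the classical $T=1$ case.
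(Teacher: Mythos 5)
First, a remark on scope: the paper does not prove Theorem~\ref{partial randomness} at all --- it is imported verbatim from Tadaki \cite{T09MFCS} --- so your proposal can only be measured against the known proof of that cited result. Your cyclic scheme (iii) $\Rightarrow$ (i) $\Rightarrow$ (ii) $\Rightarrow$ (iii) is the right architecture, and the two easy legs are essentially correct: $Z(T)$ is indeed an r.e., $T$-convergent, weakly Chaitin $T$-random real (Theorem~\ref{pomgd}~(i) together with Theorem~7 of \cite{T09MFCS}), so it serves as the universal witness for (iii) $\Rightarrow$ (i); and (ii) $\Rightarrow$ (iii) is the standard ``domination implies complexity comparison'' argument, where the only care needed beyond your sketch is the carry/boundary ambiguity in recovering $\rest{\beta}{m}$ from an interval of length $c\cdot 2^{-n}$, which costs $O(1)$ extra bits.

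The gap is in (i) $\Rightarrow$ (ii), precisely at the step you defer. Your plan issues one Kraft--Chaitin request per failure index $n$ (where $2^{c}(\alpha-a_n)<\beta-b_n$), describing $\rest{\alpha}{m}$ with $2^{-m}\approx\alpha-a_n$ by a code of length about $Tm-c$, i.e.\ of weight about $2^{c}(\alpha-a_n)^{T}$. But the failure condition bounds $\alpha-a_n$ only by the \emph{tail} $\beta-b_n=\sum_{k\ge n}(b_{k+1}-b_k)$, not by the single increment $b_{n+1}-b_n$; by subadditivity of $x\mapsto x^{T}$ for $T\le 1$ this gives $(\alpha-a_n)^{T}\lesssim\sum_{k\ge n}(b_{k+1}-b_k)^{T}$, and summing such tails over infinitely many failure indices diverges, so the Kraft inequality is not verified. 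Conversely, if you force the weight spent at index $n$ to be $(b_{n+1}-b_n)^{T}$ so that $T$-convergence bounds the total, the resulting code is too long to certify $H(\rest{\alpha}{m})<Tm-c$ for the $m$ with $2^{-m}\approx\alpha-a_n$, since $\alpha-a_n$ may vastly exceed $b_{n+1}-b_n$. The actual proof (the Ku\v{c}era--Slaman-type construction adapted in \cite{T09MFCS}) resolves this tension by amortization: at every stage $s$ one requests a description of a prefix of the \emph{current} approximation $a_s$, of length tied to the current $\beta$-increment, so that each increment is charged only boundedly often, and the failure of domination is used solely to argue that infinitely many of these requests happen to describe true prefixes of $\alpha$ with the required compression. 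Until that bookkeeping is carried out, the central implication of the theorem is asserted rather than proved.
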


\section{\boldmath Reducibility in Query Size $f$}
\label{elaboration}

In this section we investigate some properties of the notion of
reducibility
in query size $f$ and introduce the notions of unidirectionality and bidirectionality between two sets.

Note first that,
for every $A\subset\X$,
$A$ is reducible to $A$ in query size $n$,
where ``$n$'' denotes the identity function $I\colon \N\to \N$ with $I(n)=n$.
We follow the notation in computational complexity theory.
%

The following are simple observations on the notion of
reducibility
in query size $f$.

\begin{proposition}\label{observations}
Let $f\colon\N\to\N$ and $g\colon\N\to\N$, and let $A,B,C\subset\X$.
\begin{enumerate}
  \item If $A$ is reducible to $B$ in query size $f$
    and $B$ is reducible to $C$ in query size $g$,
    then $A$ is reducible to $C$ in query size $g\circ f$.
    \label{composition}
  \item Suppose that $f(n)\le g(n)$ for every $n\in\N$.
    If $A$ is reducible to $B$ in query size $f$
    then $A$ is reducible to $B$ in query size $g$.
    \label{longer}
  \item Suppose that $A$ is reducible to $B$ in query size $f$.
    If $A$ is not recursive then $f$ is unbounded.
    \qed
    \label{unbounded}
\end{enumerate}
\end{proposition}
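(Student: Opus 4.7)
The plan is to handle the three parts by direct constructions, proving (ii) first since (i) will rely on it. For part (ii), I would observe that the \emph{same} oracle Turing machine $M$ which witnesses that $A$ is reducible to $B$ in query size $f$ also witnesses reducibility in query size $g$: every query $M$ makes on input $x$ has length at most $f(\abs{x})$, which by hypothesis is at most $g(\abs{x})$.

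For part (i), given an oracle machine $M_1$ witnessing $A$ reducible to $B$ in query size $f$ and an oracle machine $M_2$ witnessing $B$ reducible to $C$ in query size $g$, I would build a composite oracle machine $M$ with oracle $C$ that, on input $x$, simulates $M_1$ on $x$ and intercepts each oracle query $y$ which $M_1$ issues to $B$; to answer such a query, $M$ runs $M_2$ on $y$ using the oracle $C$ and returns the resulting bit to $M_1$. Then $M$ is a Turing reduction of $A$ to $C$, and every string that $M$ actually submits to $C$ is submitted during some simulation of $M_2$ on an input $y$ with $\abs{y}\le f(\abs{x})$, so its length is at most $g(\abs{y})$. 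Assuming $g$ is nondecreasing (which we may arrange without loss of generality by invoking (ii) to replace $g$ with its monotone majorant $\tilde{g}(n)=\max_{k\le n}g(k)$), this yields the desired bound $g(f(\abs{x}))$.

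For part (iii) I would argue by contraposition. If $f$ is bounded above by some constant $N$, then on every input $x$ the witnessing machine $M$ only queries strings from the finite set $S=\{\,s\in\X\mid \abs{s}\le N\,\}$, so the computation of $M$ on any input depends only on the finite restriction $B\cap S$; by hardcoding this finite set into a lookup table and replacing each oracle call by a table lookup, one obtains an oracle-free Turing machine computing the characteristic function of $A$, whence $A$ is recursive. Overall each part is a routine construction; the only step requiring any care is the monotonicity remark inside (i), which is precisely why I would prove (ii) before (i).
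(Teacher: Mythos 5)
The paper offers no proof of this proposition at all---it is presented as a list of ``simple observations'' and left to the reader---and your three constructions are exactly the intended ones: re-use the same machine for (ii), compose the two machines (feeding each query of the outer machine to a simulation of the inner one) for (i), and hardcode the finite oracle restriction $B\cap\{s\mid\abs{s}\le N\}$ into a lookup table for (iii). Parts (ii) and (iii) are correct as written.

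The one step you yourself single out as ``requiring care'' in (i) is, however, handled incorrectly. Invoking (ii) to replace $g$ by its monotone majorant $\tilde{g}(n)=\max_{k\le n}g(k)$ shows that $A$ is reducible to $C$ in query size $\tilde{g}\circ f$; since $\tilde{g}(f(n))\ge g(f(n))$, this is a \emph{weaker} conclusion than the one claimed, and (ii) only ever lets you enlarge a query bound, never shrink it back down to $g\circ f$. Concretely, if $M_1$ on input $x$ queries some $y$ with $\abs{y}<f(\abs{x})$ while $g(\abs{y})>g(f(\abs{x}))$, your composite machine makes a query longer than $g(f(\abs{x}))$, and no appeal to (ii) repairs this. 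So the ``without loss of generality'' is not one. The honest reading is that part (i) tacitly presupposes $g$ nondecreasing---as it is in every application in the paper, where $g$ is an order function plus a constant---and you should either state that hypothesis explicitly or acknowledge that for general $g$ your argument only yields reducibility in query size $\tilde{g}\circ f$. Everything else in the proposal is fine.
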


\begin{definition}
An \textit{order function} is
a non-decreasing total recursive function $f\colon\N\to\N$
such that $\lim_{n\to\infty}f(n)=\infty$.
\qed
\end{definition}

Let $f$ be an order function.
Intuitively,
the notion of the reduction of $A$ to $B$ in query size $f$ is
equivalent to that,
for every $n\in\N$,
if $n$ and $\rest{B}{f(n)}$ are given,
then $\rest{A}{n}$ can be
calculated.
We introduce the notions of unidirectionality and bidirectionality between two sets
as follows.

\begin{definition}
Let $A,B\subset\X$.
We say that \textit{the computation from $A$ to $B$ is unidirectional} if the following holds:
For every order functions $f$ and $g$,
if $B$ is reducible to $A$ in query size $f$ and $A$ is reducible to $B$ in query size $g$
then the function $g(f(n))-n$ of $n\in\N$ is unbounded.
We say that \textit{the computations between $A$ and $B$ are bidirectional}
if the computation from $A$ to $B$ is not unidirectional and
the computation from $B$ to $A$ is not unidirectional.
\qed
\end{definition}

The notion of unidirectionality of the computation from $A$ to $B$ in the above definition
is, in essence, interpreted as follows:
No matter how a order function $f$ is chosen,
if $f$ satisfies that
$\rest{B}{n}$ can be calculated from $n$ and $\rest{A}{f(n)}$,
then $\rest{A}{f(n)}$ cannot be calculated from $n$ and $\rest{B}{n+O(1)}$.

%
In order to apply the notion of
reducibility
in query size $f$ to a real, we introduce the notion of prefixes of a real as follows.

\begin{definition}%
For each $\alpha\in\R$,
the \textit{prefixes} $\Pf(\alpha)$ of $\alpha$ is the subset of $\X$
defined by $\Pf(\alpha)=\{\rest{\alpha}{n}\mid n\in\N\}$.
\qed
\end{definition}

The notion of prefixes of a real is a natural notion in AIT.
For example,
the notion of weak Chaitin randomness of a real $\alpha$
can be rephrased as that
there exists $d\in\N$ such that, for every $x\in\Pf(\alpha)$,
$\abs{x}\le H(x)+d$.
The following proposition is a restatement of
the well-known fact that,
for every optimal prefix-free machine $V$,
the first $n$ bits of the base-two expansion of $\Omega_V$ solve
the halting problem of $V$ for inputs of
length at most $n$.

\begin{proposition}\label{DomVrdPfOVqsn}
Let $V$ be an optimal prefix-free machine.
Then $\Dom V$ is reducible to $\Pf(\Omega_V)$ in query size $n$.
\qed
\end{proposition}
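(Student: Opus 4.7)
The plan is to build an oracle Turing machine $M$ that, on input $x$ with $n=\abs{x}$, first recovers $\rest{\Omega_V}{n}$ using only queries to $\Pf(\Omega_V)$ of length at most $n$, and then uses this prefix to decide membership in $\Dom V$ for strings of length $\le n$ by the standard ``wait-until-enough-weight-is-captured'' argument.

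First I would compute $\rest{\Omega_V}{n}$ inductively as follows. Set $y_0:=\lambda$, which equals $\rest{\Omega_V}{0}$ and lies in $\Pf(\Omega_V)$ trivially. Given $y_k=\rest{\Omega_V}{k}$, query the oracle on $y_k 0$ and $y_k 1$; since $\Pf(\Omega_V)$ contains exactly one string of each length, precisely one of the two queries returns \textbf{yes}, and this string is $y_{k+1}=\rest{\Omega_V}{k+1}$. After $k=0,1,\dots,n-1$ we obtain $y_n=\rest{\Omega_V}{n}$. All queries issued have length between $1$ and $n$, so in particular at most $n=\abs{x}$.

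Next, let $\omega_n\in\Q$ denote the rational whose base-two expansion is $0.y_n$. Since $\Omega_V$ is weakly Chaitin random, hence irrational, we have $\omega_n<\Omega_V<\omega_n+2^{-n}$. The machine $M$ now dovetails a simulation of $V$ on all inputs in $\X$, maintaining the increasing rational $\Omega_V^{(s)}=\sum\{\,2^{-\abs{p}}\mid V\text{ has halted on }p\text{ by stage }s\,\}$. Since $\Omega_V^{(s)}\to\Omega_V>\omega_n$, some stage $s_0$ satisfies $\Omega_V^{(s_0)}>\omega_n$, and $M$ waits for the first such $s_0$. I claim that at stage $s_0$ every $p\in\Dom V$ with $\abs{p}\le n$ has already been enumerated: otherwise $\Omega_V\ge\Omega_V^{(s_0)}+2^{-\abs{p}}\ge\omega_n+2^{-n}$, contradicting $\Omega_V<\omega_n+2^{-n}$. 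Therefore $M$ can decide whether $x\in\Dom V$ by checking whether $x$ appears among the inputs on which $V$ has halted by stage $s_0$.

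This shows that $\Dom V$ is Turing reducible to $\Pf(\Omega_V)$ via an oracle machine whose queries on input $x$ are all of length at most $\abs{x}$, which is the definition of reducibility in query size $n$. There is no real obstacle here beyond being careful that the prefix-recovery phase only issues queries of length $\le n$ and that the inequality $\Omega_V<\omega_n+2^{-n}$ is strict enough to rule out any late-enumerated program of length $\le n$; both are immediate.
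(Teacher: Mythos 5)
Your proof is correct and is precisely the standard argument behind the "well-known fact" that the paper invokes for this proposition (which it states without proof): recover $\rest{\Omega_V}{n}$ with queries of length at most $n$, then enumerate $\Dom V$ until the accumulated halting probability exceeds $0.(\rest{\Omega_V}{n})$, at which point all halting programs of length at most $n$ have appeared. Both the prefix-recovery phase and the strictness of $\Omega_V<0.(\rest{\Omega_V}{n})+2^{-n}$ are handled correctly, so nothing is missing.
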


\section{Unidirectionality}
\label{one-wayness}

In this section we show the unidirectionality between $\Omega_U$ and $\Dom U$ in a general setting.
Theorems~\ref{one-way I} and \ref{one-way II} below are two of the main results of this paper.

\begin{theorem}[
elaboration of $\Omega_U\le_{wtt}\Dom U$]
\label{one-way I}
Let $V$ and $W$ be optimal prefix-free machines, and
let $f$ be an order function.
Then the following two conditions are equivalent:
\begin{enumerate}
  \item $\Pf(\Omega_V)$ is reducible to $\Dom W$ in query size $f(n)+O(1)$.
  \item $\sum_{n=0}^\infty 2^{n-f(n)}<\infty$.\qed
\end{enumerate}
\end{theorem}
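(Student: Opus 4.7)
The plan is to prove both directions via Kraft inequality arguments, making central use of the Kraft--Chaitin (bounded request) theorem, the optimality of $W$, and the r.e.\ character of $\Omega_V$ (so that ``$0.y1^\infty<\Omega_V$'' is an r.e.\ predicate in the variables $(n,y)$ with $|y|=n$).

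For (ii) $\Rightarrow$ (i), I would first apply the Kraft--Chaitin theorem: the hypothesis $\sum 2^{n-f(n)}<\infty$ is exactly the Kraft inequality for lengths of the form $f(n)-n+d$, so it yields a computable prefix-free code $\{c_n\}_{n\in\N}$ with $|c_n|=f(n)-n+d$. I would then construct an auxiliary prefix-free machine $D$ whose inputs have the form $c_n y$ with $|y|=n$; on such an input $D$ enumerates $\Omega_V$ from below and halts precisely when the lower approximation exceeds $0.y1^\infty$. Prefix-freeness of $\{c_n y\}$ and the r.e.\ character of the halting condition make $D$ a well-defined prefix-free machine, and $c_n y\in\Dom D$ iff $0.y1^\infty<\Omega_V$. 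By optimality of $W$, there is a constant $e$ and a computable map $(n,y)\mapsto q_{n,y}\in\X$ such that $|q_{n,y}|\le f(n)+e$ and $q_{n,y}\in\Dom W$ iff $c_n y\in\Dom D$. The reduction then proceeds as follows: on input $n$, compute $q_{n,y}$ for each $y\in\{0,1\}^n$, query $\rest{\Dom W}{f(n)+e}$ to decide membership, and output $\rest{\Omega_V}{n}$ as the least $y\in\{0,1\}^n$ with $q_{n,y}\notin\Dom W$.

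For (i) $\Rightarrow$ (ii), I would mirror the above. The reduction of $\Pf(\Omega_V)$ to $\Dom W$ in query size $f(n)+c$ makes the predicate ``$0.y1^\infty<\Omega_V$'' decidable from $\rest{\Dom W}{f(n)+c}$. Using this and the optimality of $W$, the goal is to produce, for each pair $(n,y)$ with $0.y1^\infty<\Omega_V$, a halting $W$-program $p_{n,y}\in\Dom W$ of length at most $f(n)+O(1)$, arranged so that distinct pairs give distinct programs. Kraft's inequality for the prefix-free set $\Dom W$ then yields
\[
  \Omega_W \;\ge\; \sum_{n}\;\sum_{\substack{|y|=n\\0.y1^\infty<\Omega_V}} 2^{-|p_{n,y}|} \;\ge\; 2^{-O(1)}\sum_{n}\bigl\lfloor 2^n\Omega_V\bigr\rfloor\cdot 2^{-f(n)}.
\]
Since $\Omega_V>0$, there is a constant $k$ with $\bigl\lfloor 2^n\Omega_V\bigr\rfloor \ge 2^{n-k}$ for all sufficiently large $n$, and the conclusion $\sum 2^{n-f(n)}<\infty$ then follows.

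The main obstacle is constructing the family $\{p_{n,y}\}\subseteq\Dom W$ in the direction (i) $\Rightarrow$ (ii) with the tight length bound $f(n)+O(1)$. A naive encoding of $(n,y)$ as input to an intermediate prefix-free machine would require a prefix-free code for $n$ of length $f(n)-n+O(1)$, whose existence via Kraft--Chaitin would require the very summability we are trying to prove. The plan is to avoid this circularity by constructing the $p_{n,y}$'s directly through the oracle-query structure of the reduction: the ``yes'' queries issued by $M$ on input $n$ already inhabit $\rest{\Dom W}{f(n)+c}$, and, together with the computation of $M$, can be used to manufacture a distinct $W$-program of length $f(n)+O(1)$ for each admissible pair $(n,y)$. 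Making this assignment precise so that the resulting $p_{n,y}$'s are genuinely distinct elements of $\Dom W$ is the technical heart of the proof.
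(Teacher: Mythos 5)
Your direction (ii) $\Rightarrow$ (i) is essentially the paper's argument: Kraft--Chaitin supplies the prefix code $c_n$ of length $f(n)-n+O(1)$, the auxiliary machine $D$ with domain $\{c_ny : 0.y1^\infty<\Omega_V\}$ is the paper's machine $F$, and the passage from $\Dom D$ to $\Dom W$ is handled by optimality. One imprecision: optimality does not give you a \emph{computable} map $(n,y)\mapsto q_{n,y}$ with $q_{n,y}\in\Dom W$ \emph{iff} $c_ny\in\Dom D$; what is true (and what suffices) is that $\Dom D$ is reducible to $\Dom W$ in query size $n+O(1)$ (Theorem~\ref{op-p}), after which you compose reductions via Proposition~\ref{observations}~(i).

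Direction (i) $\Rightarrow$ (ii) has a genuine gap, and you have correctly located it: the existence of the family $\{p_{n,y}\}$ is not a technicality you can defer, it is the whole problem, and the route you sketch for producing it does not work. The reduction machine, on an input of length $n$, merely \emph{reads} $\rest{\Dom W}{f(n)+c}$; its ``yes'' queries may be very few and may coincide across all $y\in\{0,1\}^n$, so the query structure does not manufacture $\approx 2^n\Omega_V$ \emph{distinct} elements of $\Dom W$ of length $f(n)+O(1)$ per level $n$. Any attempt to \emph{create} such programs (rather than find them already in $\Dom W$) runs through Kraft--Chaitin and hence through the very summability you are proving, as you note. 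The paper's proof (Theorem~\ref{I-random}) avoids counting programs entirely. It uses two different tools: (a) Theorem~\ref{time}, which says that a longest-running halting input $p_m$ of length at most $m$ satisfies $H(m,p_m)=m+O(1)$ and that $\rest{\Dom W}{m}$ is computable from $(m,p_m)$; applied with $m=f(n)$, together with the reduction, this yields $H(\rest{\Omega_V}{n})\le f(n)+O(1)$ (along a suitable subsequence $h(k)$ chosen where $f$ strictly increases, to make $k\mapsto f(h(k))$ injective); and (b) the Ample Excess Lemma of Miller and Yu, which converts the weak Chaitin randomness of $\Omega_V$ into the \emph{summable} statement $\sum_n 2^{n-H(\reste{\Omega_V}{n})}<\infty$, whence $\sum_k 2^{h(k)-f(h(k))}<\infty$ and then, by a telescoping estimate using the constancy of $f$ between consecutive points of $S_f=\{n:f(n)<f(n+1)\}$, the full sum $\sum_n 2^{n-f(n)}<\infty$. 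Note that step (b) is indispensable: even granting $H(\rest{\Omega_V}{n})\le f(n)+O(1)$, plain weak Chaitin randomness only yields $n\le f(n)+O(1)$, which is strictly weaker than condition (ii) (compare the remark after Theorem~\ref{one-way I} about $f(n)=n+\lfloor\log_2 n\rfloor$). Your counting scheme was an attempt to reach summability directly, which is precisely why it demands the unavailable $2^n$-fold family of witnesses.
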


Theorem~\ref{one-way I} is proved in Subsection~\ref{Proof_of_Theorem_one-way I} below.
Theorem~\ref{one-way I} corresponds to Theorem 4 of Tadaki \cite{T09CiE},
and is proved by modifying the proof of Theorem 4 of \cite{T09CiE}.
Let $V$ and $W$ be optimal prefix-free machines.
The implication (ii) $\Rightarrow$ (i) of Theorem~\ref{one-way I} results in,
for example,
that $\Pf(\Omega_V)$ is reducible to $\Dom W$
in query size $n+\lfloor(1+\varepsilon)\log_2 n \rfloor+O(1)$
for every
real $\varepsilon>0$.
On the other hand,
the implication (i) $\Rightarrow$ (ii) of Theorem~\ref{one-way I} results in,
for example,
that $\Pf(\Omega_V)$ is not reducible to $\Dom W$
in query size $n+\lfloor\log_2 n \rfloor+O(1)$
and therefore, in particular,
$\Pf(\Omega_V)$ is not reducible to $\Dom W$ in query size $n+O(1)$.

\begin{theorem}[
elaboration of $\Dom U\le_{wtt}\Omega_U$]
\label{one-way II}
Let $V$ and $W$ be optimal prefix-free machines, and
let $f$ be an order function.
Then the following two conditions are equivalent:
\begin{enumerate}
  \item $\Dom W$ is reducible to $\Pf(\Omega_V)$ in query size $f(n)+O(1)$.
  \item
    $n\le f(n)+O(1)$.
    \qed
\end{enumerate}
\end{theorem}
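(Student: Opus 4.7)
The plan is to prove each direction separately.

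For $(ii)\Rightarrow(i)$, I would compose three reductions. First, Proposition~\ref{DomVrdPfOVqsn} applied to $W$ gives $\Dom W$ reducible to $\Pf(\Omega_W)$ in query size $n$. Second, since $\Omega_V$ is weakly Chaitin random and every r.e.~real is $1$-convergent, Theorem~\ref{partial randomness} (case $T=1$) makes $\Omega_V$ an $\Omega(1)$-like real, so $\Omega_V$ dominates $\Omega_W$: there exist computable increasing sequences $\{a_k\},\{b_k\}$ of rationals with limits $\Omega_V,\Omega_W$ and some $c\in\N^+$ such that $c(\Omega_V-a_k)\ge\Omega_W-b_k$ for every $k$. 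From this, given $\rest{\Omega_V}{n+\lceil\log_2 c\rceil+O(1)}$ I enumerate until $a_k$ exceeds the rational value represented by that prefix, obtaining $\Omega_V-a_k<2^{-(n+\lceil\log_2 c\rceil+O(1))}$ and hence $\Omega_W-b_k<2^{-(n+O(1))}$, which is enough to decode $\rest{\Omega_W}{n}$; thus $\Pf(\Omega_W)$ is reducible to $\Pf(\Omega_V)$ in query size $n+O(1)$. Composing via Proposition~\ref{observations}(\ref{composition}) yields $\Dom W$ reducible to $\Pf(\Omega_V)$ in query size $n+O(1)$, and Proposition~\ref{observations}(\ref{longer}) together with the hypothesis $n\le f(n)+O(1)$ completes the direction.

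For $(i)\Rightarrow(ii)$, suppose $\Dom W$ is reducible to $\Pf(\Omega_V)$ via an oracle TM $M$ with query bound $g(n):=f(n)+O(1)$. Fix a deterministic TM $\tilde M$ computing $W$; by Theorem~\ref{time}, $H(p)=n+O(1)$ for every $p\in I_{\tilde M}^n$ with $n\ge L_{\tilde M}$. For each $m$, set $n_m:=\max\{k\in\N:g(k)\le m\}$, which is finite and computable from $m$ because $g$ is a non-decreasing, unbounded, recursive function. The key observation is that from $\rest{\Omega_V}{m}$ \emph{alone}---the input length $m$ already recovers $n_m$---one can simulate $M$ on every $x\in\{0,1\}^{\le n_m}$ (all queries have length at most $g(n_m)\le m$) to compute $\rest{\Dom W}{n_m}$, and then extract a canonical $p_m\in I_{\tilde M}^{n_m}$. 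The goal is then to establish $H(p_m)\le m+O(1)$; combined with $n_m-O(1)\le H(p_m)$ from Theorem~\ref{time} this gives $n_m\le g(n_m)+O(1)$, and since $n\le n_{g(n)}$ and $g(n)=g(n_{g(n)})$ for every $n$, we conclude $n\le g(n)+O(1)=f(n)+O(1)$.

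The main obstacle is achieving the tight bound $H(p_m)\le m+O(1)$. A direct prefix-free encoding of the pair $(n_m,\rest{\Omega_V}{m})$ costs an extra $H(n_m)=O(\log m)$ bits, which would yield only $n\le f(n)+O(\log n)$. To save that logarithm I would apply the Kraft--Chaitin theorem to a carefully bookkept r.e.~enumeration of candidate description-length pairs generated as the r.e.~approximation $\omega_t^V\nearrow\Omega_V$ stabilizes, exploiting the fact that $m$ itself already identifies $n_m$ so that no separate encoding of $n_m$ is needed on top of $\rest{\Omega_V}{m}$ and the added constant can be made uniform in $m$. Controlling the total Kraft weight despite the fact that $\rest{\omega_t^V}{m}$ may be revised many times along the way is the delicate step; the argument parallels the technique used for Theorem~\ref{one-way I} inherited from~\cite{T09CiE}.
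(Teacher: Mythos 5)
Both directions of your proposal contain genuine gaps. In the direction (ii) $\Rightarrow$ (i), the step asserting that $\Pf(\Omega_W)$ is reducible to $\Pf(\Omega_V)$ in query size $n+O(1)$ is not established by the domination argument you give. Knowing $b_{k}\le\Omega_W<b_{k}+2^{-(n+2)}$ does \emph{not} suffice to decode $\rest{\Omega_W}{n}$: the interval $[b_{k},b_{k}+2^{-(n+2)})$ may straddle a dyadic point $j2^{-n}$, and since every $b_{k'}$ lies below $\Omega_W$ you can eventually confirm $\Omega_W>j2^{-n}$ but can never confirm $\Omega_W<j2^{-n}$, so the procedure need not terminate with the correct prefix. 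The paper faces exactly this carry problem in Theorem~\ref{DomVTn-reT} and resolves it only via Theorem~\ref{bounded-run}, which requires strict $T$-compressibility; that is unavailable here, since $\Omega_W$ is weakly Chaitin random and hence not strictly $1$-compressible. The intended route avoids $\Pf(\Omega_W)$ entirely: compose Theorem~\ref{op-p} ($\Dom W$ reducible to $\Dom V$ in query size $n+d$) with Proposition~\ref{DomVrdPfOVqsn} ($\Dom V$ reducible to $\Pf(\Omega_V)$ in query size $n$), then apply Proposition~\ref{observations} (ii) using $n\le f(n)+O(1)$.

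In the direction (i) $\Rightarrow$ (ii), your entire argument rests on the unproved bound $H(p_m)\le m+O(1)$, which by Theorem~\ref{time} is essentially equivalent to the desired conclusion $n_m\le m+O(1)$; deferring it to an unspecified Kraft--Chaitin construction leaves the core of the proof missing. Moreover, the bookkeeping you sketch has a concrete convergence obstacle: since $\omega^V_t$ increases to $\Omega_V$, the prefix $\rest{\omega^V_t}{m}$ can be revised on the order of $2^{m}$ times, so issuing a weight-$2^{-m}$ request per revision yields total weight $\sum_m 2^{-m}\cdot 2^{m}=\infty$, and you indicate no finer accounting; the $O(\log m)$ overhead you correctly identify cannot be removed by encoding $(n_m,\rest{\Omega_V}{m})$ more cleverly. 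The paper's proof needs no tight upper bound on $H(p_m)$ at all. It first diagonalizes: from $(n,\rest{\Omega_V}{f(n)+c})$ one computes $\rest{\Dom W}{n}$ and hence a string $s$ with $H_W(s)>n$, giving $n<H(n,\rest{\Omega_V}{f(n)+c})+O(1)$. Then, assuming $n-f(n)$ unbounded, it selects a subsequence $n=g(k)$ along which $n$ is recoverable from the \emph{length} $m(k)=f(g(k))+c$ of the oracle prefix alone, so that $g(k)-f(g(k))<H(m(k))+O(1)$, and finally reaches the contradiction $l\le H(\Theta(l))\le 2\log_2 l+O(1)$ for a total recursive $\Theta$. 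You would need either to adopt an argument of this kind or to actually supply the missing lemma; as written, the proposal does not prove the theorem.
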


Theorem~\ref{one-way II} is proved in Subsection~\ref{Proof_of_Theorem_one-way II} below.
Theorem~\ref{one-way II} corresponds to Theorem 11 of Tadaki \cite{T09CiE},
and is proved by modifying the proof of Theorem 11 of \cite{T09CiE}.
The implication (ii) $\Rightarrow$ (i) of Theorem~\ref{one-way II} results in
that, for every optimal prefix-free machines $V$ and $W$,
$\Dom W$ is reducible to $\Pf(\Omega_V)$ in query size $n+O(1)$.
On the other hand,
the implication (i) $\Rightarrow$ (ii) of Theorem~\ref{one-way II} says that
this upper bound ``$n+O(1)$'' of the query size is,
in essence, tight.

\begin{theorem}\label{one-wayness_in_both_directions}
Let $V$ and $W$ be optimal prefix-free machines.
Then the computation from $\Pf(\Omega_V)$ to $\Dom W$ is unidirectional
and the computation from $\Dom W$ to $\Pf(\Omega_V)$ is also unidirectional.
\end{theorem}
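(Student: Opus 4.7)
The plan is to reduce the theorem directly to the two equivalent characterizations already established in Theorem~\ref{one-way I} and Theorem~\ref{one-way II}, and then to combine them through a short analytic observation. The single analytic ingredient I would use repeatedly is: for any order function $h$, if $\sum_{n=0}^{\infty} 2^{n-h(n)} < \infty$ then $h(n)-n \to \infty$. This is immediate, since otherwise some infinite subsequence would satisfy $h(n)-n \le K$ and contribute infinitely many terms of size at least $2^{-K}$ to the series, contradicting convergence.

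For the first claim, that the computation from $\Pf(\Omega_V)$ to $\Dom W$ is unidirectional, I would fix order functions $f$ and $g$ such that $\Dom W$ is reducible to $\Pf(\Omega_V)$ in query size $f$ and $\Pf(\Omega_V)$ is reducible to $\Dom W$ in query size $g$. Applying Theorem~\ref{one-way II} to $f$ (the bound $f(n)$ is trivially of the form $f(n)+O(1)$) yields a constant $c$ with $f(n) \ge n - c$ for every $n$; applying Theorem~\ref{one-way I} to $g$ yields $\sum_{n=0}^\infty 2^{n-g(n)} < \infty$, whence $g(n)-n \to \infty$ by the observation above. Since $g$ is non-decreasing,
\[
g(f(n)) - n \;\ge\; g(n-c) - n \;=\; \bigl(g(n-c) - (n-c)\bigr) - c \;\longrightarrow\; \infty,
\]
so in particular this quantity is unbounded, which is the required conclusion.

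The second claim, that the computation from $\Dom W$ to $\Pf(\Omega_V)$ is unidirectional, is symmetric. Fixing order functions $f, g$ with $\Pf(\Omega_V)$ reducible to $\Dom W$ in query size $f$ and $\Dom W$ reducible to $\Pf(\Omega_V)$ in query size $g$, Theorem~\ref{one-way I} applied to $f$ now gives $f(n)-n \to \infty$ (by the same analytic observation), while Theorem~\ref{one-way II} applied to $g$ gives $g(n) \ge n - c'$ for some constant $c'$. Hence $g(f(n)) - n \ge (f(n) - n) - c' \to \infty$.

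The main obstacle, such as it is, lies not in any hard estimate but in correctly matching each reducibility hypothesis with the right direction of the appropriate characterization and in verifying that the $O(1)$ slack appearing in condition (i) of each theorem is absorbed harmlessly. Both of the characterizing conditions (ii) in Theorems~\ref{one-way I} and \ref{one-way II} are invariant under adding a constant to $f$, so this is automatic, and the argument reduces to the elementary series fact stated at the outset.
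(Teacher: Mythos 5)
Your proposal is correct and follows essentially the same route as the paper's proof: apply the implication (i) $\Rightarrow$ (ii) of Theorem~\ref{one-way II} to the reduction into $\Pf(\Omega_V)$ to get the bound $n-c$, apply the implication (i) $\Rightarrow$ (ii) of Theorem~\ref{one-way I} to the reduction into $\Dom W$ to get $\sum_n 2^{n-h(n)}<\infty$ and hence $h(n)-n\to\infty$, and compose using monotonicity of the order function. The only difference is cosmetic relabeling of $f$ and $g$ in the second half, plus your explicit justification of the elementary series fact that the paper states without proof.
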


\begin{proof}
Let $V$ and $W$ be optimal prefix-free machines.
For arbitrary order functions $f$ and $g$,
assume that $\Dom W$ is reducible to $\Pf(\Omega_V)$ in query size $f$
and $\Pf(\Omega_V)$ is reducible to $\Dom W$ in query size $g$.
It follows from the implication (i) $\Rightarrow$ (ii) of Theorem~\ref{one-way II}
that there exists $c\in\N$ for which $n\le f(n)+c$ for all $n\in\N$.
On the other hand,
it follows from the implication (i) $\Rightarrow$ (ii) of Theorem~\ref{one-way I}
that $\sum_{n=0}^\infty 2^{n-g(n)}<\infty$ and
therefore $\lim_{n\to\infty}g(n)-n=\infty$.
Since $g$ is an order function, we have $g(f(n))-n\ge g(n-c)-(n-c)-c$
for all $n\ge c$.
Thus, the computation from $\Pf(\Omega_V)$ to $\Dom W$ is unidirectional.
On the other hand,
we have $f(g(n))-n\ge g(n)-n-c$ for all $n\in\N$.
Thus,
the computation from $\Dom W$ to $\Pf(\Omega_V)$ is unidirectional.
\end{proof}

\subsection{The Proof of Theorem~\ref{one-way I}}
\label{Proof_of_Theorem_one-way I}

Theorem~\ref{one-way I} follows from
Theorem~\ref{I-re} and Theorem~\ref{I-random} below,
and
the fact that $\Omega_V$ is a weakly Chaitin random r.e.~real
for every optimal prefix-free machine $V$.

\begin{theorem}\label{I-re}
Let $\alpha$ be an r.e.~real,
and let $V$ be an optimal prefix-free machine.
For every total recursive function $f\colon \N\to \N$,
if $\sum_{n=0}^\infty 2^{n-f(n)}<\infty$,
then there exists $c\in\N$ such that
$\Pf(\alpha)$ is reducible to $\Dom V$ in query size $f(n)+c$.
\qed
\end{theorem}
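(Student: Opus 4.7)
The plan is to build a prefix-free machine $F$ whose halting behavior, on a carefully chosen set of inputs, encodes for each $n$ the integer $\alpha_n$ (where $\rest{\alpha}{n}$ is the $n$-bit binary representation of $\alpha_n$), and then transfer halting from $F$ to $V$ via the optimality of $V$. First I will fix a computable strictly increasing sequence $\{a_m\}$ of rationals with $a_m\to\alpha$. We may assume $\alpha$ is non-computable (otherwise $\Pf(\alpha)$ is recursive and any $M$ that ignores the oracle witnesses the reduction), and hence non-dyadic. Then for each $n\in\N$ and each integer $k$ with $0\le k<2^n$, the condition $k<\alpha_n$ is equivalent to $\alpha>k/2^n$, which is r.e.\ in the sequence $\{a_m\}$.

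Next, by hypothesis I pick $c_0\in\N$ with $2^{c_0}\ge\sum_{n=0}^\infty 2^{n-f(n)}$, so that $\sum_{n=0}^\infty 2^n\cdot 2^{-(f(n)+c_0)}\le 1$. Using this Kraft inequality together with a standard effective greedy construction (available because $f$ is total recursive), I assemble a prefix-free set $\{p_{n,k}:n\in\N,\,0\le k<2^n\}$ with $\abs{p_{n,k}}=f(n)+c_0$, computable uniformly from $(n,k)$. Then I define a partial recursive $F\colon\X\to\X$ that on input $p$ searches for a triple $(n,k,m)$ satisfying $p=p_{n,k}$ and $a_m>k/2^n$, halting (with output $\lambda$, say) upon success. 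Since $\Dom F\subseteq\{p_{n,k}\}$ is prefix-free, $F$ is a prefix-free machine, and by the above r.e.\ equivalence $\Dom F=\{p_{n,k}:0\le k<\alpha_n\}$.

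Finally, by optimality of $V$ there exist $d\in\N$ and a fixed $\sigma\in\{0,1\}^d$ (a self-delimiting description of $F$ in the universal simulation defining $V$) such that, for every $q\in\X$, $\sigma q\in\Dom V$ iff $q\in\Dom F$. Setting $c=c_0+d$, the oracle machine $M$ witnessing the reduction operates as follows on input $x$: it computes $n=\abs{x}$, queries $\Dom V$ at each of the $2^n$ strings $\sigma p_{n,k}$ for $k=0,1,\dots,2^n-1$, forms $K=\{k:\sigma p_{n,k}\in\Dom V\}=\{0,1,\dots,\alpha_n-1\}$, reads off $\alpha_n=\abs{K}$, reconstructs $\rest{\alpha}{n}$ as the $n$-bit expansion of $\alpha_n$, and outputs $1$ iff $x=\rest{\alpha}{n}$. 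Each query has length $f(n)+c$, which is the desired bound.

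The one place the summability hypothesis $\sum 2^{n-f(n)}<\infty$ is genuinely used is in the Kraft-style effective prefix-free assignment of the addresses $p_{n,k}$ at their prescribed lengths $f(n)+c_0$; this is the main technical obstacle. Everything else is routine: the r.e.\ characterization of $k<\alpha_n$, the construction of $F$ from the $p_{n,k}$'s, and the bookkeeping translation through optimality of $V$ are all standard.
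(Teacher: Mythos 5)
Your construction of $F$ is essentially the paper's: both proofs spend the hypothesis $\sum_n 2^{n-f(n)}<\infty$ through the Kraft--Chaitin theorem to allocate, for each $n$, a prefix-free block of $2^n$ codewords of length $f(n)+O(1)$ whose halting pattern encodes $\rest{\alpha}{n}$ via the left-approximation of the r.e.\ real $\alpha$ (the paper takes codewords $g(n)s$ with $\abs{g(n)}=f(n)-n+d_0$ and $s\in\{0,1\}^n$, putting $g(n)s\in\Dom F$ iff $s\le\rest{\alpha}{n}$; your indexing by pairs $(n,k)$ is the same device). Up to an off-by-one in your r.e.\ criterion (for non-dyadic $\alpha\in(0,1)$, $k<\alpha_n$ is equivalent to $\alpha>(k+1)/2^n$, not to $\alpha>k/2^n$, which instead characterizes $k\le\alpha_n$; this only shifts $\abs{K}$ by one and is harmless), this part is sound.

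The genuine gap is the transfer from $\Dom F$ to $\Dom V$. You assert that optimality of $V$ yields a fixed $\sigma\in\{0,1\}^d$ with $\sigma q\in\Dom V$ iff $q\in\Dom F$. That is a property of a universal machine \emph{constructed} by prefix simulation, not of an arbitrary optimal prefix-free machine: the definition of optimality used here only says that for each prefix-free $F$ there is $d$ such that every $p\in\Dom F$ admits some $q\in\Dom V$ with $V(q)=F(p)$ and $\abs{q}\le\abs{p}+d$. It constrains nothing about the shape of $\Dom V$, and in general no such $\sigma$ exists (the theorem is deliberately stated for an arbitrary optimal $V$). What you actually need --- that $\Dom F$ is reducible to $\Dom V$ in query size $n+d$ for some constant $d$ --- is true, but it is itself a theorem requiring an argument: this is exactly Theorem~\ref{op-p}, a restatement of a result from \cite{T09CiE}. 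Replacing your $\sigma$-prefix claim by an appeal to Theorem~\ref{op-p} and composing the two reductions via Proposition~\ref{observations}~(i) repairs the proof and yields query size $f(n)+c_0+d$, which is precisely how the paper concludes.
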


\begin{theorem}\label{I-random}
Let $\alpha$ be a real which is weakly Chaitin random,
and let $V$ be an optimal prefix-free machine.
For every order function $f$,
if $\Pf(\alpha)$ is reducible to $\Dom V$ in query size $f$
then $\sum_{n=0}^\infty 2^{n-f(n)}<\infty$.
\qed
\end{theorem}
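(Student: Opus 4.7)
The plan is to argue by contraposition: assume an oracle Turing machine $M$ witnesses the reducibility of $\Pf(\alpha)$ to $\Dom V$ in query size $f$, and suppose toward a contradiction that $\sum_{n=0}^\infty 2^{n-f(n)}=\infty$. Under these assumptions the goal is to show that $\alpha$ is not weakly Chaitin random, contradicting the standing hypothesis on $\alpha$.

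First, enumerate $\Dom V$ in stages $V_0\subseteq V_1\subseteq\cdots$, letting $p_s$ denote the halting program newly discovered at stage $s$. For each $n$ and $s$, define a candidate $y_n^{(s)}\in\{0,1\}^n$ as the unique string (if any) such that $M$, on input $y_n^{(s)}$ with oracle $V_s\cap\{0,1\}^{\le f(n)}$, halts and outputs $1$ using only queries of length $\le f(n)$. By the defining property of the reduction, $y_n^{(s)}$ stabilizes at $\rest{\alpha}{n}$ as soon as $V_s\cap\{0,1\}^{\le f(n)}=\rest{\Dom V}{f(n)}$. This gives an r.e.\ approximation process producing every prefix of $\alpha$ in the limit.

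The second step is to build a prefix-free machine $C$ via the Kraft--Chaitin theorem. At each stage $s$ and for each $n$ with $|p_s|\le f(n)$ and $y_n^{(s)}\ne y_n^{(s-1)}$, emit a request $(\ell_{s,n},y_n^{(s)})$ whose length $\ell_{s,n}$ is chosen so that (a) the total Kraft weight $\sum 2^{-\ell_{s,n}}$ is bounded using the prefix-free Kraft inequality $\sum_{p\in\Dom V}2^{-|p|}\le 1$, combined with a bookkeeping over which $n$ can be affected by each $p_s$; and (b) for the stable value $y_n^{(s^*(n))}=\rest{\alpha}{n}$, the request length $\ell_{s^*(n),n}$ is at most $n-g(n)+O(1)$ with $g(n)$ unbounded along an infinite subsequence precisely when $\sum 2^{n-f(n)}=\infty$. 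Optimality of $U$ then yields $H(\rest{\alpha}{n})\le n-g(n)+O(1)$ on that subsequence, contradicting weak Chaitin randomness of $\alpha$.

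The main obstacle is designing the lengths $\ell_{s,n}$ so that (a) and (b) hold simultaneously. A naive encoding that simply gives the latest halting $p_s$ of length $\le f(n)$ together with $n$ yields only $H(\rest{\alpha}{n})\le f(n)+O(\log n)$, implying merely $f(n)\ge n-O(\log n)$, which is strictly weaker than the claimed threshold. Reaching the sharp threshold $\sum 2^{n-f(n)}<\infty$ requires a finer stage-based aggregation in which each $p\in\Dom V$ contributes Kraft weight essentially $2^{-|p|}$ times a factor summable across all $n$ with $f(n)\ge|p|$, so that the prefix-free constraint on $\Dom V$ translates directly into the desired tail condition on $f$. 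This delicate accounting of the gap $f(n)-n$ against the Kraft weight of $\Dom V$ is the technical heart of the argument.
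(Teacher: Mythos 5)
There is a genuine gap here, and it sits exactly where you place the ``technical heart'': the delicate Kraft--Chaitin accounting is not carried out, and in fact the contrapositive strategy you outline cannot reach the sharp threshold. The condition $\sum_n 2^{n-f(n)}=\infty$ does \emph{not} imply that $f(n)-n$ is bounded above on a subsequence; the borderline case is $f(n)=n+\lfloor\log_2 n\rfloor$, for which the series diverges even though $f(n)\ge n$ for all $n$. In that case no request lengths of the form $n-g(n)+O(1)$ with $g$ unbounded can exist, and indeed no pointwise violation of weak Chaitin randomness occurs: a weakly Chaitin random real can perfectly well satisfy $H(\rest{\alpha}{n})\le n+H(n)+O(1)$ (as $\Omega$ itself does). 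So your step (b) is unobtainable, and the contradiction you aim for --- ``$\alpha$ is not weakly Chaitin random'' --- is simply false for the reals and functions at the boundary. Your own observation that the naive encoding only yields $f(n)\ge n-O(\log n)$ is the symptom of this: the obstruction is not a matter of finer bookkeeping but of the pointwise formulation of randomness being too weak.

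The paper's proof avoids this by working in the forward direction and, crucially, by invoking the Ample Excess Lemma of Miller and Yu (Theorem~\ref{AEL}), which upgrades weak Chaitin randomness to the \emph{summable} statement $\sum_{n}2^{n-H(\reste{\alpha}{n})}<\infty$. Concretely: one passes to the subsequence $h(0)<h(1)<\cdots$ of points where $f$ increases, uses a longest-running halting program $p_{f(n)}\in I_M^{f(n)}$ as an oracle substitute to get $H(\rest{\alpha}{n})\le H(n,p_{f(n)})+O(1)$, and applies Theorem~\ref{time} to conclude $H(\rest{\alpha}{h(k)})\le f(h(k))+O(1)$. Feeding this into the Ample Excess summation gives $\sum_k 2^{h(k)-f(h(k))}<\infty$, and an elementary computation (using that $f$ is constant between consecutive points of increase) converts this into $\sum_n 2^{n-f(n)}<\infty$. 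If you want to salvage your approach, you would need to replace ``contradict weak Chaitin randomness pointwise'' by ``contradict the Ample Excess inequality,'' i.e.\ show your request set forces $\sum_n 2^{n-H(\reste{\alpha}{n})}=\infty$; but at that point you are essentially reconstructing the paper's argument in contrapositive form, and the Kraft--Chaitin machinery becomes unnecessary.
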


We first prove Theorem~\ref{I-re}.
For that purpose,
we need Theorems~\ref{KC} and \ref{op-p} below.

\begin{theorem}[Kraft-Chaitin Theorem, Chaitin \cite{C75}]\label{KC}
Let $f\colon \N\to \N$ be a total recursive function
such that $\sum_{n=0}^\infty 2^{-f(n)}\le 1$.
Then there exists a total recursive function $g\colon \N\to \X$
such that
(i)
$g$ is an injection,
(ii) the set $\{\,g(n)\mid n\in\N\}$ is prefix-free, and
(iii) $\abs{g(n)}=f(n)$ for all $n\in\N$.
\qed
\end{theorem}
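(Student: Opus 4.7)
The plan is a greedy allocation in the infinite binary tree. Identify each finite binary string $s$ with the dyadic subinterval $I(s) = [0.s,\, 0.s + 2^{-|s|})$ of $[0,1)$, so that a prefix-free set of strings corresponds to a pairwise disjoint family of such intervals. I will process the requests $f(0), f(1), f(2), \dots$ in order, while maintaining a pool $\mathcal{A}_n \subset \X$ of pairwise prefix-incomparable ``available'' strings representing the portion of $[0,1)$ not yet allocated to a code word, starting from $\mathcal{A}_0 = \{\lambda\}$.

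At step $n$, find the \emph{longest} string $s \in \mathcal{A}_n$ satisfying $|s| \le f(n)$, set $g(n) := s0^{f(n)-|s|}$ (the leftmost length-$f(n)$ descendant of $s$), and update
\[
\mathcal{A}_{n+1} := \bigl(\mathcal{A}_n \setminus \{s\}\bigr) \cup \{\, s0^i 1 \mid 0 \le i < f(n)-|s| \,\}.
\]
Geometrically, the path from $s$ down to $g(n)$ takes the $0$-child at each of the $f(n)-|s|$ intermediate levels, and the sibling $1$-child at each such level is returned to the pool as a new available node. The whole procedure is a finite, uniform algorithm, so $g$ is total recursive; since distinct $g(n)$'s lie in disjoint subtrees of the binary tree, $g$ is injective, $\{g(n)\mid n\in\N\}$ is prefix-free, and $|g(n)| = f(n)$ by construction.

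Three invariants must be maintained throughout: (I1) $\mathcal{A}_n$ is prefix-free; (I2) $\sum_{s \in \mathcal{A}_n} 2^{-|s|} = 1 - \sum_{i<n} 2^{-f(i)}$; and (I3) the strings in $\mathcal{A}_n$ have pairwise distinct lengths. Invariants (I1) and (I2) follow routinely from the construction. Granting these, the algorithm never fails: if every $s \in \mathcal{A}_n$ had $|s| > f(n)$, then by (I3) the weight $\sum_{s \in \mathcal{A}_n} 2^{-|s|}$ would be a finite sum of distinct terms drawn from $\{2^{-l} : l > f(n)\}$, hence strictly less than $\sum_{l=f(n)+1}^{\infty} 2^{-l} = 2^{-f(n)}$; but the hypothesis $\sum_{n=0}^\infty 2^{-f(n)} \le 1$ combined with (I2) forces this same weight to be at least $2^{-f(n)}$, a contradiction.

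The main obstacle is invariant (I3). When we extract $s$ of length $l_s$ and reinsert one new string of each length $l_s + 1, l_s + 2, \dots, f(n)$, these new strings could collide in length with strings already sitting in $\mathcal{A}_n$ unless $s$ has been chosen with care. The ``longest available with $|s| \le f(n)$'' rule guarantees that no other $t \in \mathcal{A}_n$ satisfies $l_s < |t| \le f(n)$, so the lengths of the newly inserted strings fall into a range disjoint from the lengths still present in $\mathcal{A}_n \setminus \{s\}$, and the distinct-length property is preserved. Once (I3) is secured, the remaining verifications are pure bookkeeping, and the construction delivers the required total recursive function $g$.
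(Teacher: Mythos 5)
Your construction is correct: the greedy interval/tree allocation with the three invariants (prefix-freeness, exact weight accounting, and pairwise distinct lengths in the pool, secured by always splitting the longest admissible available node) is precisely the standard proof of the Kraft--Chaitin theorem, which the paper does not reprove but cites from Chaitin. The one edge case you leave implicit ($f(n)=0$) cannot occur, since the infinite sum of positive terms $\sum_n 2^{-f(n)}\le 1$ already forces $f(n)\ge 1$ for all $n$, so nothing is missing.
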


We refer to Theorem~\ref{weaksim} below from Tadaki~\cite{T09CiE}.
Theorem~\ref{op-p} is a restatement of
it.

\begin{theorem}[Tadaki \cite{T09CiE}]\label{weaksim}
Let $V$ be an optimal prefix-free machine.
Then, for every prefix-free machine $F$ there exists $d\in\N$ such that,
for every $p\in\X$,
if $p$ and the list of all halting inputs for $V$ of length
at most $\abs{p}+d$ are given,
then the halting problem of the input $p$ for $F$ can be solved.
\qed
\end{theorem}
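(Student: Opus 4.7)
The plan is to reduce the decision of $F$-halting on $p$ to a finitary computation on the given list $L = \{q \in \Dom V \mid \abs{q} \le \abs{p} + d\}$, by exploiting the optimality of $V$ applied to a suitably chosen auxiliary prefix-free machine derived from $F$.

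First, I would construct a prefix-free machine $G$ whose halting behavior is tied to that of $F$ but whose outputs identify the halting inputs themselves; a clean choice is $G(p) = p$ for $p \in \Dom F$ (undefined otherwise), so that $\Dom G = \Dom F$ is prefix-free and the outputs of $G$ coincide with its halting inputs. Applying the definition of optimality of $V$ to $G$ then yields a constant $d \in \N$ such that for every $p \in \Dom F$ there is $r_p \in \Dom V$ with $V(r_p) = p$ and $\abs{r_p} \le \abs{p} + d$; since $V$ is a function and distinct inputs $p$ force distinct outputs $V(r_p) = p$, the assignment $p \mapsto r_p$ is injective, and in particular every $F$-halting input $p'$ of length at most $\abs{p}$ corresponds to a distinct element $r_{p'} \in L$ with $V(r_{p'}) = p'$.

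Next I would describe the decision algorithm: given $p$ and $L$, compute $V(q)$ for every $q \in L$ (legitimate because each such $q$ is known to halt) to obtain the finite set $S = \{V(q) \mid q \in L\}$, and simulate $F$ on all inputs of length at most $\abs{p}$ in parallel, up to a stopping time $T$ to be specified. By the injective embedding above, the set of $F$-halting inputs of length at most $\abs{p}$ is finite of cardinality at most $\abs{L}$ and sits inside $S$; once the parallel $F$-simulation has been run for time $T$, this set can be identified by intersecting the resulting list of $F$-halts with $S$, and the algorithm returns ``yes'' iff $p$ lies in this intersection.

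The main obstacle is specifying an effective stopping time $T$ computable from $L$ that is large enough to catch every halting $F$-computation on inputs of length at most $\abs{p}$. My plan is to take $T := \max_{q \in L} T_V(q)$ and to invoke Theorem~\ref{time} to bound $T_F(p') \le T$ for every $p' \in \Dom F$ with $\abs{p'} \le \abs{p}$: the longest-running $V$-halts of length $\le \abs{p} + d$ are essentially incompressible, and by a careful choice of the simulating machine underlying the proof of optimality of $V$ on $G$, the $V$-computation corresponding to $r_{p'}$ takes at least as long as the $F$-computation on $p'$, yielding the desired bound. With this stopping time in place, the failure of $F$ to halt on $p$ within $T$ steps correctly certifies the ``no'' branch, completing the decision procedure.
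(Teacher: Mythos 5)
The reduction skeleton is right, and the first half of your argument (apply the optimality of $V$ to an auxiliary prefix-free machine $G$ with $\Dom G=\Dom F$ so as to embed the relevant halting information into the given list $L$) is the correct strategy; note that the present paper does not reprove this theorem but imports it from \cite{T09CiE}, whose proof follows that skeleton. The gap is in your stopping time. You set $T:=\max_{q\in L}T_V(q)$ and claim that ``by a careful choice of the simulating machine underlying the proof of optimality of $V$ on $G$'' one gets $T_V(r_{p'})\ge T_F(p')$. But $V$ is a \emph{fixed} machine given in the hypothesis: its optimality asserts only the existence of $r_{p'}\in\Dom V$ with $V(r_{p'})=p'$ and $\abs{r_{p'}}\le\abs{p'}+d$, and gives no control whatsoever over the running time of $V$ on $r_{p'}$. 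With your choice $G(p)=p$ the output $V(r_{p'})=p'$ is a short string, so nothing forces the $V$-computation to be slow; $T$ may be far smaller than $T_F(p)$, in which case your procedure answers ``no'' on a halting input. Theorem~\ref{time} does not rescue this step: it relates $n$, $H(n,p)$ and $H(p)$ for the slowest $V$-halts, and says nothing that bounds the running time of a \emph{different} machine $F$ by $\max_{q\in L}T_V(q)$.

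The standard repair is to force the simulating $V$-computations to be slow by padding the \emph{output} of $G$: take $\Dom G=\Dom F$ and $G(p)=1^{2\abs{p}+T_{M_F}(p)+1}$, where $M_F$ is a Turing machine computing $F$. If $V(q)=G(p)$ with $\abs{q}\le\abs{p}+d$, then any machine $M_V$ computing $V$ must spend more than $\abs{V(q)}-\abs{q}> T_{M_F}(p)+\abs{p}-d$ steps on $q$ merely writing the output, so for $\abs{p}\ge d$ one gets $T_{M_V}(q)>T_{M_F}(p)$ and your $T$ becomes a valid cutoff (the finitely many $p$ with $\abs{p}<d$ are hard-coded). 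This is exactly the device of Lemma~\ref{longer-running-time} in the appendix. An alternative that avoids timing altogether is to set $G(p_i)=i$ for a recursive enumeration $p_0,p_1,\dots$ of $\Dom F$: then $k:=\max\{\,V(q)\mid q\in L\,\}$ (under the identification of strings with naturals) dominates every index $i$ with $\abs{p_i}\le\abs{p}$, junk values of $V(q)$ only enlarging $k$ harmlessly, so one decides $p\in\Dom F$ by enumerating $p_0,\dots,p_k$ and checking membership; this is the trick used in the proof of Theorem~\ref{ZVThaltC}.
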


\begin{theorem}\label{op-p}
Let $V$ be an optimal prefix-free machine.
Then, for every prefix-free machine $F$
there exists $d\in\N$ such that
$\Dom F$ is reducible to $\Dom V$ in query size $n+d$.
\qed
\end{theorem}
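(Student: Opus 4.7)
The plan is to deduce Theorem~\ref{op-p} directly from Theorem~\ref{weaksim} by packaging the informal phrase ``given $p$ and the list of halting inputs for $V$ of length at most $\abs{p}+d$'' as an oracle computation with oracle $\Dom V$ whose queries are length-bounded in precisely the sense required by Definition~\ref{new reduction}. Concretely, fix an optimal prefix-free machine $V$ and an arbitrary prefix-free machine $F$, and apply Theorem~\ref{weaksim} to obtain a constant $d\in\N$ together with an algorithm $\mathcal{A}$ which, given $p$ and $\rest{(\Dom V)}{\abs{p}+d}$ as input, decides whether $p\in\Dom F$.

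I would then describe the oracle Turing machine $M^{\Dom V}$ that witnesses the reducibility as follows. On input $p\in\X$, $M$ enumerates the finite list of all strings $q\in\X$ with $\abs{q}\le\abs{p}+d$, queries the oracle $\Dom V$ on each such $q$, and records the answers; this yields exactly the finite set $\rest{(\Dom V)}{\abs{p}+d}$. The machine $M$ then simulates $\mathcal{A}$ on the pair $(p,\rest{(\Dom V)}{\abs{p}+d})$ and outputs its answer, i.e.\ the characteristic value of $\Dom F$ at $p$. By construction $M$ computes the characteristic function of $\Dom F$ using $\Dom V$ as oracle, so $\Dom F$ is Turing reducible to $\Dom V$ via $M$. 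Moreover every oracle query issued by $M$ on input $p$ has length at most $\abs{p}+d$, so setting $f(n)=n+d$ verifies clause (ii) of Definition~\ref{new reduction}, giving reducibility of $\Dom F$ to $\Dom V$ in query size $n+d$.

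Since every ingredient is already guaranteed by Theorem~\ref{weaksim}, there is essentially no substantive obstacle. The only point requiring care is the match between the informal phrasing of Theorem~\ref{weaksim} and the formal query-size bound of Definition~\ref{new reduction}: one must check that exhaustively querying all strings of length $\le\abs{p}+d$ is a legitimate oracle procedure (it is, since the number of such strings depends only on $\abs{p}$ and is finite), and that the resulting queries strictly respect the bound $f(\abs{p})=\abs{p}+d$. Thus Theorem~\ref{op-p} is, in effect, a notational repackaging of Theorem~\ref{weaksim} inside the reducibility-in-query-size framework.
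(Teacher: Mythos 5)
Your proposal is correct and matches the paper's treatment: the paper gives no separate proof of Theorem~\ref{op-p}, stating only that it ``is a restatement of'' Theorem~\ref{weaksim}, and your argument simply makes explicit the routine translation (exhaustively querying all strings of length at most $\abs{x}+d$ to reconstruct $\rest{(\Dom V)}{\abs{x}+d}$ and then running the decision procedure of Theorem~\ref{weaksim}) that justifies calling it a restatement.
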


Based on Theorems~\ref{KC} and \ref{op-p},
Theorem~\ref{I-re} is then proved as follows.

\begin{proof}[Proof of Theorem~\ref{I-re}]
Let $\alpha$ be an r.e.~real,
and let $V$ be an optimal prefix-free machine.
For an arbitrary total recursive function $f\colon \N\to \N$,
assume that $\sum_{n=0}^\infty 2^{n-f(n)}<\infty$.
In the case of $\alpha\in\Q$,
the result is obvious.
Thus,
in what follows,
we assume that $\alpha\notin\Q$ and therefore
the base-two expansion of
$\alpha - \lfloor \alpha \rfloor$ is unique
and contains infinitely many ones.

Since $\sum_{n=0}^\infty 2^{n-f(n)}<\infty$,
there exists $d_0\in\N$ such that
$\sum_{n=0}^\infty 2^{n-f(n)-d_0}\le 1$.
Hence, by the Kraft-Chaitin Theorem, i.e., Theorem~\ref{KC},
there exists a total recursive function
$g\colon \N\to \X$ such that
(i) the function $g$ is an injection,
(ii) the set $\{\,g(n)\mid n\in\N\}$ is prefix-free, and
(iii) $\abs{g(n)}=f(n)-n+d_0$ for all $n\in\N$.
On the other hand, since $\alpha$ is r.e.,
there exists a total recursive function $h\colon\N\to\Q$ such that
$h(k)\le\alpha$ for all $k\in\N$ and $\lim_{k\to\infty} h(k)=\alpha$.

Now,
let us consider a prefix-free machine $F$ such that,
for every $n\in\N$ and $s\in\X$,
$g(n)s\in\Dom F$ if and only if
(i) $\abs{s}=n$ and
(ii) $0.s<h(k)-\lfloor \alpha \rfloor$ for some $k\in\N$.
It is easy to see that such a prefix-free machine $F$ exists.
We then see that,
for every $n\in\N$ and $s\in\{0,1\}^n$,
\begin{equation}\label{decision-condition}
  g(n)s\in\Dom F
  \text{ if and only if }
  s\le \rest{\alpha}{n},
\end{equation}
where $s$ and $\rest{\alpha}{n}$ are
regarded as a dyadic integer.
Then, by the following procedure,
we see that
$\Pf(\alpha)$ is reducible to $\Dom F$ in query size $f(n)+d_0$.

Given $t\in\X$, based on the equivalence \eqref{decision-condition},
one determines $\rest{\alpha}{n}$
by putting the queries $g(n)s$ to the oracle $\Dom F$
for all $s\in\{0,1\}^n$, where $n=\abs{t}$.
Note here that
all the queries
are of length $f(n)+d_0$,
since $\abs{g(n)}=f(n)-n+d_0$.
One then accepts if $t=\rest{\alpha}{n}$ and rejects otherwise.

On the other hand, by Theorem~\ref{op-p},
there exists $d\in\N$ such that
$\Dom F$ is reducible to $\Dom V$ in query size $n+d$.
Thus, by Proposition~\ref{observations} (i),
$\Pf(\alpha)$ is reducible to $\Dom V$ in query size $f(n)+d_0+d$,
as desired.
\end{proof}

We next prove Theorem~\ref{I-random}.
For that purpose, we need Theorem~\ref{time} and the Ample Excess Lemma
below.

\begin{theorem}[Ample Excess Lemma, Miller and Yu \cite{MY08}]\label{AEL}
For every $\alpha\in\R$,
$\alpha$ is weakly Chaitin random if and only if
$\sum_{n=1}^{\infty} 2^{n-H(\reste{\alpha}{n})}<\infty$.
\qed
\end{theorem}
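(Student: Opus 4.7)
The plan is to establish both directions of the biconditional. The $(\Leftarrow)$ direction is immediate: convergence of $\sum_{n=1}^\infty 2^{n-H(\reste{\alpha}{n})}$ forces the summands to be bounded, so $n-H(\reste{\alpha}{n})\le c$ for some constant $c$ and all $n\ge 1$, which is exactly weak Chaitin randomness.

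For the nontrivial direction $(\Rightarrow)$, I would package the infinite sum as a random variable on $\XI$ and build a Martin-L\"of test from its level sets. Define $S\colon\XI\to[0,\infty]$ by $S(X)=\sum_{n=1}^\infty 2^{n-H(\reste{X}{n})}$. The heart of the argument is to bound its expectation with respect to the uniform measure $\mu$ on $\XI$: by monotone convergence,
\begin{equation*}
  \int_{\XI} S\,d\mu
  \;=\;\sum_{n=1}^\infty\sum_{s\in\{0,1\}^n}2^{-n}\cdot 2^{n-H(s)}
  \;=\;\sum_{s\in\X}2^{-H(s)}-2^{-H(\lambda)}.
\end{equation*}
This is bounded by $\Omega_U$, because assigning to each $s\in\X$ a chosen shortest $U$-program $p_s$ gives an injection into $\Dom U$ with prefix-free range, so $\sum_{s\in\X}2^{-H(s)}=\sum_{s}2^{-\abs{p_s}}\le\Omega_U<1$.

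With finiteness of the expectation in hand, for each $c\in\N$ I would consider the level set $V_c=\{X\in\XI:S(X)>2^c\}$. Since $H$ is upper semi-computable uniformly in its argument, $S$ is lower semi-computable, so each $V_c$ is a $\Sigma^0_1$ class, and Markov's inequality gives $\mu(V_c)\le 2^{-c}\Omega_U$. Up to a constant shift of the index, $\{V_c\}$ is therefore a Martin-L\"of test. By Schnorr's theorem, the weakly Chaitin random real $\alpha$ is Martin-L\"of random and hence escapes the test at some stage $c$, which yields $S(\alpha)\le 2^c<\infty$, as desired.

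The main obstacle is the expectation computation together with the verification that $S$ is indeed lower semi-computable uniformly in $X$; this latter point reduces to the standard fact that an enumeration of halting computations of $U$ provides monotonically improving upper bounds on each $H(\reste{X}{n})$, which can then be summed with a uniformly enumerable lower-bound scheme. Everything else — interchanging sum and integral, the Kraft-style bound by $\Omega_U$, Markov's inequality, and the appeal to Schnorr's theorem — is routine.
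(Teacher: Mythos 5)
Your proof is correct. Note that the paper itself offers no proof of this statement -- it is quoted from Miller and Yu \cite{MY08} with a \qed immediately after the statement -- so there is nothing internal to compare against; what you have reconstructed is essentially the original Miller--Yu argument. The $(\Leftarrow)$ direction is indeed immediate from boundedness of the summands. For $(\Rightarrow)$, your expectation bound $\int_{\XI} S\,d\mu \le \sum_{s\in\X}2^{-H(s)} \le \Omega_U < 1$ is exactly the right computation (Tonelli applies since all terms are nonnegative, and the map sending $s$ to a shortest $U$-program for $s$ is an injection into $\Dom U$), and turning the lower semicomputable integral test $S$ into the Martin-L\"of test $V_c=\{S>2^c\}$ via Markov's inequality is the standard route. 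The one external ingredient you lean on is the Levin--Schnorr theorem to pass from weak Chaitin randomness to Martin-L\"of randomness; that is a genuine (if classical) dependency, since the nontrivial direction you need -- not ML-random implies not weakly Chaitin random -- itself requires a Kraft--Chaitin construction. A final cosmetic point: the theorem is stated for $\alpha\in\R$ under the paper's convention that $\reste{\alpha}{n}$ denotes the binary expansion with infinitely many zeros, so one should observe that for dyadic rationals both sides of the equivalence fail (the tails $\reste{\alpha}{n}$ end in $0^k$ and so have complexity $O(\log n)$), after which the identification of $\alpha$ with an element of $\XI$ is harmless.
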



\begin{proof}[Proof of Theorem~\ref{I-random}]
Let $\alpha$ be a real which is weakly Chaitin random,
and let $V$ be an optimal prefix-free machine.
For an arbitrary order function $f$,
assume that $\Pf(\alpha)$ is reducible to $\Dom V$ in query size $f$.
Since $f$ is an order function,
$S_f=\{n\in\N\mid f(n)<f(n+1)\}$ is an infinite recursive set.
Therefore
there exists an increasing total recursive function $h\colon\N\to\N$
such that $h(\N)=S_f$.
It is then easy to see that
$f(n)=f(h(k+1))$ for every $k$ and $n$ with $h(k)<n\le h(k+1)$.
Thus, for each $k\ge 1$,
we see that
\begin{equation}\label{key_idea}
\begin{split}
  \sum_{n=h(0)+1}^{h(k)}2^{n-f(n)}
  &=\sum_{j=0}^{k-1}\sum_{n=h(j)+1}^{h(j+1)}2^{n-f(n)}
  =\sum_{j=0}^{k-1}2^{-f(h(j+1))}
    \sum_{n=h(j)+1}^{h(j+1)}2^{n}\\
  &=\sum_{j=0}^{k-1}2^{-f(h(j+1))}
    \left(2^{h(j+1)+1}-2^{h(j)+1}\right)
  <2\sum_{j=1}^{k}2^{h(j)-f(h(j))}.
\end{split}
\end{equation}

On the other hand,
let $M$ be a deterministic Turing machine which computes $V$.
For each $n\ge L_M$,
we choose a particular $p_n$ from $I_M^n$.
Note that,
given $(n,p_{f(n)})$ with $f(n)\ge L_M$,
one can calculate the finite set $\rest{\Dom V}{f(n)}$
by simulating the computation of $M$ with the input $q$
until at most the time step $T_M(p_{f(n)})$,
for each $q\in\X$ with $\abs{q}\le f(n)$.
This can be possible because $T_M(p_{f(n)})=T_M^{f(n)}$ for every $n\in\N$ with $f(n)\ge L_M$.
Thus, since $\Pf(\alpha)$ is reducible to $\Dom V$ in query size $f$ by the assumption,
we see that there exists a partial recursive function
$\Psi\colon \N\times\X\to\X$ such that,
for all $n\in\N$ with $f(n)\ge L_M$,
$\Psi(n,p_{f(n)})=\rest{\alpha}{n}$.
It follows
from the optimality of $U$
that
$H(\rest{\alpha}{n})\le H(n,p_{f(n)})+O(1)$
for all $n\in\N$ with $f(n)\ge L_M$.
On the other hand,
since the mapping $\N\ni k\mapsto f(h(k))$ is an increasing total recursive function,
it follows also
from the optimality of $U$
that
$H(h(k),s)\le H(f(h(k)),s)+O(1)$
for all $k\in\N$ and $s\in\X$.
Therefore, using Theorem~\ref{time} we see that
\begin{equation}\label{complexity-bound}
  H(\rest{\alpha}{h(k)})\le f(h(k))+O(1)
\end{equation}
for all $k\in\N$.
Since $\alpha$ is weakly Chaitin random,
using the Ample Excess Lemma, i.e., Theorem~\ref{AEL},
we have $\sum_{n=1}^{\infty} 2^{n-H(\reste{\alpha}{n})}<\infty$.
Note that the function $h$ is
injective.
Thus, using \eqref{complexity-bound} we have
\begin{equation*}
  \sum_{j=1}^{\infty} 2^{h(j)-f(h(j))}
  \le
  \sum_{j=1}^{\infty} 2^{h(j)-H(\reste{\alpha}{h(j)})+O(1)}
  \le
  \sum_{n=1}^{\infty} 2^{n-H(\reste{\alpha}{n})+O(1)}
  <\infty.
\end{equation*}
It follows from \eqref{key_idea} that
$\lim_{k\to\infty}\sum_{n=h(0)+1}^{h(k)}2^{n-f(n)}<\infty$.
Thus, since $2^{n-f(n)}>0$ for all $n\in\N$
and $\lim_{k\to\infty}h(k)=\infty$,
we have $\sum_{n=0}^{\infty}2^{n-f(n)}<\infty$, as desired.
\end{proof}

\subsection{The Proof of Theorem~\ref{one-way II}}
\label{Proof_of_Theorem_one-way II}

The implication (ii) $\Rightarrow$ (i) of Theorem~\ref{one-way II} follows immediately from
Proposition~\ref{DomVrdPfOVqsn} and Proposition~\ref{observations} (ii).
On the other hand, the implication (i) $\Rightarrow$ (ii) of Theorem~\ref{one-way II} is proved as follows.

\begin{proof}[Proof of (i) $\Rightarrow$ (ii) of Theorem~\ref{one-way II}]
Let $V$ and $W$ be optimal prefix-free machines, and let $f$ be an order function.
Suppose that there exists $c\in\N$ such that $\Dom W$ is reducible to $\Pf(\Omega_V)$ in query size $f(n)+c$.
Then, by considering the following procedure,
we first see that $n<H(n,\rest{\Omega_V}{f(n)+c})+O(1)$
for all $n\in\N$.

Given $n$ and $\rest{\Omega_V}{f(n)+c}$,
one first calculates the finite set $\rest{\Dom W}{n}$.
This is possible since
$\Dom W$ is reducible to $\Pf(\Omega_V)$ in query size $f(n)+c$
and $f(k)\le f(n)$ for all $k\le n$.
Then,
by calculating the set $\{\,W(p)\mid p\in\rest{\Dom W}{n}\}$
and picking any one finite binary string $s$ which is not in this set,
one can obtain $s\in\X$ such that $n<H_W(s)$.

Thus, there exists a partial recursive function
$\Psi\colon\N\times\X\to\X$ such that,
for all $n\in\N$,
$n<H_W(\Psi(n,\rest{\Omega_V}{f(n)+c}))$.
It follows
from the optimality of $W$
that
\begin{equation}\label{n-f(n)-lower-bound}
  n<H(n,\rest{\Omega_V}{f(n)+c})+O(1)
\end{equation}
for all $n\in\N$.

Now, let us assume contrarily that
the function $n-f(n)$ of $n\in\N$ is unbounded.
Recall that $f$ is an order function.
Hence it is easy to show that there exists a total recursive function $g\colon\N\to\N$ such that
the function $f(g(k))$ of $k$ is increasing and the function $g(k)-f(g(k))$ of $k$ is also increasing.
For clarity,
we define a total recursive function $m\colon \N\to\N$
by $m(k)=f(g(k))+c$.
Since $m$ is
injective,
it is then easy to see that
there exists a partial recursive function
$\Phi\colon\N\to\N$ such that
$\Phi(m(k))=g(k)$ for all $k\in\N$.
Therefore,
based on the optimality of $U$, 
it is shown that
$H(g(k),\rest{\Omega_V}{m(k)})
\le H(\rest{\Omega_V}{m(k)})+O(1)$
for all $k\in\N$.
It follows from \eqref{n-f(n)-lower-bound} that
$g(k)<H(\rest{\Omega_V}{m(k)})+O(1)$
for all $k\in\N$.
On the other hand,
we can show that $H(s)\le\abs{s}+H(\abs{s})+O(1)$ for all $s\in\X$.
Therefore we have $g(k)-f(g(k))<H(m(k))+O(1)$ for all $k\in\N$.
Then, since the function $g(k)-f(g(k))$ of $k$ is unbounded,
it is easy to see that there exists a total recursive function $\Theta\colon\N^+\to\N$ such that, for every $l\in\N^+$, $l\le H(\Theta(l))$.
It follows
from the optimality of $U$
that $l\le H(l)+O(1)$ for all $l\in\N^+$. 
On the other hand,
we can show that $H(l)\le 2\log_2 l+O(1)$ for all $l\in\N^+$.
Thus we have $l\le 2\log_2 l+O(1)$ for all $l\in\N^+$.
However, we have a contradiction on letting $l\to\infty$ in this inequality.
This completes the proof.
\end{proof}

\section{\boldmath $T$-Convergent R.E.~Reals}
\label{T-convergent re}

Let $T$ be an arbitrary computable real with $0<T\le 1$.
The parameter $T$ plays a crucial role in the present paper.%
\footnote{
The parameter $T$ corresponds to the notion of ``temperature''
in the statistical mechanical interpretation of
AIT
introduced
by Tadaki
\cite{T08CiE}.
}
In this section, we investigate the relation of $T$-convergent r.e.~reals to the halting problems.
In particular, Theorem~\ref{DomVTn-reT} below is used to show
Theorem~\ref{main1}
in the next section, and plays a major role in establishing the bidirectionality in the next section.
On the other hand, Theorem~\ref{ZVTwCTr-strictTcb} below is used to show
Theorem~\ref{main2}
in the next section.

Recently, Calude, Hay, and Stephan \cite{CHS11} showed the existence of an r.e.~real
which is weakly Chaitin $T$-random and strictly $T$-compressible,
in the case where $T$ is a computable real with $0<T<1$,
as follows.

\begin{theorem}[Calude, Hay, and Stephan \cite{CHS11}]\label{CHS}
Suppose that $T$ is a computable real with $0<T<1$.
Then there exist an r.e.~real $\alpha\in(0,1)$ and $d\in\N$ such that,
for all $n\in\N^+$,
$\abs{H(\rest{\alpha}{n})-Tn}\le d$.
\qed
\end{theorem}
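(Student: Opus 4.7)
The bound $\abs{H(\rest{\alpha}{n})-Tn}\le d$ decomposes into the weak Chaitin $T$-randomness bound $H(\rest{\alpha}{n})\ge Tn-O(1)$ and the strict $T$-compressibility bound $H(\rest{\alpha}{n})\le Tn+O(1)$. My plan is to build a single r.e.~real $\alpha\in(0,1)$ satisfying both by sparsely embedding $\Omega_U$: fix a computable set $E\subset\N^+$ of density $T$, so that $m(n):=\abs{E\cap\{1,\dotsc,n\}}=\lceil Tn\rceil+O(1)$, and define $\alpha$ by placing the $k$-th bit of the base-two expansion of $\Omega_U$ at the $k$-th element of $E$ and $0$ at every non-$E$ position. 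Then $\alpha\in(0,1)$, and $\alpha$ is r.e.\ since $\Omega_U$ is and the embedding is monotone.

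For the lower bound, $\rest{\alpha}{n}$ recovers $\rest{\Omega_U}{m(n)}$ by restriction to $E\cap\{1,\dotsc,n\}$, a computable operation given $n$; hence $H(\rest{\Omega_U}{m(n)})\le H(\rest{\alpha}{n})+O(1)$, and the weak Chaitin randomness of $\Omega_U$ yields $H(\rest{\alpha}{n})\ge m(n)-O(1)\ge Tn-O(1)$. For the upper bound, I would apply the Kraft-Chaitin Theorem (Theorem~\ref{KC}) to build a prefix-free machine $F$ assigning to each $n$ an input of length $\lceil Tn\rceil+c$ that decodes to $\rest{\alpha}{n}$. The decoder reads $n$ off from the input length, which pins it down up to the at most $\lceil 1/T\rceil$ candidates sharing the same value of $\lceil Tn\rceil$; an $O(1)$-bit disambiguator selects the correct $n$, and the remaining $\lceil Tn\rceil$ bits are interpreted as $\rest{\Omega_U}{m(n)}$, from which $\rest{\alpha}{n}$ is reconstructed via the embedding. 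Kraft's inequality $\sum_n 2^{-\lceil Tn\rceil-c}\le 1$ holds for $c$ large enough because $T<1$ makes $\sum_n 2^{-\lceil Tn\rceil}$ converge, and optimality of $U$ then gives $H(\rest{\alpha}{n})\le Tn+O(1)$.

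The main obstacle is honestly realizing $F$ as a prefix-free partial computable function whose halting set respects Kraft's inequality, despite two coupled difficulties: the naive codes $(\rest{\Omega_U}{m(n)},\text{disambiguator})$ can accidentally nest, since the disambiguator of a short code may equal the next few bits of $\Omega_U$ appearing inside a longer code, and because $\Omega_U$ is only left-computable $F$ cannot directly verify that a candidate input equals a given prefix of $\Omega_U$. Surmounting both issues requires a Kraft-Chaitin request sequence generated dynamically in step with the enumeration of $\Omega_U$: each time the approximation $\rest{\Omega_U^{(s)}}{m(n)}$ changes, a fresh request of weight $2^{-\lceil Tn\rceil-c}$ is filed with the current $\rest{\alpha^{(s)}}{n}$ as the target. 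The strict inequality $T<1$ supplies the budget room needed to absorb the $O(1)$ disambiguation overhead and to keep the total Kraft weight finite across all levels, while the $1$-convergence of $\Omega_U$ (its defining enumeration has total mass $\Omega_U<\infty$) bounds the aggregate reshuffling coming from refinements of the $\Omega_U$-approximation. Executing this joint construction cleanly is the technical crux of the argument.
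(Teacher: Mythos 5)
Your construction cannot work: the real obtained by diluting $\Omega_U$ into the positions of a density-$T$ set $E$ is weakly Chaitin $T$-random (your lower-bound argument is fine), but it is provably \emph{not} strictly $T$-compressible, so the upper bound $H(\rest{\alpha}{n})\le Tn+O(1)$ is false for this $\alpha$. Indeed, from $\rest{\alpha}{n}$ one computes $n$ (its length) and, by restriction to $E$, the string $\rest{\Omega_U}{m(n)}$; hence $H(\rest{\Omega_U}{m(n)})\le H(\rest{\alpha}{n})+O(1)$. By the Ample Excess Lemma (Theorem~\ref{AEL}), $\sum_m 2^{m-H(\reste{\Omega_U}{m})}<\infty$, so $H(\rest{\Omega_U}{m})-m\to+\infty$; since $m(n)=Tn+O(1)$ takes every sufficiently large integer value, this forces $H(\rest{\alpha}{n})-Tn\to+\infty$. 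The same obstruction shows up concretely in your dynamic Kraft--Chaitin accounting: the first $m$ bits of a monotone approximation to $\Omega_U$ can change up to about $2^{m}\cdot\Omega_U$ times, so the total weight filed at level $n$ is about $2^{m(n)}\cdot 2^{-\lceil Tn\rceil-c}=2^{O(1)-c}$, and summing over all $n$ diverges no matter how large $c$ is. The strictness of $T<1$ does not rescue this; the budget deficit is per level, not global.

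The fix is to dilute the right object. The paper's proof (Theorem~\ref{DomVTn-arewCTr} combined with Theorem~\ref{time}) builds $\alpha$ by a greedy diagonalization: $a_k$ is the least $c$-bit extension $u$ of $a_{k-1}$ with $H_V(u)>T\abs{u}$, which exists by Lemma~\ref{RS-CHS}~(i); this gives weak Chaitin $T$-randomness directly, and makes $\rest{\alpha}{n}$ computable from the finite set $\rest{\Dom V}{\lceil Tn\rceil}$. The upper bound then comes from Theorem~\ref{time}: $\rest{\Dom V}{m}$ is computable from the single longest-running halting program $p$ of length at most $m$, and $H(m,p)=m+O(1)$. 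In other words, the ``$m$-bit object whose complexity is exactly $m+O(1)$'' is the time-bound witness for $\rest{\Dom V}{m}$, not $\rest{\Omega_U}{m}$, whose complexity exceeds $m$ by unbounded amounts --- this asymmetry is precisely the content of Theorems~\ref{one-way I} and~\ref{one-way II}. Any repair of your argument would have to replace the embedded prefixes of $\Omega_U$ by information of this halting-problem type.
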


We first show that the same r.e.~real $\alpha$ as in Theorem~\ref{CHS} has the following property.

\begin{theorem}\label{DomVTn-arewCTr}
Suppose that $T$ is a computable real with $0<T<1$.
Let $V$ be an optimal prefix-free machine.
Then there exists an r.e.~real $\alpha\in(0,1)$
such that $\alpha$ is weakly Chaitin $T$-random and
$\Pf(\alpha)$ is reducible to $\Dom V$ in query size $\lfloor Tn\rfloor+O(1)$.
\qed
\end{theorem}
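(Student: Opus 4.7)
The plan is to take the real $\alpha$ produced by Theorem~\ref{CHS} and show that it automatically witnesses the reducibility statement. The key additional ingredient will be an auxiliary prefix-free machine $F$ whose halting problem filters out exactly those halting programs whose outputs lie below $\alpha$ in the dyadic ordering.

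First I would invoke Theorem~\ref{CHS} to fix an r.e.~real $\alpha\in(0,1)$ and $d\in\N$ with $\abs{H(\rest{\alpha}{n})-Tn}\le d$ for every $n\in\N^+$. The lower bound $Tn-d\le H(\rest{\alpha}{n})$ immediately gives that $\alpha$ is weakly Chaitin $T$-random; it also forces $H(\rest{\alpha}{n})\to\infty$, so $\alpha$ is not computable and in particular irrational. The upper bound $H(\rest{\alpha}{n})\le Tn+d$ (strict $T$-compressibility) will be the workhorse for the reducibility.

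Next, I fix a computable increasing sequence $\{a_k\}$ of rationals with $a_k\nearrow\alpha$ and define a partial recursive function $F\colon\X\to\X$ as follows: on input $p$, simulate $U(p)$; if it halts with output $s$, enumerate $a_0,a_1,\dots$ until some $a_k>0.s$ is found, and then output $s$. Then $\Dom F=\{p\in\Dom U\mid 0.U(p)<\alpha\}$, which is a subset of the prefix-free set $\Dom U$ and so is itself prefix-free; hence $F$ is a prefix-free machine. The reduction then proceeds as follows: given an input $t\in\X$ of length $n$, query $\rest{\Dom V}{\lfloor Tn\rfloor+c}$ for a sufficiently large constant $c$. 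By Theorem~\ref{op-p} applied to $F$, this query suffices to determine $\rest{\Dom F}{\lfloor Tn\rfloor+d+O(1)}$, and we form the finite set
\[
S=\{F(p)\mid p\in\rest{\Dom F}{\lfloor Tn\rfloor+d+O(1)}\text{ and }\abs{F(p)}=n\}.
\]
I claim $\max S=\rest{\alpha}{n}$, interpreting length-$n$ strings as $n$-bit integers. Strict $T$-compressibility supplies some $p$ with $\abs{p}\le Tn+d$ and $U(p)=\rest{\alpha}{n}$, and irrationality of $\alpha$ yields $0.\rest{\alpha}{n}<\alpha$, so this $p$ lies in $\Dom F$ and $\rest{\alpha}{n}\in S$. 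Conversely, any $s\in\{0,1\}^n$ with $s>\rest{\alpha}{n}$ satisfies $0.s\ge 0.\rest{\alpha}{n}+2^{-n}>\alpha$, so $s$ cannot arise as an output of $F$. The reduction then accepts $t$ iff $t=\max S$.

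The main obstacle is picking the correct auxiliary machine. A direct application of Theorem~\ref{I-re} with $f(n)=\lfloor Tn\rfloor$ fails because $\sum 2^{n-\lfloor Tn\rfloor}$ diverges, and a naive attempt to Kraft--Chaitin-encode $\rest{\alpha}{n}$ as an output runs into circularity since $\rest{\alpha}{n}$ is not computable from $n$. The trick is that $F$ need not compute $\rest{\alpha}{n}$ on command; it only needs to enumerate halting programs whose outputs dyadically precede $\alpha$, and then the extremal characterization $\rest{\alpha}{n}=\max\{s\in\{0,1\}^n\mid 0.s<\alpha\}$ together with strict $T$-compressibility picks $\rest{\alpha}{n}$ out of $S$ automatically. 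The remaining bookkeeping of the constants introduced by Theorem~\ref{op-p} is routine.
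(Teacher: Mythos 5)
Your proof is correct, but it takes a genuinely different route from the paper's. The paper does not invoke Theorem~\ref{CHS} as a black box: it constructs $\alpha$ from scratch by iterating Lemma~\ref{RS-CHS}~(i), defining $a_k=\rest{\alpha}{ck}$ as the least length-$ck$ extension of $a_{k-1}$ with $H_V(a_k)>Tck$. Weak Chaitin $T$-randomness then holds essentially by construction, the reduction recovers $a_1,\dots,a_{k}$ from the finite sets $\{s\mid H_V(s)\le\lfloor Tcj\rfloor\}$ (computable from $\rest{\Dom V}{\lfloor Tck\rfloor}$), and the r.e.-ness of $\alpha$ requires a separate approximation argument with truncated machines $V^{(l)}$. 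Your argument instead isolates a clean general lemma: every non-dyadic r.e.\ real that is strictly $T$-compressible has its prefixes reducible to $\Dom V$ in query size $\lfloor Tn\rfloor+O(1)$, via the filtering machine $F$ with $\Dom F=\{p\in\Dom U\mid 0.U(p)<\alpha\}$ and the identity $\rest{\alpha}{n}=\max\{s\in\{0,1\}^n\mid 0.s<\alpha\}$; your verification that $\max S=\rest{\alpha}{n}$ and that the queries stay within $\lfloor Tn\rfloor+O(1)$ via Theorem~\ref{op-p} is sound. This is shorter and, combined with Theorem~\ref{icalp10-uc10}, would also yield Theorem~\ref{DomVTn-reT} directly, bypassing the domination argument used there. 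What it costs: your proof makes Theorem~\ref{DomVTn-arewCTr} depend on the external proof of Theorem~\ref{CHS} in \cite{CHS11} (not circular, since that proof is independent of this paper), whereas the paper's self-contained construction is precisely what lets it re-derive Theorem~\ref{CHS} afterwards via Theorem~\ref{time} --- an application your route would forfeit. One trivial piece of bookkeeping: treat the input $\lambda$ separately (or enlarge $d$ so that $H(\lambda)\le d$), since strict $T$-compressibility is only asserted for $n\in\N^+$.
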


Calude, et al.~\cite{CHS11} use Lemma~\ref{RS-CHS} below to show Theorem~\ref{CHS}.
We also use
it
to show
Theorem~\ref{DomVTn-arewCTr}.
We include the proof of Lemma~\ref{RS-CHS} in Appendix~\ref{proof-RS-CHS} for completeness.

\begin{lemma}[Reimann and Stephan~\cite{RS05} and Calude, Hay, and Stephan~\cite{CHS11}]\label{RS-CHS}
Let $T$ be a real with $T>0$, and let $V$ be an optimal prefix-free machine.
\begin{enumerate}
\item Suppose that $T<1$.
  Then there exists $c\in\N^+$ such that, for every $s\in\X$,
  there exists $t\in\{0,1\}^c$ for which $H_V(st)\ge H_V(s)+Tc$.
\item There exists $c\in\N^+$ such that, for every $s\in\X$,
  $H_V(s0^c)\le H_V(s)+Tc-1$ and $H_V(s1^c)\le H_V(s)+Tc-1$.\qed
\end{enumerate}
\end{lemma}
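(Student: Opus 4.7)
The plan is to treat the two parts separately: part (ii) is handled by a direct construction of a prefix-free machine that pads its output, while part (i) requires a counting argument via an auxiliary prefix-free machine that truncates $V$'s output.

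For part (ii), I would construct a prefix-free machine $M$ which on input of the form $\langle k\rangle p$, where $\langle k\rangle$ is a standard self-delimiting encoding of $k\in\N$, simulates $V(p)$ and, if the computation halts, outputs $V(p)0^k$. Applying the optimality of $V$ to $M$ then gives
\begin{equation*}
H_V(s0^c)\le H_V(s)+2\log_2 c+O(1)
\end{equation*}
for every $s\in\X$ and $c\in\N^+$, with the constant independent of $s$. Since $T>0$, the right-hand side is bounded by $H_V(s)+Tc-1$ for all sufficiently large $c$, uniformly in $s$. An analogous machine producing $V(p)1^k$ handles the other case, and taking the larger threshold yields the required witness.

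For part (i), fix a $c\in\N^+$ (to be chosen later) and define the prefix-free machine $W_c$ which on input $p$ simulates $V(p)$ and, if $V(p)$ halts with $\abs{V(p)}\ge c$, outputs the initial $\abs{V(p)}-c$ bits of $V(p)$. Its domain is a subset of the prefix-free set $\Dom V$, so $W_c$ is itself prefix-free. The programs of $W_c$ outputting a string $s$ are exactly those $p\in\Dom V$ with $V(p)=st$ for some $t\in\{0,1\}^c$, so applying the optimality of $V$ to $W_c$ yields a constant $d_c$ with
\begin{equation*}
\sum_{t\in\{0,1\}^c}\sum_{p\in\Dom V,\,V(p)=st}2^{-\abs{p}}
\le 2^{d_c}\sum_{p\in\Dom V,\,V(p)=s}2^{-\abs{p}}.
\end{equation*}
Because $W_c$ is specified by $V$ together with the constant $c$, we have $d_c=O(\log c)$, independent of $s$. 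Bounding the inner sum on the left from below by $2^{-H_V(st)}$ and the sum on the right from above by $2^{-H_V(s)+O(1)}$ via the Coding Theorem for the optimal $V$, one obtains
\begin{equation*}
\sum_{t\in\{0,1\}^c}2^{-H_V(st)}\le 2^{-H_V(s)+O(\log c)}.
\end{equation*}

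If the conclusion of (i) were to fail for this $c$ at some $s$, every $t\in\{0,1\}^c$ would satisfy $2^{-H_V(st)}>2^{-H_V(s)-Tc}$; summing these $2^c$ inequalities and comparing with the display above would force $(1-T)c<O(\log c)$. Since $T<1$, this fails for all $c$ exceeding a fixed threshold depending only on $T$ and on the absolute constants of $V$, and any such $c$ is the desired witness. The main subtlety lies in the construction of $W_c$: the naive alternative of truncating $V$'s output by a length supplied as input to some machine $W$ would introduce an $O(\log\abs{s})$ overhead that is fatal to the bound. The key idea is that $c$ must be a constant hard-coded into the definition of $W_c$, so the cost of describing $W_c$ to $V$ is only $O(\log c)$, which is easily dominated by the $(1-T)c$ gap once $c$ is chosen large.
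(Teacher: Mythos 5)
Your part (ii) is essentially the paper's argument: the paper likewise derives $H_V(s0^n)\le H_V(s)+H(n)+d$ from a padding construction and then picks $c$ with $H(c)+d\le Tc-1$; your explicit machine on inputs $\langle k\rangle p$ and your $2\log_2 c$ overhead play exactly the roles of that construction and of $H(c)$. Part (i), however, takes a genuinely different route. The paper combines Chaitin's symmetry of information $H(s,t)=H(s)+H(t/s)+O(1)$ with the counting fact that some $t\in\{0,1\}^n$ has $H(t/s)\ge n$, obtaining $H_V(st)\ge H_V(s)+n-H(n)-d'$ for some $t$, and then chooses $c$ with $(1-T)c-H(c)-d'\ge 0$. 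You instead sum algorithmic probabilities over all $2^c$ extensions via the truncating machine $W_c$; this avoids symmetry of information (the deepest ingredient of the paper's proof) at the price of needing the Coding Theorem with a constant that is uniform, i.e.\ $O(\log c)$, in $c$. One justification does need repair: your first displayed domination $\sum_{t}\sum_{V(p)=st}2^{-\abs{p}}\le 2^{d_c}\sum_{V(p)=s}2^{-\abs{p}}$ does not follow from the paper's notion of optimality, which only guarantees, for each $p\in\Dom W_c$, some $q\in\Dom V$ with $V(q)=W_c(p)$ and $\abs{q}\le\abs{p}+d_c$; that map $p\mapsto q$ need not be injective, so the weights cannot be transferred term by term. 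The inequality is nevertheless true by the route you already half-invoke: the left-hand side is a lower-semicomputable discrete semimeasure in $s$, uniformly in $c$, so the Coding Theorem bounds it by $2^{-H(s)+O(\log c)}$, and $2^{-H(s)}\le 2^{O(1)}\cdot 2^{-H_V(s)}\le 2^{O(1)}\sum_{V(p)=s}2^{-\abs{p}}$ by optimality. With that repair your contradiction $(1-T)c<O(\log c)$ goes through and the proof is correct.
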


The proof of Theorem~\ref{DomVTn-arewCTr} is then given as follows.

\begin{proof}[Proof of Theorem~\ref{DomVTn-arewCTr}]
Suppose that $T$ is a computable real with $0<T<1$.
Let $V$ be an optimal prefix-free machine.
Then it follows from Lemma~\ref{RS-CHS} that
there exists $c\in\N^+$ such that, for every $s\in\X$,
there exists $t\in\{0,1\}^c$ for which
\begin{equation}\label{HVstgHVspTc}
  H_V(st)\ge H_V(s)+Tc.
\end{equation}
For each prefix-free machine $G$ and each $s\in\X$,
we denote by $S(G;s)$ the set
\begin{equation*}
  \bigl\{\,u\in\{0,1\}^{\abs{s}+c}\bigm|
  \text{$s$ is a prefix of $u$ }\&\;H_G(u)>T\abs{u}\,\bigr\}.
\end{equation*}


Now,
we define a sequence $\{a_k\}_{k\in\N}$ of finite binary strings
recursively on $k\in\N$
by $a_k:=\lambda$ if $k=0$ and $a_k:=\min S(V;a_{k-1})$ otherwise.
First note that $a_0$ is properly defined as $\lambda$ and therefore
satisfies $H_V(a_0)>T\abs{a_0}$.
For each $k\ge 1$,
assume that $a_0,a_1,a_2,\dots,a_{k-1}$ are properly defined.
Then $H_V(a_{k-1})>T\abs{a_{k-1}}$ holds.
It follow from \eqref{HVstgHVspTc} that
there exists $t\in\{0,1\}^c$ for which $H_V(a_{k-1}t)\ge H_V(a_{k-1})+Tc$,
and therefore
$a_{k-1}t\in\{0,1\}^{\abs{a_{k-1}}+c}$ and $H_V(a_{k-1}t)\ge T\abs{a_{k-1}t}$.
Thus $S(V;a_{k-1})\neq\emptyset$,
and therefore $a_{k}$ is properly defined.
Hence, $a_{k}$ is properly defined for every $k\in\N$.
We thus see that, for every $k\in\N$,
$a_{k}\in\{0,1\}^{ck}$, $H_V(a_{k})>T\abs{a_{k}}$, and $a_{k}$ is a prefix of $a_{k+1}$. 
Therefore,
it is easy to see that,
for every $m\in\N^+$,
there exists $k\in\N$ such that $a_{k}$ contains $m$ zeros.
Thus, we can uniquely define a real $\alpha\in[0,1)$ by the condition that
$\rest{\alpha}{ck}=a_{k}$ for all $k\in\N^+$.
It follows that $H_V(\rest{\alpha}{ck})>T\abs{\rest{\alpha}{ck}}$ for all $k\in\N^+$.
Note that there exists $d_0\in\N$ such that, for every $s,t\in\X$,
if $\abs{t}\le c$ then $\abs{H_V(st)-H_V(s)}\le d_0$.
Therefore,
there exists $d_1\in\N$ such that, for every $n\in\N^+$,
$H_V(\rest{\alpha}{n})>Tn-d_1$,
which implies that $\alpha$ is weakly Chaitin $T$-random
and therefore $\alpha\in(0,1)$.

Next, we show that
$\Pf(\alpha)$ is reducible to $\Dom V$ in query size $\lceil Tn\rceil+O(1)$.
For each $k\in\N$,
we denote by $F_k$ the set $\{s\in\X\mid H_V(s)\le \lfloor Tck\rfloor\}$.
It follows that
\begin{equation}\label{ak=ak-1fk}
  a_{k}=
  \min\bigl\{\,u\in\{0,1\}^{ck}\bigm|
  \text{$a_{k-1}$ is a prefix of $u$ }\&\;u\notin F_{k}\,\}
\end{equation}
for every $k\in\N^+$.
By the following procedure, we see that
$\Pf(\alpha)$ is reducible to $\Dom V$ in query size $\lfloor Tn\rfloor+O(1)$.

Given $s\in\X$ with $s\neq\lambda$,
one first calculates
the $k_0$ finite sets $F_1,F_2,\dots,F_{k_0}$, where $k_0=\lceil\abs{s}/c\rceil$,
by putting queries to the oracle $\Dom V$.
Note here that
all the queries can be of length at most $\lfloor T(\abs{s}+c)\rfloor$.
One then calculates $a_1,a_2,\dots,a_{k_0}$ in this order one by one
from $a_0=\lambda$
based on the relation \eqref{ak=ak-1fk} and $F_1,F_2,\dots,F_{k_0}$.
Finally, one accepts $s$ if $s$ is a prefix of $a_{k_0}$ and rejects otherwise.
This is possible since $\rest{\alpha}{ck_0}=a_{k_0}$ and $\abs{s}\le ck_0$.

Finally,
we show that $\alpha$ is an r.e.~real.
Let $p_1,p_2,p_3, \dotsc$ be a particular recursive enumeration of
the infinite r.e.~set $\Dom V$.
For each $l\in\N^+$,
we define a prefix-free machine $V^{(l)}$
by the following two conditions (i) and (ii):
(i) $\Dom V^{(l)}=\{p_1,p_2,\dots,p_l\}$.
(ii)$V^{(l)}(p)=V(p)$ for every $p\in\Dom V^{(l)}$.
It is easy to see that
such prefix-free machines $V^{(1)},V^{(2)},V^{(3)},\dotsc$ exist.
For each $l\in\N^+$ and each $s\in\X$,
note that $H_{V^{(l)}}(s)\ge H_V(s)$ holds,
where $H_{V^{(l)}}(s)$ may be $\infty$.
For each $l\in\N$,
we define a sequence $\{a^{(l)}_k\}_{k\in\N}$ of finite binary strings
recursively on $k\in\N$
by $a^{(l)}_k:=\lambda$ if $k=0$ and
$a^{(l)}_k:=\min (S(V^{(l)};a^{(l)}_{k-1})\cup\{a^{(l)}_{k-1}1^c\})$ otherwise.
It follows that $a^{(l)}_{k}$ is properly defined for every $k\in\N$.
Note, in particular, that
$a^{(l)}_{k}\in\{0,1\}^{ck}$ and $a^{(l)}_{k}$ is a prefix of $a^{(l)}_{k+1}$
for every $k\in\N$.

Let $l\in\N^+$.
We show that $a^{(l)}_k\le a_k$ for every $k\in\N^+$.
To see this,
assume that $a^{(l)}_{k-1}=a_{k-1}$.
Then,
since $H_{V^{(l)}}(s)\ge H_V(s)$ holds for every $s\in\X$,
based on the constructions of
$a^{(l)}_k$ and $a_k$ from $a^{(l)}_{k-1}$ and $a_{k-1}$, respectively,
we see that $a^{(l)}_{k}\le a_{k}$.
Thus,
based on the constructions of
$\{a^{(l)}_k\}_{k\in\N}$ and $\{a_k\}_{k\in\N}$
we see that $a^{(l)}_k\le a_k$ for every $k\in\N^+$.

We define a sequence $\{r_k\}_{k\in\N}$ of rationals by $r_k=0.a^{(k)}_k$.
Obviously,
$\{r_k\}_{k\in\N}$ is a computable sequence of rationals. 
Based on the result in the previous paragraph,
we see that $r_k\le \alpha$ for every $k\in\N^+$.
Based on the constructions of
prefix-free machines $V^{(1)},V^{(2)},V^{(3)},\dotsc$
from $V$,
it is also easy to see that $\lim_{k\to\infty}r_k=\alpha$.
Thus we see that $\alpha$ is an r.e.~real.
\end{proof}

Note that,
using Theorem~\ref{time} and Theorem~\ref{DomVTn-arewCTr},
we can give to Theorem~\ref{CHS} a different proof from Calude, et al.~\cite{CHS11} as follows.

\begin{proof}[Different Proof of Theorem~\ref{CHS} from Calude, et al.~\cite{CHS11}]
Suppose that $T$ is a computable real with $0<T<1$.
We choose a particular optimal prefix-free machine $V$ and a particular deterministic Turing machine $M$ such that $M$ computes $V$.
For each $n$ with $\lceil Tn\rceil\ge L_M$,
we choose a particular $p_n$ from $I_M^{\lceil Tn\rceil}$.
By Theorem~\ref{DomVTn-arewCTr}, there exist an r.e.~real $\alpha\in(0,1)$, an oracle deterministic Turing machine $M_0$, and $c\in\N$ such that
$\alpha$ is weakly Chaitin $T$-random and, for all $n\in\N^+$, $M_0^{\reste{\Dom V}{\lceil Tn\rceil}}(n)=\rest{\alpha}{n-c}$.
Then, by the following procedure, we see that there exists a partial recursive function $\Psi\colon \N\times\X\to\X$ such that, 
for all $n$ with $\lceil Tn\rceil\ge L_M$,
\begin{equation}\label{pnpn=an-c}
  \Psi(n,p_n)=\rest{\alpha}{n-c}.
\end{equation}

Given $(n,p_n)$ with $\lceil Tn\rceil\ge L_M$,
one first calculates the finite set $\rest{\Dom V}{\lceil Tn\rceil}$
by simulating the computation of $M$ with the input $q$
until at most the time step $T_M(p_n)$,
for each $q\in\X$ with $\abs{q}\le \lceil Tn\rceil$.
This can be possible because $T_M(p_n)=T_M^{\lceil Tn\rceil}$
for every $n$ with $\lceil Tn\rceil\ge L_M$.
One then calculates $\rest{\alpha}{n-c}$
by simulating the computation of $M_0$
with the input $n$ and the oracle $\rest{\Dom V}{\lceil Tn\rceil}$.

It follows from \eqref{pnpn=an-c}
that
\begin{equation}\label{Han-clHnp}
  H(\rest{\alpha}{n-c})\le H(n,p_n)+O(1)
\end{equation}
for all $n$ with $\lceil Tn\rceil\ge L_M$.

On the other hand,
given $\lceil Tn\rceil$ with $n\in\N^+$,
one only need to specify one of $\lceil 1/T\rceil$ possibilities
of $n$ in order to calculate $n$,
since $T$ is a computable real and $T\neq 0$.
Thus, there exists a partial recursive function
$\Phi\colon \N^+\times\X\times\N^+\to\N^+\times \X$ such that,
for every $n\in\N^+$ and every $p\in\X$,
there exists $k\in\N^+$ with the properties that
$1\le k\le \lceil 1/T\rceil$ and $\Phi(\lceil Tn\rceil,p,k)=(n,p)$.
It follows
that
$H(n,p)\le
H(\lceil Tn\rceil,p)+
\max\{H(k)\mid k\in\N^+\ \&\ 1\le k\le \lceil 1/T\rceil\,\}+O(1)$
for all $n\in\N^+$ and all $p\in\X$.
Hence,
using \eqref{Han-clHnp} and Theorem~\ref{time} we have
\begin{equation*}
  H(\rest{\alpha}{n-c})\le H(\lceil Tn\rceil,p_n)+O(1)\le \lceil Tn\rceil +O(1)\le Tn+O(1)
\end{equation*}
for all $n$ with $\lceil Tn\rceil\ge L_M$.
It follows that $H(\rest{\alpha}{n})\le Tn+O(1)$ for all $n\in\N^+$,
which implies that $\alpha$ is strictly $T$-compressible.
This completes the proof.
\end{proof}

Using Theorem~\ref{partial randomness} and Theorem~\ref{CHS} we can prove the following theorem.

\begin{theorem}\label{icalp10-uc10}
Suppose that $T$ is a computable real with $0<T<1$.
For every r.e.~real $\beta$,
if $\beta$ is $T$-convergent then $\beta$ is strictly $T$-compressible.
\end{theorem}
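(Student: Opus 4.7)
The plan is to combine Theorem~\ref{CHS} with the equivalence (i)~$\Leftrightarrow$~(iii) in Theorem~\ref{partial randomness}. Theorem~\ref{CHS} supplies a single r.e.\ real $\alpha\in(0,1)$ which is simultaneously weakly Chaitin $T$-random and strictly $T$-compressible; Theorem~\ref{partial randomness} then allows any such $\alpha$ to act as a complexity upper bound for \emph{every} $T$-convergent r.e.\ real.

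More concretely, first I would invoke Theorem~\ref{CHS} to fix an r.e.\ real $\alpha\in(0,1)$ and a constant $d\in\N$ with $\abs{H(\rest{\alpha}{n})-Tn}\le d$ for all $n\in\N^+$. In particular $\alpha$ is weakly Chaitin $T$-random, so by the implication (i)~$\Rightarrow$~(iii) of Theorem~\ref{partial randomness}, for every $T$-convergent r.e.\ real $\beta$ there is some $d'\in\N$ such that $H(\rest{\beta}{n})\le H(\rest{\alpha}{n})+d'$ for all $n\in\N^+$. Combining these two inequalities gives
\begin{equation*}
  H(\rest{\beta}{n})\le H(\rest{\alpha}{n})+d'\le Tn+d+d'
\end{equation*}
for all $n\in\N^+$, which is precisely the definition of $\beta$ being strictly $T$-compressible with constant $d+d'$.

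There is no real obstacle here: the heavy lifting has already been done in Theorem~\ref{CHS} (existence of a concrete $\alpha$ achieving complexity exactly $Tn$ up to a constant) and in Theorem~\ref{partial randomness} (the fact that weak Chaitin $T$-randomness of an r.e.\ real $\alpha$ forces $H(\rest{\alpha}{n})$ to dominate the prefix complexity of every $T$-convergent r.e.\ real up to an additive constant). The theorem follows by transitivity of ``$\le +O(1)$.'' The only point to be careful about is that Theorem~\ref{partial randomness} requires $T\in(0,1]$ and computable, which is granted by hypothesis, and that the chosen $\alpha$ is itself an r.e.\ real (so that it is a legitimate witness for the $\Omega(T)$-like property), which is explicit in the statement of Theorem~\ref{CHS}.
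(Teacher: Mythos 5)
Your proposal is correct and is essentially identical to the paper's own proof: both invoke Theorem~\ref{CHS} to obtain a weakly Chaitin $T$-random r.e.\ real $\alpha$ with $H(\rest{\alpha}{n})\le Tn+O(1)$, then apply the implication (i)~$\Rightarrow$~(iii) of Theorem~\ref{partial randomness} and chain the two inequalities. No gaps.
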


\begin{proof}
Suppose that $T$ is a computable real with $0<T<1$.
It follows from Theorem~\ref{CHS} that there exists an r.e.~real $\alpha$ such that $\alpha$ is weakly Chaitin $T$-random and
\begin{equation}\label{HbnlTno}
  H(\rest{\alpha}{n})\le Tn+O(1)
\end{equation}
for all $n\in\N^+$.
Since $\alpha$ is weakly Chaitin $T$-random, using the implication (i) $\Rightarrow$ (iii) of Theorem~\ref{partial randomness} we see that,
for every $T$-convergent r.e.~real $\beta$, there exists $d\in\N$ such that, for all $n\in\N^+$, $H(\rest{\beta}{n})\le H(\rest{\alpha}{n})+d$.
Thus, for each $T$-convergent r.e.~real $\beta$, using \eqref{HbnlTno} we see that $H(\rest{\beta}{n})\le Tn+O(1)$ for all $n\in\N^+$,
which implies that $\beta$ is strictly $T$-compressible.
\end{proof}

Using Theorem 7 of Tadaki~\cite{T09MFCS}, Theorem~\ref{icalp10-uc10}, and Theorem~\ref{pomgd} (i),
we can prove the following theorem.

\begin{theorem}\label{ZVTwCTr-strictTcb}
Suppose that $T$ is a computable real with $0<T<1$. Let $V$ be an optimal prefix-free machine.
Then there exists $d\in\N$ such that,
for all $n\in\N^+$,
$\abs{H(\rest{Z_V(T)}{n})-Tn}\le d$.
\end{theorem}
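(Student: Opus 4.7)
The plan is to prove the two-sided bound by combining the weak Chaitin $T$-randomness of $Z_V(T)$ (which gives the lower bound) with strict $T$-compressibility (which gives the upper bound), and to get the latter by showing $Z_V(T)$ is a $T$-convergent r.e.\ real and then invoking Theorem~\ref{icalp10-uc10}.

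For the lower bound, I would invoke Theorem~\ref{pomgd}(i) directly: since $T$ is a computable real in $(0,1)$, $Z_V(T)$ is weakly Chaitin $T$-random, so there exists $c\in\N$ such that $Tn-c\le H(\rest{Z_V(T)}{n})$ for all $n\in\N^+$. This half requires no new argument.

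For the upper bound, I would first argue that $Z_V(T)$ is a $T$-convergent r.e.\ real. The natural witness is the partial sum sequence: enumerate $\Dom V$ recursively as $p_1,p_2,p_3,\dotsc$ and set $a_n=\sum_{k=1}^{n}2^{-\abs{p_k}/T}$. Then $\{a_n\}$ is a computable increasing sequence of rationals converging to $Z_V(T)$, and
\begin{equation*}
\sum_{n=0}^{\infty}(a_{n+1}-a_n)^T=\sum_{n=1}^{\infty}2^{-\abs{p_n}}=\Omega_V<\infty,
\end{equation*}
so the sequence is $T$-convergent. This is precisely what Theorem~7 of~\cite{T09MFCS} provides (as referenced in the paragraph introducing the theorem). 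Once $T$-convergence of $Z_V(T)$ is established, Theorem~\ref{icalp10-uc10} gives that $Z_V(T)$ is strictly $T$-compressible, i.e., there exists $d'\in\N$ such that $H(\rest{Z_V(T)}{n})\le Tn+d'$ for all $n\in\N^+$.

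Taking $d=\max\{c,d'\}$ combines the two bounds into $\abs{H(\rest{Z_V(T)}{n})-Tn}\le d$ for all $n\in\N^+$, as required. There is no real obstacle here: every ingredient has already been stated in the paper, and the only substantive verification is that the partial sum sequence witnesses $T$-convergence, which is an immediate computation using the Kraft inequality for the prefix-free set $\Dom V$.
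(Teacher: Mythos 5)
Your proof is correct and follows essentially the same route as the paper: the paper's proof likewise combines Theorem~\ref{pomgd}~(i) for the lower bound with Theorem~7 of \cite{T09MFCS} ($T$-convergence of $Z_V(T)$) and Theorem~\ref{icalp10-uc10} for the upper bound. Your extra verification that the partial-sum sequence witnesses $T$-convergence is a correct unpacking of the cited Theorem~7.
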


\begin{proof}
Suppose that $T$ is a computable real with $0<T<1$. Let $V$ be an optimal prefix-free machine.
By Theorem 7 of Tadaki~\cite{T09MFCS},
$Z_V(T)$ is a $T$-convergent r.e.~real.
It follows from Theorem~\ref{icalp10-uc10} that
$Z_V(T)$ is strictly $T$-compressible.
On the other hand,
by Theorem~\ref{pomgd} (i),
$Z_V(T)$ is weakly Chaitin $T$-random.
This completes the proof.
\end{proof}

Calude, et al.~\cite{CHS11}, in essence, showed the following result.
For completeness,
we include its proof.

\begin{theorem}[Calude, Hay, and Stephan \cite{CHS11}]\label{bounded-run}
If a real $\beta$ is weakly Chaitin $T$-random and strictly $T$-compressible,
then
there exists $d\ge 2$ such that a base-two expansion of $\beta$ has neither a run of $d$ consecutive zeros nor a run of $d$ consecutive ones.
\end{theorem}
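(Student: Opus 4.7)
The plan is to argue by contradiction: suppose that for every $d\ge 2$ the base-two expansion of $\beta$ contains either a run of $d$ consecutive zeros or a run of $d$ consecutive ones. Then there is an infinite set $D\subset\N^+$ and, for each $d\in D$, a position $k_d\in\N$ and a bit $b_d\in\{0,1\}$ such that $\rest{\beta}{k_d+d}$ equals the concatenation of $\rest{\beta}{k_d}$ with $b_d^d$. By passing to an infinite subset of $D$, I may assume the bit $b_d$ is the same constant $b$ for all $d\in D$.

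The key step is a short-description estimate: since $\rest{\beta}{k_d+d}$ is computable from the pair $(\rest{\beta}{k_d},d)$, standard properties of prefix-free complexity give
\begin{equation*}
  H(\rest{\beta}{k_d+d})\le H(\rest{\beta}{k_d})+H(d)+O(1)\le H(\rest{\beta}{k_d})+2\log_2 d+O(1),
\end{equation*}
using $H(d)\le 2\log_2 d+O(1)$ for all $d\in\N^+$, as in the proof of Theorem~\ref{one-way II}. Weak Chaitin $T$-randomness of $\beta$ yields $T(k_d+d)-O(1)\le H(\rest{\beta}{k_d+d})$, while strict $T$-compressibility yields $H(\rest{\beta}{k_d})\le Tk_d+O(1)$. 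Chaining these three inequalities cancels the $Tk_d$ terms and leaves
\begin{equation*}
  Td\le 2\log_2 d+O(1)
\end{equation*}
for every $d\in D$. Since $T>0$ (being implicit here; weak Chaitin $T$-randomness is trivial otherwise), letting $d\to\infty$ through $D$ produces a contradiction, completing the proof.

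There is essentially no hard step here; the proof is a one-shot incompressibility argument. The only mild care needed is in the bookkeeping: one must either pass to a subsequence where $b_d$ is constant, so that the bit $b$ can be absorbed into the $O(1)$ additive constant of the decoder, or alternatively pay a single extra bit to encode $b_d$, which likewise fits inside the $O(1)$. Once this is handled, the rest is a routine description-length calculation against the two-sided complexity bounds satisfied by $\beta$.
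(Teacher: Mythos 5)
Your proof is correct, but it takes a different route from the paper's. The paper does not negate the statement and pay for the run length $d$; instead it invokes Lemma~\ref{RS-CHS}~(ii) to get a fixed block size $c$ with $H(s0^c)\le H(s)+Tc-1$ (and likewise for $1^c$), so that appending each block of $c$ identical bits costs at least one bit less than the "budget" $Tc$. A run of length $ck_0$ then accumulates a deficit of $k_0$, i.e.\ $H(\rest{\beta}{n_0+ck_0})\le H(\rest{\beta}{n_0})+Tck_0-k_0$, and comparing this against the two-sided bound $\abs{H(\rest{\beta}{n})-Tn}\le d_0$ via the triangle inequality forces $k_0\le 2d_0$, so runs of length $c(2d_0+1)$ are impossible. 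Your one-shot argument applies the same underlying estimate $H(sb^d)\le H(s)+H(d)+O(1)$ directly to the whole run and beats the $O(\log d)$ cost with the linear gain $Td$; it is more self-contained (no appeal to Lemma~\ref{RS-CHS}, no iteration) and your handling of the bit $b_d$ (subsequence or one extra bit) is fine, as is the implicit use of $T>0$, which holds throughout Section~\ref{T-convergent re}. What the paper's version buys is an explicit bound on the admissible run length in terms of the constants $c$ and $d_0$, and economy, since Lemma~\ref{RS-CHS} is already needed elsewhere (for Theorem~\ref{DomVTn-arewCTr}); what yours buys is brevity and independence from that lemma. One cosmetic point: since under your contradiction hypothesis \emph{every} $d\ge 2$ admits a run, you could take $D$ to be all of $\{2,3,\dotsc\}$ from the start and only pass to an infinite subset when fixing the bit $b$.
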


\begin{proof}
Let $\beta$ be a real which is weakly Chaitin $T$-random and strictly $T$-compressible.
Then there exists $d_0\in\N$ such that, for every $n\in\N$,
\begin{equation}\label{Hbn-Tnld0}
  \abs{H(\rest{\beta}{n})-Tn}\le d_0.
\end{equation}
On the other hand, by Lemma~\ref{RS-CHS} (ii) we see that there exists $c\in\N^+$ such that, for every $s\in\X$, $H(s0^c)\le H(s)+Tc-1$ and $H(s1^c)\le H(s)+Tc-1$.
We choose a particular $k_0\in\N^+$ with $k_0>2d$.

Assume first that
a base-two expansion of $\beta$
has a run of $ck_0$ consecutive zeros.
Then $\rest{\beta}{n_0}0^{ck_0}=\rest{\beta}{n_0+ck_0}$ for some $n_0\in\N$.
Thus we have
$H(\rest{\beta}{n_0+ck_0})-T(n_0+ck_0)+k_0\le H(\rest{\beta}{n_0})-Tn_0$,
and therefore
$-\abs{H(\rest{\beta}{n_0+ck_0})-T(n_0+ck_0)}+k_0\le \abs{H(\rest{\beta}{n_0})-Tn_0}$
where we used the triangle inequality.
It follows from \eqref{Hbn-Tnld0} that $-d_0+k_0\le d_0$ and therefore $k_0\le 2d_0$.
This contradicts the fact that $k_0>2d$.
Hence,
a base-two expansion of $\beta$
does not have a run of $ck_0$ consecutive zeros.
In a similar manner we can show that
a base-two expansion of $\beta$
does not have a run of $ck_0$ consecutive ones, as well.
\end{proof}

\begin{theorem}\label{DomVTn-reT}
Suppose that $T$ is a computable real with $0<T<1$. Let $V$ be an optimal prefix-free machine.
For every r.e.~real $\beta$,
if $\beta$ is $T$-convergent and weakly Chaitin $T$-random,
then $\Pf(\beta)$ is reducible to $\Dom V$ in query size $\lfloor Tn\rfloor+O(1)$.
\end{theorem}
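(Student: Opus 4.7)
The plan is to reduce $\Pf(\beta)$ to $\Dom V$ by computing $\rest{\beta}{n}$ from a slightly longer prefix $\rest{\alpha}{m}$ of a distinguished r.e.~real $\alpha$ supplied by Theorem~\ref{DomVTn-arewCTr}, using the domination of $\beta$ by $\alpha$ together with the bounded-run structure of the base-two expansion of $\beta$.

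First, fix via Theorem~\ref{DomVTn-arewCTr} a weakly Chaitin $T$-random r.e.~real $\alpha\in(0,1)$ whose prefixes are reducible to $\Dom V$ in query size $\lfloor Tn\rfloor + O(1)$. Since $\alpha$ is r.e.~and weakly Chaitin $T$-random, the implication (i) $\Rightarrow$ (ii) of Theorem~\ref{partial randomness} gives that $\alpha$ is $\Omega(T)$-like; because $\beta$ is by hypothesis a $T$-convergent r.e.~real, $\alpha$ therefore dominates $\beta$. Accordingly there exist computable increasing rational sequences $\{a_k\}$ and $\{b_k\}$ with $a_k\to\alpha$ and $b_k\to\beta$, and a constant $c\in\N^+$ such that $c(\alpha-a_k)\ge \beta-b_k$ for all $k\in\N$. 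On the side of $\beta$, Theorem~\ref{icalp10-uc10} yields that $\beta$ is strictly $T$-compressible, so Theorem~\ref{bounded-run} produces $d\ge 2$ such that the (unique, since $\beta$ is irrational) base-two expansion of $\beta-\lfloor\beta\rfloor$ contains neither a run of $d$ consecutive zeros nor a run of $d$ consecutive ones.

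On input $s$ of length $n$, the oracle machine sets $m = n + d + \lceil \log_2 c\rceil$ and uses the given reduction for $\alpha$ to compute $\rest{\alpha}{m}$; all queries then have length at most $\lfloor Tm\rfloor + O(1) = \lfloor Tn\rfloor + O(1)$, since $Td$ and $T\lceil\log_2 c\rceil$ are constants absorbed into the $O(1)$. Let $p:=0.\rest{\alpha}{m}$, so that $p\le\alpha<p+2^{-m}$, with $\alpha>p$ strictly because $p$ is a dyadic rational while $\alpha$ is irrational. Enumerate $a_1,a_2,\dots$ and pick the first $k_0$ with $a_{k_0}\ge p$; such a $k_0$ exists since $a_k\to\alpha>p$. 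Then $\alpha-a_{k_0}<2^{-m}$, so by domination $0< \beta-b_{k_0}\le c\cdot 2^{-m}\le 2^{-(n+d)}$. The bounded-run property pins $\beta-\lfloor\beta\rfloor$ into the interval $[0.\rest{\beta}{n}+2^{-(n+d)},\,0.\rest{\beta}{n}+2^{-n}-2^{-(n+d)}]$, which combined with $0<\beta-b_{k_0}\le 2^{-(n+d)}$ forces $b_{k_0}-\lfloor\beta\rfloor$ into $[0.\rest{\beta}{n},\,0.\rest{\beta}{n}+2^{-n})$; hence the first $n$ bits of $b_{k_0}-\lfloor\beta\rfloor$ coincide with $\rest{\beta}{n}$, where $\lfloor\beta\rfloor$ is a hardwired constant. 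The machine accepts iff $s$ equals this string.

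The main obstacle is this final decoding step, where the approximation $b_{k_0}$ has to be converted into the exact $n$-bit prefix of $\beta$ despite the carries inherent in binary expansions; the bounded-run property supplies the margin $2^{-(n+d)}$ between $\beta-\lfloor\beta\rfloor$ and the nearest dyadic of denominator $2^n$, and the slack $d+\lceil\log_2 c\rceil$ added to $m$ is precisely what shrinks the domination error enough to fit strictly inside that margin.
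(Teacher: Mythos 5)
Your proposal is correct and follows essentially the same route as the paper's proof: obtain the distinguished $\alpha$ from Theorem~\ref{DomVTn-arewCTr}, use $\Omega(T)$-likeness to get domination of $\beta$ by $\alpha$, and use strict $T$-compressibility plus Theorem~\ref{bounded-run} to decode $\rest{\beta}{n}$ from a sufficiently good rational approximation $b_{k_0}$. The only (cosmetic) difference is in the last step, where you sandwich $b_{k_0}-\lfloor\beta\rfloor$ directly into the dyadic interval $[0.\rest{\beta}{n},\,0.\rest{\beta}{n}+2^{-n})$ using the run-length margin, whereas the paper bounds the dyadic-integer distance between $\rest{\beta}{n+c+2}$ and the truncation of $b_{k_0}$ and then invokes the bounded-run property; both arguments are sound.
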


\begin{proof}
Suppose that $T$ is a computable real with $0<T<1$. Let $V$ be an optimal prefix-free machine.
Then, by Theorem~\ref{DomVTn-arewCTr},
there exist an r.e.~real $\alpha\in(0,1)$ and $d_0\in\N$
such that $\alpha$ is weakly Chaitin $T$-random and
$\Pf(\alpha)$ is reducible to $\Dom V$ in query size $\lfloor Tn\rfloor+d_0$.
Since $\alpha$ is an r.e.~real which is weakly Chaitin $T$-random,
it follow from
the implication (i) $\Rightarrow$ (ii) of Theorem~\ref{partial randomness}
that $\alpha$ is $\Omega(T)$-like.

Now, for an arbitrary r.e.~real $\beta$,
assume that $\beta$ is $T$-convergent and weakly Chaitin $T$-random.
Then, by Theorem~\ref{icalp10-uc10},
$\beta$ is strictly $T$-compressible.
It follows from Theorem~\ref{bounded-run} that
there exists $c\ge 2$ such that
the base-two expansion of $\beta$
has neither a run of $c$ consecutive zeros nor a run of $c$ consecutive ones.
On the other hand,
since the r.e.~real $\alpha$ is weakly Chaitin $T$-random,
from the definition of $\Omega(T)$-likeness
we see that $\alpha$ dominates $\beta$.
Therefore,
there are computable, increasing sequences $\{a_k\}_{k\in\N}$ and $\{b_k\}_{k\in\N}$
of rationals and $d_1\in\N$
such that
$\lim_{k\to\infty} a_k =\alpha$ and $\lim_{k\to\infty} b_k =\beta$
and, for all $k\in\N$,
$\alpha-a_k\ge 2^{-d_1}(\beta-b_k)$ and $\lfloor\beta\rfloor=\lfloor b_k\rfloor$.
Let $d_2=d_1+c+2$.
Then, by the following procedure, we see that
$\Pf(\beta)$ is reducible to $\Dom V$ in query size $\lfloor T(n+d_2)\rfloor+d_0$.

Given $s\in\X$,
one first calculates $\rest{\alpha}{n+d_2}$
by putting the queries $t$ to the oracle $\Dom V$,
where $n=\abs{s}$.
This is possible since 
$\Pf(\alpha)$ is reducible to $\Dom V$ in query size $\lfloor Tn\rfloor+d_0$.
Note here that
all the queries can be of length at most $\lfloor T(n+d_2)\rfloor+d_0$.
One then find $k_0\in\N$ such that
$0.(\rest{\alpha}{n+d_2})<a_{k_0}$.
This is possible since
$0.(\rest{\alpha}{n+d_2})<\alpha$ and
$\lim_{k\to\infty}a_{k}=\alpha$.
It follows that
$2^{-(n+d_2)}>\alpha-0.(\rest{\alpha}{n+d_2})>\alpha-a_{k_0}\ge 2^{-d_1}(\beta-b_{k_0})$.
Thus, $0<\beta-b_{k_0}<2^{-(n+c+2)}$.
Let $t$ be the first $n+c+2$ bits of the base-two expansion of
the rational number $b_{k_0}-\lfloor b_{k_0}\rfloor$ with infinitely many zeros.
Then, $\abs{\,b_{k_0}-\lfloor b_{k_0}\rfloor-0.t\,}\le 2^{-(n+c+2)}$.
It follows from $\abs{\,\beta-\lfloor\beta\rfloor-0.(\rest{\beta}{n+c+2})\,}<2^{-(n+c+2)}$
that $\abs{\,0.(\rest{\beta}{n+c+2})-0.t_n\,}<3\cdot 2^{-(n+c+2)}<2^{-(n+c)}$.
Hence, $\abs{\,\rest{\beta}{n+c+2}-t\,}<2^{2}$,
where $\rest{\beta}{n+c+2}$ and $t$ in $\{0,1\}^{n+c+2}$ are regarded as a dyadic integer.
Thus,
$t$ is obtained by adding to $\rest{\beta}{n+c+2}$ or subtracting from $\rest{\beta}{n+c+2}$
a $2$ bits dyadic integer.
Since the base-two expansion of $\beta$
has neither a run of $c$ consecutive zeros nor a run of $c$ consecutive ones,
it can be checked that
the first $n$ bits of $t$ equals to $\rest{\beta}{n}$.
Thus, one accepts $s$ if $s$ is a prefix of $t$ and rejects otherwise.
Recall here that $\abs{s}=n$.
\end{proof}

\section{Bidirectionality}
\label{two-wayness}

In this section we show the bidirectionality between $Z_U(T)$ and $\Dom U$ with a computable real $T\in(0,1)$ in a general setting.
Theorems~\ref{main1} and \ref{main2} below are two of the main results of this paper.

\begin{theorem}[
elaboration of $Z_U(T)\le_{wtt}\Dom U$]\label{main1}
Suppose that $T$ is a computable real with $0<T<1$.
Let $V$ and $W$ be optimal prefix-free machines, and
let $f$ be an order function.
Then the following two conditions are equivalent:
\begin{enumerate}
  \item $\Pf(Z_V(T))$ is reducible to $\Dom W$ in query size $f(n)+O(1)$.
  \item
    $Tn\le f(n)+O(1)$.
    \qed
\end{enumerate}
\end{theorem}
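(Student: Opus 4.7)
The plan is to prove the two implications separately, leaning on the tools developed in Sections~\ref{one-wayness} and \ref{T-convergent re}. For (ii) $\Rightarrow$ (i), note that $Z_V(T)$ is an r.e.~real which is weakly Chaitin $T$-random by Theorem~\ref{pomgd} (i) and $T$-convergent by Theorem~7 of Tadaki~\cite{T09MFCS}. Thus Theorem~\ref{DomVTn-reT} yields that $\Pf(Z_V(T))$ is reducible to $\Dom V$ in query size $\lfloor Tn\rfloor+O(1)$. Composing this with the reduction of $\Dom V$ to $\Dom W$ in query size $n+O(1)$ supplied by Theorem~\ref{op-p}, and invoking Proposition~\ref{observations} (i), one obtains a reduction of $\Pf(Z_V(T))$ to $\Dom W$ in query size $\lfloor Tn\rfloor+O(1)$. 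Assuming (ii), Proposition~\ref{observations} (ii) then upgrades this to query size $f(n)+O(1)$.

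The interesting direction is (i) $\Rightarrow$ (ii), and I would adapt the argument used in the proof of Theorem~\ref{I-random}, replacing its use of weak Chaitin randomness by the weak Chaitin $T$-randomness of $Z_V(T)$. Suppose $\Pf(Z_V(T))$ is reducible to $\Dom W$ in query size $f(n)+c$, and fix a deterministic Turing machine $M$ computing $W$ together with a choice of $p_n\in I_M^n$ for each $n\ge L_M$. Exactly as in the proof of Theorem~\ref{I-random}, from $(n,p_{f(n)+c})$ one can reconstruct $\rest{\Dom W}{f(n)+c}$ by simulating $M$ up to time $T_M(p_{f(n)+c})$ and then compute $\rest{Z_V(T)}{n}$ via the assumed reduction, so by optimality of $U$,
\[
  H(\rest{Z_V(T)}{n})\le H(n,p_{f(n)+c})+O(1)
\]
for every $n$ with $f(n)+c\ge L_M$.

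The next step is to specialize to the jump points of $f$. Let $h\colon\N\to\N$ be the increasing total recursive enumeration of $S_f=\{n\mid f(n)<f(n+1)\}$; then $f$ is constant with value $f(h(k+1))$ on $(h(k),h(k+1)]$, so $k\mapsto f(h(k))+c$ is a strictly increasing total recursive function. Optimality of $U$ therefore yields $H(h(k),p_{f(h(k))+c})\le H(f(h(k))+c,p_{f(h(k))+c})+O(1)$, and Theorem~\ref{time} bounds the right-hand side by $f(h(k))+O(1)$. Combined with the previous display, $H(\rest{Z_V(T)}{h(k)})\le f(h(k))+O(1)$ for all sufficiently large $k$, and since $Z_V(T)$ is weakly Chaitin $T$-random this gives $Th(k)\le f(h(k))+O(1)$.

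To spread the estimate from the jump points to arbitrary $n$, pick $k$ with $h(k-1)<n\le h(k)$; the constancy of $f$ on $(h(k-1),h(k)]$ and the positivity of $T$ then yield
\[
  Tn\le Th(k)\le f(h(k))+O(1)=f(n)+O(1),
\]
as desired, while the finitely many $n\le h(0)$ are absorbed into the additive constant. I expect the main obstacle to be this detour through the enumeration $h$ of $S_f$: Theorem~\ref{time} forces the first argument of the Kolmogorov complexity to come from a strictly increasing recursive function, whereas $f$ is only non-decreasing, and it is exactly the strict positivity of $T$ that allows one to carry the estimate from the jump points back to all $n$.
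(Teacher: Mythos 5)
Your proposal is correct and follows essentially the same route as the paper: (ii) $\Rightarrow$ (i) via Theorem~\ref{pomgd}~(i), the $T$-convergence of $Z_V(T)$, Theorem~\ref{DomVTn-reT} and Proposition~\ref{observations}, and (i) $\Rightarrow$ (ii) via the combination of the reduction, $p_n\in I_M^{f(n)}$, Theorem~\ref{time}, and the weak Chaitin $T$-randomness of $Z_V(T)$, exactly the ingredients of the paper's Theorem~\ref{I_T-random}. The only cosmetic difference is that the paper argues (i) $\Rightarrow$ (ii) by contradiction along a subsequence $g(k)$ on which $Tg(k)-f(g(k))$ increases, whereas you argue directly along the jump points $h(k)$ of $f$ and then propagate the bound to all $n$ using the monotonicity of $f$ and $T>0$; both devices serve the same purpose of making the first argument of $H(\cdot,\cdot)$ recoverable from $f(\cdot)$.
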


\begin{theorem}[
elaboration of $\Dom U\le_{wtt}Z_U(T)$]\label{main2}
Suppose that $T$ is a computable real with $0<T\le 1$.
Let $V$ and $W$ be optimal prefix-free machines, and
let $f$ be an order function.
Then the following two conditions are equivalent:
\begin{enumerate}
  \item $\Dom W$ is reducible to $\Pf(Z_V(T))$ in query size $f(n)+O(1)$.
  \item
    $n/T\le f(n)+O(1)$.
    \qed
\end{enumerate}
\end{theorem}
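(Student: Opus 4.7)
The plan is to prove the two implications separately. Since $Z_V(1)=\Omega_V$ and the condition $n/T\le f(n)+O(1)$ collapses to $n\le f(n)+O(1)$ at $T=1$, the case $T=1$ is precisely Theorem~\ref{one-way II} and can be cited directly; thus the real work lies in the subcase $0<T<1$.

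For (ii) $\Rightarrow$ (i) I would run the familiar strategy of decoding $\Dom V$ from an initial segment of the partition function. Given the first $m$ bits of $Z_V(T)$, one enumerates $\Dom V$ from below, updates the running sum $\sum 2^{-\abs{p}/T}$, and stops as soon as this partial sum strictly exceeds $0.(\rest{Z_V(T)}{m})$; since $Z_V(T)$ is irrational (being weakly Chaitin $T$-random for $T>0$), the stopping time is finite, and at that moment the contribution of the still-unenumerated halting programs is strictly less than $2^{-m}$, so every halting $p$ with $\abs{p}\le mT$ has already appeared. This yields $\rest{\Dom V}{\lfloor mT\rfloor}$ effectively from $\rest{Z_V(T)}{m}$. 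Composing this with the reduction of $\Dom W$ to $\Dom V$ of query size $n+d$ supplied by Theorem~\ref{op-p}, via Proposition~\ref{observations}(i), and choosing $m:=\lceil(n+d)/T\rceil+O(1)$, which is at most $f(n)+O(1)$ by (ii), gives the desired reduction.

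For (i) $\Rightarrow$ (ii) I would mimic the scheme of the proof of Theorem~\ref{one-way II}. The hypothesized reduction, together with the standard diagonalization (given $\rest{\Dom W}{n}$, output the first string not in $\{W(p)\mid p\in\rest{\Dom W}{n}\}$), yields a partial recursive $\Psi$ with $n<H_W(\Psi(n,\rest{Z_V(T)}{f(n)+c}))$; by mutual optimality of $U$ and $W$ this becomes $n<H(n,\rest{Z_V(T)}{f(n)+c})+O(1)$. Assuming for contradiction that $n/T-f(n)$ is unbounded, and using that $f$ is an order function, I would construct a total recursive $g\colon\N\to\N$ so that both $f(g(k))$ and $g(k)/T-f(g(k))$ are strictly increasing. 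Setting $m(k):=f(g(k))+c$, the map $m$ is increasing, hence an injective total recursive function, so $g(k)$ is recoverable from the length of $\rest{Z_V(T)}{m(k)}$ alone, which yields $H(g(k),\rest{Z_V(T)}{m(k)})\le H(\rest{Z_V(T)}{m(k)})+O(1)$.

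The main obstacle, and the point where the coefficient $1/T$ in (ii) is made quantitatively sharp, is the need for the \emph{strict} $T$-compressibility bound $H(\rest{Z_V(T)}{n})\le Tn+O(1)$ rather than merely $Tn+o(n)$; this sharp estimate is exactly what Theorem~\ref{ZVTwCTr-strictTcb} provides for $0<T<1$, and it is unavailable from weak Chaitin $T$-randomness alone. Plugging it into the chain above yields $g(k)<Tm(k)+O(1)=Tf(g(k))+O(1)$, i.e.\ $g(k)/T-f(g(k))=O(1)$, which contradicts the unbounded growth built into $g$ and completes the argument.
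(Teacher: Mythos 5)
Your proof is correct, and the (i) $\Rightarrow$ (ii) half --- diagonalizing to obtain $n<H(n,\rest{Z_V(T)}{f(n)+c})+O(1)$, extracting a total recursive $g$ with $f(g(k))$ and $g(k)/T-f(g(k))$ increasing, recovering $g(k)$ from the length $m(k)=f(g(k))+c$, and closing the contradiction with the strict $T$-compressibility bound $H(\rest{Z_V(T)}{n})\le Tn+O(1)$ of Theorem~\ref{ZVTwCTr-strictTcb} --- is exactly the paper's argument, including the observation that the case $T=1$ is Theorem~\ref{one-way II}. The only divergence is in how (ii) $\Rightarrow$ (i) is packaged. The paper proves a self-contained lemma (Theorem~\ref{ZVThaltC}) reducing $\Dom F$ to $\Pf(Z_V(T))$ in query size $\lceil n/T\rceil+O(1)$ for an \emph{arbitrary} prefix-free machine $F$: it fixes an enumeration $p_0,p_1,\dotsc$ of $\Dom F$, uses optimality of $V$ to get $H_V(i)\le\abs{p_i}+Td$, and applies the tail estimate to the partial sums of $Z_V(T;i)=\sum_{V(p)=i}2^{-\abs{p}/T}$. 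You instead apply the same tail estimate to an enumeration of $\Dom V$ itself, decoding $\rest{\Dom V}{\lfloor mT\rfloor}$ directly from $\rest{Z_V(T)}{m}$, and then compose with the query-size-$(n+d)$ reduction of $\Dom W$ to $\Dom V$ supplied by Theorem~\ref{op-p}. Both routes rest on the identical inequality (a tail below $2^{-m}$ forces every not-yet-enumerated program to have length exceeding $mT$) and both land at $n/T+O(1)$, so the difference is organizational rather than substantive: yours reuses the $\Omega$-style decoding plus Theorem~\ref{op-p}, while the paper's lemma handles an arbitrary $F$ in one step. Your parenthetical appeals to the irrationality of $Z_V(T)$ and to the computability of $T$ (needed to effectively detect when a partial sum exceeds the dyadic rational $0.(\rest{Z_V(T)}{m})$) are precisely the points the paper also records, so nothing is missing.
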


Theorem~\ref{main1} and Theorem~\ref{main2} are proved
in Subsection~\ref{Proof_of_Theorem_main1} and Subsection~\ref{Proof_of_Theorem_main2} below, respectively.
%
Note that the function $Tn$ in the condition (ii) of Theorem~\ref{main1} and
the function $n/T$ in the condition (ii) of Theorem~\ref{main2} are
the inverse functions of each other.
This implies that the computations between $\Pf(Z_V(T))$ and $\Dom W$ are bidirectional
in the case where $T$ is a computable real with $0<T<1$.
The formal proof is as follows.

\begin{theorem}\label{two-wayness_for_T<1}
Suppose that $T$ is a computable real with $0<T<1$.
Let $V$ and $W$ be optimal prefix-free machines.
Then the computations between $\Pf(Z_V(T))$ and $\Dom W$ are bidirectional.
\end{theorem}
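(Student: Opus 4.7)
The plan is to unfold the definition of bidirectionality and exhibit, in each of the two directions, a concrete pair of witnessing order functions coming from Theorems~\ref{main1} and~\ref{main2}. Saying the computation from $A$ to $B$ is not unidirectional requires order functions $f$ and $g$ such that $B$ is reducible to $A$ in query size $f$, $A$ is reducible to $B$ in query size $g$, and $g(f(n))-n$ is bounded. Hence I need to produce such witnesses for both $(A,B)=(\Pf(Z_V(T)),\Dom W)$ and $(A,B)=(\Dom W,\Pf(Z_V(T)))$.

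The natural candidates are the mutually inverse-growth functions $f_T(n)=\lceil Tn\rceil$ and $g_T(n)=\lceil n/T\rceil$. Since $T$ is a computable real with $0<T<1$, both maps are non-decreasing total recursive functions $\N\to\N$ tending to infinity, hence order functions. The implication (ii)$\Rightarrow$(i) of Theorem~\ref{main1}, applied to $f_T$ via the trivial bound $Tn\le f_T(n)$, gives that $\Pf(Z_V(T))$ is reducible to $\Dom W$ in query size $f_T(n)+O(1)$. Analogously, Theorem~\ref{main2} applied to $g_T$ gives that $\Dom W$ is reducible to $\Pf(Z_V(T))$ in query size $g_T(n)+O(1)$.

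For the direction from $\Pf(Z_V(T))$ to $\Dom W$ I take $g_T$ as the query-size bound for $\Dom W$ reducible to $\Pf(Z_V(T))$ and $f_T$ as the bound for the reverse reduction, and then verify
\[
  f_T(g_T(n)) \;=\; \lceil T\lceil n/T\rceil\rceil \;\le\; \lceil n+T\rceil \;=\; n+1,
\]
so $f_T(g_T(n))-n\le 1$ is bounded. Swapping the roles of $f_T$ and $g_T$ handles the other direction, since
\[
  g_T(f_T(n)) \;=\; \bigl\lceil\,\lceil Tn\rceil/T\,\bigr\rceil \;\le\; \lceil n+1/T\rceil \;=\; n+\lceil 1/T\rceil
\]
is also bounded. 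These two pairs of witnesses show that neither direction is unidirectional, so the computations between $\Pf(Z_V(T))$ and $\Dom W$ are bidirectional.

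The only mildly delicate point I anticipate is keeping track that the $O(1)$ slack in the query-size bounds of Theorems~\ref{main1} and~\ref{main2} composes compatibly with the ceiling arithmetic above; no genuinely new ingredient is required, since at this stage the statement is essentially the formal observation that the asymptotic query-size thresholds $Tn$ and $n/T$ supplied by the two characterization theorems are reciprocal.
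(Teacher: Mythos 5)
Your proposal is correct and follows essentially the same route as the paper: both invoke the implications (ii) $\Rightarrow$ (i) of Theorems~\ref{main1} and~\ref{main2} with the reciprocal bounds $Tn$ and $n/T$ (the paper uses $\lfloor\cdot\rfloor$ plus the explicit additive constants, you use $\lceil\cdot\rceil$) and then check that the two compositions stay within an additive constant of the identity. The point you flag about absorbing the $O(1)$ slack is handled correctly by your arithmetic, so no gap remains.
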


\begin{proof}
Let $V$ and $W$ be optimal prefix-free machines.
It follows from the implication (ii) $\Rightarrow$ (i) of Theorem~\ref{main2}
that there exists $c\in\N$ for which
$\Dom W$ is reducible to $\Pf(Z_V(T))$ in query size $f$
with $f(n)=\lfloor n/T\rfloor + c$.
On the other hand,
it follows from the implication (ii) $\Rightarrow$ (i) of Theorem~\ref{main1}
that there exists $d\in\N$ for which
$\Pf(Z_V(T))$ is reducible to $\Dom W$ in query size $g$
with $g(n)=\lfloor Tn\rfloor + d$.
Since $T$ is computable, $f$ and $g$ are order functions.
For each $n\in\N$, we see that $g(f(n))\le Tf(n)+d\le n+Tc+d$.
Thus, the computation from $\Pf(\Omega_V)$ to $\Dom W$ is not unidirectional.
In a similar manner,
we see that
the computation from $\Dom W$ to $\Pf(\Omega_V)$ is not unidirectional.
This completes the proof.
\end{proof}

\subsection{The Proof of Theorem~\ref{main1}}
\label{Proof_of_Theorem_main1}

Let $T$ be a computable real with $0<T<1$, and let $V$ be an optimal prefix-free machine.
Then, by Theorem 7 of Tadaki~\cite{T09MFCS}, $Z_V(T)$ is a $T$-convergent r.e.~real.
Moreover, by Theorem~\ref{pomgd}~(i), $Z_V(T)$ is weakly Chaitin $T$-random.
Thus, the implication (ii) $\Rightarrow$ (i) of Theorem~\ref{main1} follows immediately from Theorem~\ref{DomVTn-reT} and Proposition~\ref{observations} (ii).

On the other hand, the implication (i) $\Rightarrow$ (ii) of Theorem~\ref{main1} follows immediately from Theorem~\ref{pomgd}~(i) and Theorem~\ref{I_T-random} below.
In order to prove Theorem~\ref{I_T-random}, we use Theorem~\ref{time}.

\begin{theorem}\label{I_T-random}
Suppose that $T$ is a computable real with $0<T\le 1$.
Let $\beta$ be a real which is weakly Chaitin $T$-random, and let $V$ be an optimal prefix-free machine.
For every order function $f$,
if $\Pf(\beta)$ is reducible to $\Dom V$ in query size $f$
then $Tn\le f(n)+O(1)$.
\end{theorem}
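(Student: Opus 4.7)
My plan is to mirror the proof of Theorem~\ref{I-random}, using the reduction together with Theorem~\ref{time} to get an upper bound on $H(\rest{\beta}{n})$ in terms of $f(n)$, and then to combine this with the weak Chaitin $T$-randomness of $\beta$. The key simplification compared with Theorem~\ref{I-random} is that instead of needing to control a series (via the Ample Excess Lemma), we only need a pointwise bound, since weak Chaitin $T$-randomness is itself pointwise.

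First I would fix a deterministic Turing machine $N$ computing $V$, and for each $n$ with $f(n) \ge L_N$ choose a witness $p_{f(n)} \in I_N^{f(n)}$. Given $(n, p_{f(n)})$, one can recover $\rest{\Dom V}{f(n)}$ by running $N$ on all inputs of length at most $f(n)$ up to the time step $T_N(p_{f(n)}) = T_N^{f(n)}$. Combining this with the assumed oracle machine witnessing that $\Pf(\beta)$ is reducible to $\Dom V$ in query size $f$, one obtains a partial recursive function mapping $(n, p_{f(n)})$ to $\rest{\beta}{n}$, so by optimality of $U$,
\begin{equation*}
  H(\rest{\beta}{n}) \le H(n, p_{f(n)}) + O(1)
\end{equation*}
for all $n$ with $f(n) \ge L_N$.

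Next, to control $H(n, p_{f(n)})$ by $f(n) + O(1)$ I would restrict to a suitable subsequence, exactly as in the proof of Theorem~\ref{I-random}. Since $f$ is an order function, the set $S_f = \{n \in \N \mid f(n) < f(n+1)\}$ is infinite and recursive, so there is a total recursive increasing $h \colon \N \to \N$ with $h(\N) = S_f$. Then $k \mapsto f(h(k))$ is increasing and total recursive, hence from $(f(h(k)), s)$ one can recursively recover $(h(k), s)$, giving $H(h(k), s) \le H(f(h(k)), s) + O(1)$ for every $s \in \X$. Using Theorem~\ref{time} applied to $N$ and $V$ gives $H(f(h(k)), p_{f(h(k))}) = f(h(k)) + O(1)$ whenever $f(h(k)) \ge L_N$. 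Chaining the three estimates yields
\begin{equation*}
  H(\rest{\beta}{h(k)}) \le f(h(k)) + O(1)
\end{equation*}
for all sufficiently large $k$. Since $\beta$ is weakly Chaitin $T$-random, $T h(k) - O(1) \le H(\rest{\beta}{h(k)})$, so we obtain $T\,h(k) \le f(h(k)) + O(1)$ uniformly in $k$.

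Finally I would lift this subsequential estimate to all $n \in \N$. For arbitrary $n$, let $m = \max\{k \in \N \mid f(k) = f(n)\}$; this maximum exists and is finite because $f$ is non-decreasing and $\lim_{k\to\infty} f(k) = \infty$. By construction $m \ge n$, $f(m) = f(n)$, and $f(m) < f(m+1)$, so $m \in S_f$ and thus $m = h(k)$ for some $k$. Then
\begin{equation*}
  Tn \le Tm = T\,h(k) \le f(h(k)) + O(1) = f(m) + O(1) = f(n) + O(1),
\end{equation*}
which is the desired conclusion. The only mild point requiring attention is that the various $O(1)$ constants, and in particular the lower threshold on $f(n)$ coming from $f(n) \ge L_N$, must be absorbed into a single additive constant valid for all $n \in \N$; this is routine since only finitely many $n$ are excluded. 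I do not expect a substantive obstacle: the real content is the combination of Theorem~\ref{time} with the order-function trick $h$, both of which are already packaged in the proof of Theorem~\ref{I-random}.
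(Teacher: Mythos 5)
Your proposal is correct and follows essentially the same route as the paper: the reduction plus Theorem~\ref{time} yields $H(\rest{\beta}{n})\le H(n,p_{f(n)})+O(1)$, and a subsequence on which $f$ is injective lets you replace the index $n$ by $f(n)$ and conclude via weak Chaitin $T$-randomness. The only (harmless) difference is that you argue directly, using the set $S_f$ from the proof of Theorem~\ref{I-random} and then lifting from the subsequence to all $n$ by monotonicity of $f$, whereas the paper argues by contradiction along a subsequence on which $Tg(k)-f(g(k))$ increases; both variants are sound.
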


\begin{proof}
Suppose that $T$ is a computable real with $0<T\le 1$.
Let $\beta$ be a real which is weakly Chaitin $T$-random,
and let $V$ be an optimal prefix-free machine.
For an arbitrary order function $f$,
assume that $\Pf(\beta)$ is reducible to $\Dom V$ in query size $f$.
Let $M$ be a deterministic Turing machine which computes $V$.
For each $n$ with $f(n)\ge L_M$,
we choose a particular $p_n$ from $I_M^{f(n)}$.
Then, by the following procedure,
we see that
there exists a partial recursive function
$\Psi\colon \N\times\X\to\X$ such that,
for all $n$ with $f(n)\ge L_M$,
\begin{equation}\label{pnpn=n+fn}
  \Psi(n,p_n)=\rest{\beta}{n}.
\end{equation}

Given $(n,p_n)$ with $f(n)\ge L_M$,
one first calculates the finite set $\rest{\Dom V}{f(n)}$
by simulating the computation of $M$ with the input $q$
until at most the time step $T_M(p_n)$,
for each $q\in\X$ with $\abs{q}\le f(n)$.
This can be possible because $T_M(p_n)=T_M^{f(n)}$
for every $n$ with $f(n)\ge L_M$. 
One then calculates $\rest{\beta}{n}$
using $\rest{\Dom V}{f(n)}$ and outputs it.
This is possible since $\Pf(\beta)$ is reducible to $\Dom V$ in query size $f$.

It follows from \eqref{pnpn=n+fn}
that
\begin{equation}\label{HbnpflHnp}
  H(\rest{\beta}{n})\le H(n,p_n)+O(1)
\end{equation}
for all $n$ with $f(n)\ge L_M$.

Now, let us assume contrarily that the function $Tn-f(n)$ of $n\in\N$ is unbounded.
Recall that $f$ is an order function and $T$ is computable.
Hence it is easy to show that there exists a total recursive function $g\colon\N\to\N$ such that
the function $f(g(k))$ of $k$ is increasing and the function $Tg(k)-f(g(k))$ of $k$ is also increasing.
Since the function $f(g(k))$ of $k$ is injective, it is then easy to see that there exists a partial recursive function
$\Phi\colon\N\to\N$ such that
$\Phi(f(g(k)))=g(k)$ for all $k\in\N$.
Thus,
based on the optimality of $U$,
it is shown that
$H(g(k),s)\le H(f(g(k)),s)+O(1)$
for all $k\in\N$ and $s\in\X$.
Hence,
using \eqref{HbnpflHnp} and Theorem~\ref{time} we have
$H(\rest{\beta}{g(k)})\le H(f(g(k)),p_{g(k)})+O(1)\le f(g(k))+O(1)$
for all $k$ with $f(g(k))\ge L_M$.
Since $\beta$ is weakly Chaitin $T$-random,
we have
$Tg(k)\le H(\rest{\beta}{g(k)})+O(1)\le f(g(k))+O(1)$
for all $k$ with $f(g(k))\ge L_M$.
However, this contradicts the fact that
the function $Tg(k)-f(g(k))$ of $k$ is unbounded,
and the proof is completed.
\end{proof}

\subsection{The Proof of Theorem~\ref{main2}}
\label{Proof_of_Theorem_main2}

The implication (i) $\Rightarrow$ (ii) of Theorem~\ref{main2}
can be proved based on Theorem~\ref{ZVTwCTr-strictTcb} as follows.

\begin{proof}[Proof of (i) $\Rightarrow$ (ii) of Theorem~\ref{main2}]
In the case of $T=1$, the implication (i) $\Rightarrow$ (ii) of Theorem~\ref{main2} results in the implication (i) $\Rightarrow$ (ii) of Theorem~\ref{one-way II}.
Thus, we assume that $T$ is a computable real with $0<T<1$ in what follows.
Let $V$ and $W$ be optimal prefix-free machines, and let $f$ is an order function.
Suppose that there exists $c\in\N$ such that $\Dom W$ is reducible to $\Pf(Z_V(T))$ in query size $f(n)+c$.
Then, by considering the following procedure, we first see that $n<H(n,\rest{Z_V(T)}{f(n)+c})+O(1)$ for all $n\in\N$.

Given $n$ and $\rest{Z_V(T)}{f(n)+c}$, one first calculates the finite set $\rest{\Dom W}{n}$.
This is possible since $\Dom W$ is reducible to $\Pf(Z_V(T))$ in query size $f(n)+c$ and $f(k)\le f(n)$ for all $k\le n$.
Then, by calculating the set $\{\,W(p)\mid p\in\rest{\Dom W}{n}\}$ and picking any one finite binary string $s$ which is not in this set,
one can obtain $s\in\X$ such that $n<H_W(s)$.

Thus, there exists a partial recursive function $\Psi\colon\N\times\X\to\X$ such that, for all $n\in\N$, $n<H_W(\Psi(n,\rest{Z_V(T)}{f(n)+c}))$.
It follows
from the optimality of $W$
that
\begin{equation}\label{n/T-f(n)-lower-bound}
  n<H(n,\rest{Z_V(T)}{f(n)+c})+O(1)
\end{equation}
for all $n\in\N$.

Now, let us assume contrarily that the function $n/T-f(n)$ of $n\in\N$ is unbounded.
Recall that $f$ is an order function and $T$ is computable.
Hence it is easy to show that there exists a total recursive function $g\colon\N\to\N$ such that
the function $f(g(k))$ of $k$ is increasing and the function $g(k)/T-f(g(k))$ of $k$ is also increasing.
For clarity, we define a total recursive function $m\colon \N\to\N$ by $m(k)=f(g(k))+c$.
Since $m$ is
injective,
it is then easy to see that there exists a partial recursive function $\Phi\colon\N\to\N$ such that $\Phi(m(k))=g(k)$ for all $k\in\N$.
Therefore,
based on the optimality of $U$,
it is shown that
$H(g(k),\rest{Z_V(T)}{m(k)})
\le H(\rest{Z_V(T)}{m(k)})+O(1)$
for all $k\in\N$.
It follows from \eqref{n/T-f(n)-lower-bound} that
$g(k)<H(\rest{Z_V(T)}{m(k)})+O(1)$
for all $k\in\N$.
On the other hand,
since $T$ is a computable real with $0<T<1$,
it follows from Theorem~\ref{ZVTwCTr-strictTcb}
that $H(\rest{Z_V(T)}{n})\le Tn+O(1)$ for all $n\in\N$.
Therefore we have $g(k)<Tf(g(k))+O(1)$ for all $k\in\N$.
However, this contradicts the fact that
the function $g(k)/T-f(g(k))$ of $k$ is unbounded,
and the proof is completed.
\end{proof}

On the other hand,
the implication (ii) $\Rightarrow$ (i) of Theorem~\ref{main2}
follows immediately from
Theorem~\ref{ZVThaltC} below and
Proposition~\ref{observations} (ii).

\begin{theorem}\label{ZVThaltC}
Suppose that $T$ is a computable real with $0<T\le 1$.
Let $V$ be an optimal prefix-free machine,
and let $F$ be a prefix-free machine.
Then $\Dom F$ is reducible to $\Pf(Z_V(T))$ in query size $\lceil n/T\rceil+O(1)$.
\end{theorem}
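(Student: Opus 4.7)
The plan is to combine two reductions. First, I will reduce $\Dom V$ to $\Pf(Z_V(T))$ in query size $\lceil n/T\rceil+c$ for some constant $c$ depending only on $T$ and the ambient machines, and then compose with the reduction of $\Dom F$ to $\Dom V$ in query size $n+d$ supplied by Theorem~\ref{op-p}. By Proposition~\ref{observations}~(i), the composition is a reduction of $\Dom F$ to $\Pf(Z_V(T))$ in query size $\lceil(n+d)/T\rceil+c$, which is bounded by $\lceil n/T\rceil+O(1)$, as desired.

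For the reduction of $\Dom V$ to $\Pf(Z_V(T))$, set $m=\lceil(n+d)/T\rceil$. On input $x$ with $\abs{x}=n$, the oracle machine first recovers $\rest{Z_V(T)}{m}$ by a prefix descent: for each $k=1,\dots,m$ it queries both $\rest{Z_V(T)}{k-1}0$ and $\rest{Z_V(T)}{k-1}1$, and keeps the one lying in the oracle; since $\Pf(Z_V(T))$ contains exactly one string of each length, this correctly identifies $\rest{Z_V(T)}{k}$. Each query has length at most $m\le\lceil n/T\rceil+\lceil d/T\rceil$.

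Next, the machine enumerates $\Dom V$ by dovetailing a Turing machine that computes $V$. At each stage $s$ it produces a finite set $\Dom_s V$ and, since $T$ is computable, a rational lower bound $q_s\le \sum_{p\in\Dom_sV}2^{-\abs{p}/T}$ with $q_s\to Z_V(T)$ from below. It waits until the first stage $s^\ast$ at which $q_{s^\ast}>0.\rest{Z_V(T)}{m}$; such a stage exists because $Z_V(T)$ is weakly Chaitin $T$-random by Theorem~\ref{pomgd}~(i), hence irrational, so $0.\rest{Z_V(T)}{m}<Z_V(T)$ strictly. At stage $s^\ast$ we then have
\[
  Z_V(T)-\sum_{p\in\Dom_{s^\ast}V}2^{-\abs{p}/T}\;<\;2^{-m}\;\le\;2^{-(n+d)/T},
\]
so any program of length at most $n+d$ that entered $\Dom V$ after stage $s^\ast$ would contribute at least $2^{-(n+d)/T}$ to the tail, a contradiction. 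Therefore $\rest{\Dom V}{n+d}=\rest{\Dom_{s^\ast}V}{n+d}$, and feeding this finite set to the reduction from Theorem~\ref{op-p} decides whether $x\in\Dom F$.

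There is no deep obstacle here: the argument follows the standard ``use the r.e.~real as a mass certificate'' pattern, with $Z_V(T)$ replacing $\Omega_V$ and the weights $2^{-\abs{p}/T}$ replacing $2^{-\abs{p}}$. The only quantitative point to track is that solving the halting problem at length $n+d$ now requires pinning the tail of $Z_V(T)$ below $2^{-(n+d)/T}$, which needs $\lceil(n+d)/T\rceil$ bits of the base-two expansion; the additive constant $d$ from Theorem~\ref{op-p} is absorbed into the $O(1)$ precisely because $T$ is a fixed positive computable real.
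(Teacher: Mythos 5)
Your proof is correct, and it takes a genuinely different route from the paper's. You factor the reduction through $\Dom V$: first you establish $\Dom V\le\Pf(Z_V(T))$ in query size $\lceil n/T\rceil+O(1)$ by the classical ``mass certificate'' argument (any $p\in\Dom V$ with $\abs{p}\le n$ contributes $2^{-\abs{p}/T}\ge 2^{-n/T}$ to $Z_V(T)$, so once the enumerated mass exceeds the truncation $0.(\rest{Z_V(T)}{\lceil n/T\rceil})$ no such program can still be missing), and then you compose with Theorem~\ref{op-p} via Proposition~\ref{observations}~(i), the additive constant $d$ being absorbed into the $O(1)$ after division by the fixed positive computable $T$. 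The paper instead reduces $\Dom F$ to $\Pf(Z_V(T))$ in a single step: it enumerates $\Dom F=\{p_0,p_1,\dots\}$, builds a prefix-free machine $G$ with $G(p_i)=i$, invokes the optimality of $V$ to get $H_V(i)\le\abs{p_i}+Td$, and runs the tail argument on the grouped sums $Z_V(T;i)=\sum_{V(q)=i}2^{-\abs{q}/T}$ rather than on individual programs of $V$. Both arguments rest on the same quantitative fact and yield the same bound; yours is more modular, isolating the natural generalization of Proposition~\ref{DomVrdPfOVqsn} from $\Omega_V$ to $Z_V(T)$ and reusing Theorem~\ref{op-p}, whereas the paper's construction handles an arbitrary prefix-free $F$ without explicitly passing through $\Dom V$. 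One point you rightly make explicit, and which must not be skipped, is that the search for the stage $s^\ast$ terminates only because $0.(\rest{Z_V(T)}{m})<Z_V(T)$ holds strictly; your appeal to the irrationality of $Z_V(T)$, via its weak Chaitin $T$-randomness from Theorem~\ref{pomgd}~(i), settles this.
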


\begin{proof}
In the case where $\Dom F$ is a finite set,
the result is obvious.
Thus,
in what follows,
we assume that $\Dom F$ is an infinite set.

Let $p_0,p_1,p_2,p_3, \dotsc$ be
a particular recursive enumeration of $\Dom F$,
and let $G$ be a prefix-free machine such that
$\Dom G=\Dom F$ and $G(p_i)=i$ for all $i\in\N$. 
Recall here that we identify $\X$ with $\N$.
It is also easy to see that such a prefix-free machine $G$ exists.
Since $V$ is an optimal prefix-free machine,
from the definition of optimality of a prefix-free machine
there exists $d\in\N$ such that, for every $i\in\N$,
there exists $q\in\X$ for which $V(q)=i$ and $\abs{q}\le \abs{p_i}+Td$.
Thus,
$H_V(i)\le\abs{p_i}+Td$ for every $i\in\N$.
For each $s\in \X$,
we define $Z_V(T;s)$ as $\sum_{V(p)=s}2^{-\abs{p}/T}$.
Then, for each $i\in\N$,
\begin{equation}\label{imp}
  Z_V(T;i)\ge 2^{-H_V(i)/T}\ge 2^{-\abs{p_i}/T-d}.
\end{equation}
Then, by the following procedure,
we see that
$\Dom F$ is reducible to $\Pf(Z_V(T))$ in query size $\lceil n/T\rceil+d$.

Given $s\in\X$,
one first calculates $\rest{Z_V(T)}{\lceil n/T\rceil+d}$
by putting the queries $t$ to the oracle $\Pf(Z_V(T))$
for all $t\in\{0,1\}^{\lceil n/T\rceil+d}$, where $n=\abs{s}$.
Note here that
all the queries are of length $\lceil n/T\rceil+d$.
One then find $k_e\in\N$ such that
$\sum_{i=0}^{k_e} Z_V(T;i)>0.(\rest{Z_V(T)}{\lceil n/T\rceil+d})$.
This is possible because
$0.(\rest{Z_V(T)}{\lceil n/T\rceil+d})<Z_V(T)$,
$\lim_{k\to\infty}\sum_{i=0}^k Z_V(T;i)=Z_V(T)$,
and $T$ is a computable real.
It follows that
\begin{align*}
  \sum_{i=k_e+1}^{\infty} Z_V(T;i)
  &=Z_V(T)-\sum_{i=0}^{k_e} Z_V(T;i)
  <Z_V(T)-0.(\rest{Z_V(T)}{\lceil n/T\rceil+d})\\
  &<2^{-\lceil n/T\rceil -d}\le 2^{-n/T-d}.
\end{align*}
Therefore, by \eqref{imp},
\begin{equation*}
  \sum_{i=k_e+1}^{\infty} 2^{-\abs{p_i}/T}
  \le 2^{d}\sum_{i=k_e+1}^{\infty} Z_V(T;i)
  <2^{-n/T}.
\end{equation*}
It follows that, for every $i>k_e$,
$ 2^{-\abs{p_i}/T}<2^{-n/T}$
and therefore $n<\abs{p_i}$.
Hence,
$\rest{\Dom F}{n}
=\{\,p_i\mid i\le k_e\;\&\;\abs{p_i}\le n\,\}$.
Thus, one can calculate the finite set $\rest{\Dom F}{n}$.
Finally,
one accepts if $s\in\rest{\Dom F}{n}$ and rejects otherwise.
\end{proof}

\section{Concluding Remarks}
\label{conclusion}

Suppose that $T$ is a computable real with $0<T\le 1$.
Let $V$ be an optimal prefix-free machine.
It is worthwhile to clarify
the origin of the difference of the behavior of $Z_V(T)$ between $T=1$ and $T<1$ with respect to the notion of reducibility in query size $f$.
In the case of $T=1$,
the Ample Excess Lemma \cite{MY08} (i.e., Theorem~\ref{AEL}) plays a major role in establishing the unidirectionality of the computation from $\Omega_V$ to $\Dom V$.
However, in the case of $T<1$, this is not true because the weak Chaitin $T$-randomness of a real $\alpha$
does not necessarily imply that $\sum_{n=1}^{\infty} 2^{Tn-H(\reste{\alpha}{n})}<\infty$ \cite{RS05}.
On the other hand, in the case of $T<1$,
Lemma~\ref{RS-CHS} (i) plays a major role in establishing the bidirectionality of the computations between $Z_V(T)$ and $\Dom V$.
However, this does not hold for the case of $T=1$.

\section*{Acknowledgments}

This work was supported
by KAKENHI (20540134) and KAKENHI (23340020),
by CREST from Japan Science and Technology Agency,
and by the Ministry of Economy, Trade and Industry of Japan.



\appendix

\section{The proof of Theorem~\ref{time}}
\label{proof-time}

We here prove Theorem~\ref{time}.
For that purpose, we need Lemma~\ref{longer-running-time} below.
Let $V$ be an optimal prefix-free machine, and let $M$ be a deterministic Turing machine which computes $V$
throughout this Appendix~\ref{proof-time}.

\begin{lemma}\label{longer-running-time}
There exists $d\in\N$ such that,
for every $p\in\Dom V$,
there exists $q\in\Dom V$
for which $\abs{q}\le \abs{p}+d$ and
$T_M(q)>T_M(p)$.
\end{lemma}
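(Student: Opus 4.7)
The plan is to leverage the optimality of $V$ by constructing a prefix-free machine whose outputs are so long that any $V$-program encoding them via optimality must spend many steps just writing its output. The underlying principle is that a Turing machine producing output of length $L$ on its output tape needs at least $L$ steps, so forcing long output forces long running time.

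First I would define a prefix-free machine $F$ with $\Dom F = \Dom V$ by the following recipe: on input $p$, $F$ simulates $M$ on $p$; if the simulation halts at step $t = T_M(p)$ with output $v = V(p)$, then $F$ halts and outputs the padded string $v 0^{t+1}$. Then $F$ is a legitimate prefix-free machine, since its domain coincides with $\Dom V$ and so is prefix-free, and for every $p \in \Dom V$ one has $\abs{F(p)} = \abs{V(p)} + T_M(p) + 1 > T_M(p)$.

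Next I would invoke the optimality of $V$ for this specific $F$: there exists a constant $c \in \N$, depending only on $F$, such that for every $p \in \Dom F$ there is $q \in \Dom V$ with $V(q) = F(p)$ and $\abs{q} \le \abs{p} + c$. By the standard convention that $M$ writes at most one output symbol per step, $T_M(q) \ge \abs{V(q)} = \abs{F(p)} > T_M(p)$, and setting $d := c$ completes the proof. The only subtlety is the inequality $T_M(q) \ge \abs{V(q)}$, which invokes the standard Turing machine model implicit in the paper's setup; if one wished to weaken this assumption, the padding $0^{t+1}$ could be replaced by any larger recursive pad forcing the output length to exceed $T_M(p)$ under whatever polynomial-type relation between output length and running time is in force.
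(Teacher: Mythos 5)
Your construction is essentially the paper's: pad the output of a prefix-free machine $F$ with $\Dom F=\Dom V$ so that any $V$-program $q$ producing $F(p)$ must run longer than $T_M(p)$, then invoke optimality to bound $\abs{q}$. The gap is in the key inequality $T_M(q)\ge\abs{V(q)}$. The paper does not grant this: since the input $q$ already occupies $\abs{q}$ cells of the tapes in the initial configuration, the only bound it extracts from the one-symbol-per-step convention is $T_M(q)\ge\abs{V(q)}-\abs{q}$. With that weaker (and safer) bound, your padding $v0^{t+1}$ gives only $T_M(q)\ge\abs{V(p)}+T_M(p)+1-\abs{q}\ge\abs{V(p)}+T_M(p)+1-\abs{p}-c$, which fails to exceed $T_M(p)$ whenever $\abs{V(p)}\le\abs{p}+c-1$ — e.g.\ for long programs $p$ with $V(p)=\lambda$. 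So as written the argument depends on an assumption about the machine model (a separate, initially blank output tape) that the paper deliberately avoids.

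Your closing remark correctly identifies the repair (a larger pad), but glosses over two points the paper has to handle. First, the pad must dominate $\abs{q}\le\abs{p}+c$, where $c$ is the optimality constant \emph{for $F$ itself}, so it cannot be chosen with knowledge of $c$; the paper sidesteps this by padding to length $2\abs{p}+T_M(p)+1$, which exceeds $\abs{p}+d_1+T_M(p)\ge\abs{q}+T_M(p)$ once $\abs{p}\ge d_1$. Second, this leaves the finitely many $p$ with $\abs{p}<d_1$ uncovered; the paper disposes of them by noting that $T_M^n$ is unbounded (because $\Dom V$ is not recursive), picking a single $r_0\in\Dom V$ outrunning all of them, and enlarging $d$ to $\max\{d_1,\abs{r_0}\}$. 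Your proof needs both of these patches to go through under the paper's conventions.
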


\begin{proof}
Consider the prefix-free machine $F$ such that
(i) $\Dom F=\Dom V$ and
(ii) for every $p\in\Dom V$,
$F(p)=1^{2\abs{p}+T_M(p)+1}$.
It is easy to see that such a prefix-free machine $F$ exists.
Then,
since $V$ is an optimal prefix-free machine,
from the definition of an optimal prefix-free machine
there exists $d_1\in\N$ with the following property;
if $p\in\Dom F$, then there is $q$ for which
$V(q)=F(p)$ and $\abs{q}\le\abs{p}+d_1$.

Thus,
for each $p\in\Dom V$ with $\abs{p}\ge d_1$,
there is $q$ for which
$V(q)=F(p)$ and $\abs{q}\le\abs{p}+d_1$.
It follows that
\begin{equation}\label{output-input}
  \abs{V(q)}=2\abs{p}+T_M(p)+1>\abs{p}+d_1+T_M(p)\ge\abs{q}+T_M(p).
\end{equation}
Note that
exactly $\abs{q}$ cells on the tapes of $M$ have the symbols $0$ or $1$
in the initial configuration of $M$ with the input $q$,
while at least $\abs{V(q)}$ cells on the tape of $M$,
on which the output is put, have the symbols $0$ or $1$
in the resulting final configuration of $M$.
Since $M$ can write at most one $0$ or $1$ on the tape,
on which an output is put, every one step of its computation,
the running time $T_M(q)$ of $M$ on the input $q$
is bounded to the below by the difference $\abs{V(q)}-\abs{q}$.
Thus, by \eqref{output-input}, we have $T_M(q)>T_M(p)$.

On the other hand,
since $\Dom V$ is not a recursive set,
the function $T^M_n$ of $n\ge L_M$ is not bounded to the above.
Therefore, there exists $r_0\in\Dom V$ such that,
for every $p\in\Dom F$ with $\abs{p}<d_1$, $T_M(r_0)>T_M(p)$.
By setting $d_2=\abs{r_0}$
we then see that, for every $p\in\Dom F$ with $\abs{p}<d_1$,
$\abs{r_0}\le\abs{p}+d_2$.

Thus,
by setting $d=\max\{d_1,d_2\}$
we see that, for every $p\in\Dom V$,
there is $q\in\Dom V$ for which
$\abs{q}\le\abs{p}+d$ and $T_M(q)>T_M(p)$.
This completes the proof.
\end{proof}

Then the proof of Theorem~\ref{time} is given as follows.

\begin{proof}[Proof of Theorem~\ref{time}]
By considering the following procedure,
we first show that
$n\le H(n,p)+O(1)$ for all $(n,p)$ with $n\ge L_M$ and $p\in I_M^n$.

Given
$(n,p)$ with $n\ge L_M$ and $p\in I_M^n$,
one first calculates the finite set $\rest{\Dom V}{n}$
by simulating the computation of $M$ with the input $q$
until at most $T_M(p)$ steps,
for each $q\in\X$ with $\abs{q}\le n$.
Then,
by calculating the set $\{\,V(q)\mid q\in\rest{\Dom V}{n}\}$
and picking any one finite binary string $s$ which is not in this set,
one can obtain $s\in\X$ such that $n<H_V(s)$.

Hence, there exists a partial recursive function
$\Psi\colon \N^+\times\X\to \X$
such that,
for all $(n,p)$ with $n\ge L_M$ and $p\in I_M^n$,
$n<H_V(\Psi(n,p))$.
It follows
from the optimality of $V$ and $U$
that
$n<H(n,p)+O(1)$
for all $(n,p)$ with $n\ge L_M$ and $p\in I_M^n$.

We next show that $H(n,p)\le H(p)+O(1)$
for all $(n,p)$ with $n\ge L_M$ and $p\in I_M^n$.
From Lemma~\ref{longer-running-time} we first note that
there exists $d\in\N$ such that, for every $p\in\Dom V$,
there exists $q\in\Dom V$
for which $\abs{q}\le \abs{p}+d$ and $T_M(q)>T_M(p)$.
Then,
for each $(n,p)$ with $n\ge L_M$ and $p\in I_M^n$,
$\abs{p}\le n$ due to the definition of $I_M^n$, 
and also there exists $q\in\Dom V$
for which $\abs{q}\le \abs{p}+d$ and $T_M(q)>T_M(p)$.
Note here that
$T_M(q)>T_M^n$ due to the the definition of $I_M^n$ again,
and therefore $\abs{q}>n$ due to the definition of $T_M^n$.
Thus $\abs{p}\le n<\abs{p}+d$ and $d\ge 1$.
Hence,
given $p$ such that $n\ge L_M$ and $p\in I_M^n$,
one needs only $\lceil \log_2 d\rceil$ bits more to determine $n$,
since there are still only $d$ possibilities of $n$,
given such a string $p$.

Thus, there exists a partial recursive function
$\Phi\colon \X\times\X\to\N^+\times \X$ such that,
for every $(n,p)$ with $n\ge L_M$ and $p\in I_M^n$,
there exists $s\in\X$ with the properties that
$\abs{s}=\lceil \log_2 d\rceil$ and $\Phi(p,s)=(n,p)$.
It follows
that
$H(n,p)\le
H(p)+
\max\{H(s)\mid s\in\X\ \&\ \abs{s}=\lceil \log_2 d\rceil\}+O(1)$
for all $(n,p)$ with $n\ge L_M$ and $p\in I_M^n$.

Finally, we show that
$H(p)\le n+O(1)$ for all $(n,p)$ with $n\ge L_M$ and $p\in I_M^n$.
Let us consider the prefix-free machine $F$ such that
(i) $\Dom F=\Dom V$ and
(ii) for every $p\in\Dom V$, $F(p)=p$.
Obviously, such a prefix-free machine $F$ exists.
Then
we see that,
for every $p\in\Dom V$, $H(p)\le\abs{p}+O(1)$.
For each $(n,p)$ with $n\ge L_M$ and $p\in I_M^n$,
it follows from the definition of $I_M^n$
that
$p\in\Dom V$ and $\abs{p}\le n$,
and therefore we have $H(p)\le\abs{p}+O(1)\le n+O(1)$.
This completes the proof.
\end{proof}

\section{The proof of Lemma~\ref{RS-CHS}}
\label{proof-RS-CHS}

\begin{proof}[Proof of Lemma~\ref{RS-CHS}]
Let $T$ be a real with $T>0$, and let $V$ be an optimal prefix-free machine.

(i)
Chaitin \cite{C75} showed that
\begin{equation}\label{most_non-trivial}
  H(s,t)=H(s)+H(t/s)+O(1)
\end{equation}
for all $s,t\in\X$.
This
is Theorem~3.9.~(a) in \cite{C75}.
For the definition of $H(s/t)$, see Section~2 of Chaitin~\cite{C75}.
We here only use the property that,
for every $s\in\X$ and every $n\in\N$,
there exists $t\in\{0,1\}^n$ such that
\begin{equation}\label{simple_counting}
  H(t/s)\ge n.
\end{equation}
This is easily shown from the definition of $H(t/s)$
by counting the number of binary strings of length less than $n$.

On the other hand, it is easy to show that
\begin{equation}\label{Hstabst=Hst}
  H(st,\abs{t})=H(s,t)+O(1)
\end{equation}
for all $s,t\in\X$.
Since $H(st)+H(\abs{t})\ge H(st,\abs{t})-O(1)$ for all $s,t\in\X$,
it follows from
\eqref{Hstabst=Hst}, \eqref{most_non-trivial}, and \eqref{simple_counting}
that there exists $d\in\N$ such that,
for every $s\in\X$ and every $n\in\N$,
there exists $t\in\{0,1\}^n$ for which
$H(st)\ge H(s)+n-H(n)-d$.
Using
the optimality of $U$ and $V$,
we then see that there exists $d'\in\N$ such that,
for every $s\in\X$ and every $n\in\N$,
there exists $t\in\{0,1\}^n$ for which
\begin{equation}\label{HVst=HVs+n-Hn-d'}
  H_V(st)\ge H_V(s)+n-H(n)-d'.
\end{equation}

Now, suppose that $T<1$.
It follows
from the optimality of $U$
that
$H(n)\le 2\log_2 n+O(1)$ for all $n\in\N^+$.
Therefore there exists $c\in\N^+$ such that $(1-T)c-H(c)-d'\ge 0$.
Hence, by \eqref{HVst=HVs+n-Hn-d'} we see that
there exists $c\in\N^+$ such that, for every $s\in\X$,
there exists $t\in\{0,1\}^c$ for which $H_V(st)\ge H_V(s)+Tc$.

(ii)
Since $V$ is optimal,
it is easy to show that
there exists $d\in\N$ such that,
for every $s\in\X$ and every $n\in\N$,
\begin{equation}\label{HVs01nleHs+Hn+d}
\begin{split}
  H_V(s0^n)&\le H_V(s)+H(n)+d,\\
  H_V(s1^n)&\le H_V(s)+H(n)+d.
\end{split}
\end{equation}
Since $T>0$,
it follows
from the optimality of $U$
that
there exists $c\in\N^+$ such that $H(c)+d\le Tc-1$.
Hence, by \eqref{HVs01nleHs+Hn+d} we see that
there exists $c\in\N^+$ such that, for every $s\in\X$,
$H_V(s0^c)\le H_V(s)+Tc-1$ and $H_V(s1^c)\le H_V(s)+Tc-1$.
\end{proof}

\end{document}